\documentclass[letterpaper,12pt]{amsart}
\usepackage{graphicx}
\usepackage{color}

\newcommand{\ka}{K\"ahler }
\oddsidemargin=-0.5 true  mm
\evensidemargin=-0.5 true mm
\textwidth=6 true in
\textheight=204 true mm

\usepackage[abbrev]{amsrefs}

\vfuzz2pt 
\hfuzz2pt 
\newtheorem{thm}{Theorem}[section]
\newtheorem{cor}[thm]{Corollary}
\newtheorem{lem}[thm]{Lemma}
\newtheorem{prop}[thm]{Proposition}
\theoremstyle{definition}
\newtheorem{definition}[thm]{Definition}
\theoremstyle{remark}
\newtheorem{rem}[thm]{Remark}
\numberwithin{equation}{section}

\newcommand{\set}[1]{\left\{#1\right\}}

\newcommand{\PE}{{\mathbb{P}E^*}}
\newcommand{\OPE}{\mathcal{O}_{\mathbb{P}E^*}(1)}
\newcommand{\ddbar}{\sqrt{-1}\,\bar{\partial}\partial}
\newcommand{\Ric}{\textrm{Ric}}
\newcommand{\Lie}[1]{\mathfrak{#1}}
\newcommand{\pa}{\partial}

\begin{document}

\title[]{Extremal Metrics On Ruled Manifolds}%

\address{University of California, Irvine,  Department of Mathematics}%

 \author{Zhiqin Lu} \address{Department of
Mathematics, University of California,
Irvine, Irvine, CA 92697, USA} \email[Zhiqin Lu]{zlu@uci.edu}

 \author{Reza Seyyedali} \address{Department of Pure
Mathematics, University of Waterloo,
Waterloo, Ontario, N2L 3G1, Canada}
\email[Reza Seyyedali]{rseyyeda@uwaterloo.ca}

\thanks{
The first author  is partially supported by the DMS-12-06748 of the National Science Foundation of USA}
 \date{September 1, 2012}

 \subjclass[2000]{Primary: 53A30;
Secondary: 32C16}

\keywords{ruled manifold, extremal metric, stable vector bundle}

\date{November 14, 2012}
\begin{abstract}
In this paper, we consider a compact \ka manifold with extremal \ka metric and a Mumford stable holomorphic bundle over it. We proved that, if the holomorphic vector field defining the extremal \ka metric is liftable to the bundle and if the bundle is relatively stable with respect to  the action of automorphisms of the manifold, then there exist extremal \ka metrics on the projectivization of the dual vector bundle. 
\end{abstract}
\maketitle
\tableofcontents
\pagenumbering{arabic}

\newtheorem*{Mthm}{Main Theorem}
\newtheorem{Thm}{Theorem}[section]
\newtheorem{Prop}[Thm]{Proposition}
\newtheorem{Lem}[Thm]{Lemma}
\newtheorem{Cor}[Thm]{Corollary}
\newtheorem{Def}[Thm]{Definition}
\newtheorem{Guess}[Thm]{Conjecture}
\newtheorem{Ex}[Thm]{Example}
\newtheorem{Rmk}{Remark}
\newtheorem{Not}{Notation}
\def\thesection{\arabic{section}}
\renewcommand{\theThm} {\thesection.\arabic{Thm}}
\baselineskip=16pt

\section{Introduction}

Let $(M,\omega)$ be a K\"ahler manifold of dimension $m$ and $L$ be an
ample line bundle over  $M$ such that $\omega \in 2\pi c_{1}(M)$. Let $\pi:E \to M$
be a holomorphic vector bundle  of rank $r\geq 2$. This gives a
holomorphic fibre bundle $\mathbb{P}E^*$ over $M$ with fibre
$\mathbb{P}^{r-1}$. We denote the tautological line bundle on $\mathbb{P}E^*$ by $\mathcal{O}_{\mathbb{P}E^*}(-1)$ and its dual bundle
by $\mathcal{O}_{\mathbb{P}E^*}(1)$.  By the Kodaira embedding theorem, for $k \gg 0$, the line bundles $ \mathcal{O}_{\mathbb{P}E^*}(1) \otimes \pi^* L^{k}$ on $\PE$ are very ample.

In \cites{H1,H2}, Hong proved that if $E$ is  Mumford stable; $\omega$ has constant scalar curvature; and  $M$ does not admit any nontrivial  holomorphic vector fields, then $\PE$ admits cscK metric in the class of $\mathcal{O}_{\mathbb{P}E^*}(1) \otimes \pi^* L^{k}$ for $k \gg 0$. In \cite{H3}, he generalized the  result to the case that the base manifold has nontrivial automorphism group. He proved that if all Hamiltonian  holomorphic vector fields on $M$ can be lifted to holomorphic vector fields on $\PE$ and the corresponding Futaki invariants vanish, then $\PE$ admits cscK metrics in the class of $\mathcal{O}_{\mathbb{P}E^*}(1) \otimes \pi^* L^{k}$ for $k \gg 0$.  The result was  further generalized   by replacing  the liftiblity of holomorphic vector fields by a stability condition(cf. \cite{H4}). Hong considered the action of $\textup{Aut(M)}$ on the space of holomorphic structures on $E$ and showed that if $E$ is stable under this action, then there exist cscK metrics on $(\PE, \mathcal{O}_{\mathbb{P}E^*}(1) \otimes \pi^* L^{k})$ for $k \gg 0$. The  stability assumption is used to  perturb approximation  solutions to genuine cscK metrics.

In this article, we generalize Hong's result to the case that the base admits an extremal metric. Our main theorem is the following

\begin{thm}\label{thm1}

Let $(M,L)$ be a compact polarized manifold and $\omega_{\infty} \in c_{1}(L) $ be an extremal K\"ahler metric. Let  $X_{s}$ be the gradient vector field of the scalar curvature of $\omega_{\infty}$, i.e. $d S(\omega_{\infty})= \iota_{X_{s}}\omega_{\infty}$. Let $E$ be a Mumford stable holomorphic vector bundle over $M$. Suppose that the holomorphic vector field $X_{s}$ can be lifted to a holomorphic vector field on $\mathbb{P}E^*$. If $E$ is relatively stable under the action of $\textup{Aut}(M)$ in the sense of Definition \ref{def6}, then there exist extremal metrics on  $(\mathbb{P}E^*,\mathcal{O}_{\mathbb{P}E^*}(1)\otimes  \pi^*L^k)$ for $k \gg 0$.

\end{thm}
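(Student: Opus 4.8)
The plan is to adapt Hong's adiabatic-limit and gluing scheme for the constant scalar curvature case to the extremal setting, the new features being the nonconstant scalar curvature of $\omega_\infty$, the presence of the extremal vector field $X_s$, and the use of relative rather than absolute stability of $E$. \textbf{Step 1: approximate extremal metrics.} Since $E$ is Mumford stable and $L$ is ample, the Donaldson--Uhlenbeck--Yau theorem furnishes a Hermitian--Einstein metric $h$ on $E$ with respect to $\omega_\infty$; it induces a metric on $\OPE$ whose curvature form $\Omega$ restricts on each fiber $\mathbb{P}^{r-1}$ of $\PE$ to the Fubini--Study metric, which is cscK. For $k\gg 0$ the form $\omega^{(0)}_k=k\,\pi^*\omega_\infty+\Omega$ is \ka in $c_1(\OPE\otimes\pi^*L^k)$, and its scalar curvature has an asymptotic expansion in powers of $1/k$ whose leading term is the constant fiber scalar curvature and whose $O(1/k)$ term is a function on $M$ governed by $S(\omega_\infty)$ --- the Hermitian--Einstein equation is precisely what removes the fiberwise mean value of that term. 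One then corrects $\omega^{(0)}_k$ to $\omega_k=\omega^{(0)}_k+\sum_{j=1}^{N}k^{-j}(\pi^*\phi_j+\psi_j)$, solving at order $j$ a Lichnerowicz-type equation on $(M,\omega_\infty)$ for $\phi_j\in C^\infty(M)$ together with a fiberwise linear equation for $\psi_j$; because $\omega_\infty$ is extremal and $X_s$ lifts to $\PE$, each of these equations is solvable after one subtracts the $L^2$-projection of its obstruction onto holomorphy potentials, the residual scalar curvature being matched against the holomorphy potential of the lift of $X_s$ (with an $O(1/k)$ correction to the extremal vector field itself, which is determined intrinsically by the polarization and must be computed). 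Truncating after $N$ steps yields $\omega_k$ with
\[ \bar\partial\bigl(\nabla^{1,0}_{\omega_k}S(\omega_k)\bigr)=O(k^{-N}) \]
in suitable weighted Hölder norms; that is, $\omega_k$ is extremal to order $N$.

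\textbf{Step 2: linear theory.} The linearization of the extremal operator at $\omega_k$ is a fourth-order self-adjoint elliptic operator $\mathcal{L}_{\omega_k}$ of Lichnerowicz type whose kernel is the space of holomorphy potentials of the Hamiltonian holomorphic vector fields on $\PE$. Mumford stability makes $E$ simple, $H^0(M,\mathrm{End}\,E)=\mathbb{C}$, so $\PE$ has no nontrivial vertical holomorphic vector fields; every holomorphic vector field on $\PE$ descends to one on $M$ via $d\pi$, the resulting map is injective, and its image is exactly the space of liftable fields, so $\ker\mathcal{L}_{\omega_k}$ is modeled on a fixed finite-dimensional subspace of the holomorphic vector fields on $M$. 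Comparing $\mathcal{L}_{\omega_k}$ with the sum of the fiberwise Lichnerowicz operator (which has a uniform spectral gap above its kernel) and a rescaled copy of the Lichnerowicz operator of $(M,\omega_\infty)$, one obtains, orthogonally to the span of the eigenfunctions with uniformly small eigenvalues, a uniform bound $\|\mathcal{L}_{\omega_k}^{-1}\|\le Ck^{p}$ for a fixed power $p$ in the weighted norms of Step 1.

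\textbf{Step 3: perturbation and removal of the obstruction.} With Steps 1 and 2 in hand, a quantitative implicit function theorem (or Newton iteration) produces, for $k\gg 0$, a genuine solution of the extremal equation modulo the finite-dimensional obstruction spanned by the low-eigenvalue eigenfunctions of $\mathcal{L}_{\omega_k}$, which is isomorphic to a fixed space of holomorphy potentials on $\PE$. Killing this obstruction is the heart of the argument and is where Definition \ref{def6} enters: the obstruction is a Futaki/moment-map type quantity for the action of $\textup{Aut}(M)$ on the space of holomorphic structures on the $C^\infty$ bundle underlying $E$, taken relative to the torus generated by the lift of $X_s$, and relative stability of $E$ guarantees a zero of this quantity in the closure of the $\textup{Aut}(M)$-orbit of $\bar\partial_E$ at which the relevant second variation (a Kempf--Ness type pairing) is nondegenerate. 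Running Steps 1--2 in a family over a slice of that orbit and applying the implicit function theorem to the reduced finite-dimensional equation selects a holomorphic structure for which the obstruction vanishes, yielding the extremal metric on $(\PE,\OPE\otimes\pi^*L^k)$ for all large $k$.

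The step I expect to be the main obstacle is Step 3: one must make the family version of the linear theory uniform both in the orbit parameter and in $k$, and must show that the relative stability inequality is robust enough to beat the $O(k^{-N})$ error in the $k$-dependent weighted norms, so that the reduced finite-dimensional equation remains solvable --- this is exactly the point at which, in the cscK case, Hong's argument invokes the stability hypothesis and where the bulk of the technical effort lies.
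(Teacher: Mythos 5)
Your proposal follows essentially the same route as the paper: build approximate solutions order by order from the Hermitian--Einstein metric, solve the extremal equation by a contraction/implicit-function argument with the target Hamiltonian relaxed to the potentials of the centralizer of the torus, and then use relative stability --- phrased via Hong's moment map on the moduli of Hermitian--Einstein connections --- to perturb the holomorphic structure of $E$ so that the residual obstruction lands in $\bar{\Lie{t}}$. The only cosmetic difference is that you describe the last step as killing a finite-dimensional cokernel, whereas the paper first solves the relaxed equation exactly with $b\in\bar{\Lie{k}}$ and then pushes $b$ into $\bar{\Lie{t}}$; these are equivalent formulations of the same argument.
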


We  follow the  ideas of \cites{H4,Sz}. Let $G=\textup{Ham}(M,\omega_{\infty})$ be the group of Hamiltonian  isometries of $(M,\omega_{\infty})$ and $\Lie { g}$ be its Lie algebra. Let $G_{E}$ be the subgroup of all Hamiltonian isometries of $(M,\omega_{\infty})$ that can be lifted to  automorphisms of $\PE$. Let $\Lie{g}_{E} $ be the  Lie algebra of $G_{E}$, i.e.,  space of all Hamiltonian  holomorphic vector fields $X$ on $M$ that are liftable to  holomorphic vector fields $\tilde{X}$ on $\PE$. Fix $T\subseteq G_{E}$  a maximal torus and $K \subseteq G$  the subgroup of all elements in $G$ that commute with $T$. Let $\Lie{t}$ and $\Lie{k}$  be the Lie algebras of $T$ and $K$ respectively. We denote the space of all Hamiltonians whose gradient vector fields are in $\Lie{t}$ and $\Lie{k}$  by $\bar{\Lie{t}}$ and $\bar{\Lie{k}}$  respectively (including constant functions). Suppose that $E$ is Mumford stable. Then the Donaldson-Uhlenbeck-Yau Theorem implies that $E$ admits a Hermitian-Einstein metric $h$. The metric $h$ induces a hermitian metric $g=\hat{h}$ on $\OPE$. The restriction of the $(1,1)$-form $$\omega_{g}=i\bar{\partial}\partial \log g= i\bar{\partial}\partial \log \widehat{h}$$ on fibres are Fubini-Study metrics and therefore $\omega_{g}|_{{\rm Fiber}}$ is non-degenerate. Hence for $k \gg 0$, the  $(1,1)$-forms $\omega_{k}=\omega_{g}+k\omega_{\infty}$ define  K\"ahler metrics. Finding extremal metrics on $(\mathbb{P}E^*,\mathcal{O}_{\mathbb{P}E^*}(1)\otimes  L^k)$ is equivalent to finding $\phi \in \mathcal C^\infty(\PE)^T$ and $f \in \bar{\Lie{t}}$ such that
\begin{equation}\label{eq1}  S(\omega_{k}+ \ddbar \phi)+\frac{1}{2}\langle \nabla f, \nabla \phi \rangle=f,\end{equation} where $\nabla$ and $\langle\,\, , \,\,\rangle$ are taken with respect to $\omega_{k}$, and  $\mathcal C^\infty(\PE)^T$ is the space of smooth functions on $\PE$ that are invariant under the action of $T$. 
To see that $\omega_k+\ddbar\phi$ is an extremal metric, we assume that
\[
df=\iota(X)\omega_k
\]
for some holomorphic vector field $X$. We write $X=X_1+\bar X_1$ for holomorphic $(1,0)$ vector field $X_1$. Then a straightforward computation shows that
\[
\bar\pa S=\iota(X_1)(\omega_k+\ddbar\phi).
\]

Our  strategy is to replace  equation \eqref{eq1} with  the one  that is easier to solve and is relating  it to a finite dimensional GIT problem (cf. \cites{H4,Sz}). The first step is to find $\phi \in \mathcal C^\infty(\PE)^T$ and $b \in \bar{\Lie{k}}$ such that
\begin{equation} \label{eq2} S(\omega_{k}+ \ddbar \phi)+\frac 12\langle \nabla l_{k}(b), \nabla \phi \rangle=l_{k}(b),\end{equation}
where $\nabla$ and $\langle\,\, , \,\,\rangle$ are taken with respect to $\omega_{k}$ and $l_{k}(b)$ is a lift of $b$ to $\PE $ defined in Definition \ref{def4}. Note that if $b \in \bar{\Lie{t}}$, then $\omega_{k}+\ddbar \phi$ is an extremal metric. Allowing  $b$ to be in a slightly larger space makes it easier to solve the equation.  In order to solve equation \eqref{eq2}, 
we first construct K\"ahler forms $\omega_{k,p} $ in the class of $\omega_{k}$  for any positive integer $p$ and $k \gg 0$ as approximation solutions.
We then  apply contraction mapping theorem.

\begin{thm}\label{thm2}
Let $p \geq 6$ be an integer. Suppose that $X_{s} \in \Lie{t}$. Then for any $k \gg 0$, we can find $\phi \in \mathcal C^\infty(\PE)^T$ and $b \in \bar{\Lie{k}}$ such that
$$  S(\omega_{k,p}+ \ddbar \phi)+\frac{1}{2}\langle \nabla l_{k,p}(b), \nabla \phi \rangle=l_{k,p}(b).$$ Here $\nabla$ and $\langle\,\, , \,\,\rangle$ are taken with respect to $\omega_{k,p}$. Moreover $b $ has the following expansion:
$$b=r(r-1)+k^{-1}S(\omega)-k^{-2}\pi_{N}(\Sigma_{E})+O(k^{-3}),$$ where $\pi_{N}:\mathcal C^\infty(M) \to \ker(\mathcal{D}^*\mathcal{D})$ and
\begin{align*} &\Sigma_{E}=\frac{2}{r} \Lambda^2 (\textup{Ric}(\omega)\wedge \textup{tr}(iF_{h}))-\frac{2}{r(r+1)} \Lambda^2 (\textup{tr}(iF_{h})\wedge \textup{tr}(iF_{h}))\\&
+\frac{2}{r+1}\Lambda^2 \textup{tr}(iF_{h}\wedge iF_{h})-\mu S(\omega)+\frac{\textup{tr}(u_{X_{s}})}{r}.
\end{align*}
See Proposition \ref{prop3} and \ref{def5} for the definition of $u_{X_{s}}$ and $l_{k,p}$ respectively.

\end{thm}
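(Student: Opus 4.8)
The plan is to follow the strategy of \cites{H4,Sz} and split the argument into three stages. First, for each integer $p$ and all $k\gg 0$, construct an \emph{approximate} solution: a K\"ahler form $\omega_{k,p}$ in the class of $\omega_k$, together with $b_{k,p}\in\bar{\Lie{k}}$, for which $\|S(\omega_{k,p})-l_{k,p}(b_{k,p})\|=O(k^{-p-1})$ in suitable $k$-weighted H\"older norms. Second, linearize the operator $\mathcal G_k(\phi,b):=S(\omega_{k,p}+\ddbar\phi)+\tfrac12\langle\nabla l_{k,p}(b),\nabla\phi\rangle-l_{k,p}(b)$ at $(\phi,b)=(0,b_{k,p})$ and show that the linearization, viewed as a map from $T$-invariant functions orthogonal to a fixed finite-dimensional obstruction space, together with the parameter $b\in\bar{\Lie{k}}$, into $T$-invariant functions, is invertible with inverse bounded by $Ck^{N_0}$ for a fixed $N_0$. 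Third, solve $\mathcal G_k(\phi,b)=0$ by the contraction mapping theorem. All function spaces are $T$-invariant throughout, as in the statement.

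Stage one proceeds by induction on $p$, starting from $\omega_{k,0}:=\omega_k=\omega_g+k\omega_\infty$, whose restriction to each fiber is the Fubini-Study metric on $\mathbb{P}^{r-1}$ induced by the Hermitian-Einstein metric $h$ (which exists, since $E$ is Mumford stable, by the Donaldson-Uhlenbeck-Yau theorem). One first computes the adiabatic (``large base'') expansion of $S(\omega_k)$ as $k\to\infty$: the leading term is the constant fiber scalar curvature $r(r-1)$; the $O(k^{-1})$ term is $k^{-1}S(\omega_\infty)$, where the Hermitian-Einstein identity $\Lambda_{\omega_\infty}F_h=\mu\,\mathrm{Id}$ is used to reduce the order-$k^{-1}$ curvature contribution of $E$ to a multiple of the identity; and the $O(k^{-2})$ term is controlled by the expression $\Sigma_E$ of the statement. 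Given $\omega_{k,j}$ and $b_{k,j}\in\bar{\Lie{k}}$ with error $k^{-j-1}\eta_j+O(k^{-j-2})$, one improves the order by solving a leading-order linearized equation whose solution is sought as a sum of three pieces according to the splitting of functions on $\PE$ into fiberwise mean-zero functions, sections of the bundle over $M$ of fiberwise holomorphy potentials of $\mathrm{Aut}(\mathbb{P}^{r-1})$, and pullbacks from $M$: on the first piece the operator is the fiber Lichnerowicz operator, invertible; on the second it is, after rescaling, an elliptic operator on sections of $\mathrm{End}_0(E)$ (essentially the linearization of the Hermitian-Einstein equation), invertible because $E$ is Mumford stable, hence simple ($H^0(M,\mathrm{End}_0(E))=0$), so that this bundle carries no holomorphic section and $\PE$ carries no nontrivial automorphism covering the identity of $M$; on the third it is, after rescaling, the Lichnerowicz operator of the extremal metric $(M,\omega_\infty)$ with its transport term, invertible modulo $\ker(\mathcal D^*\mathcal D)$. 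The $\ker(\mathcal D^*\mathcal D)$-component of the third piece of $\eta_j$ --- extracted by $\pi_N$ --- cannot be removed by a bounded correction of $\omega_{k,j}$ and is absorbed into the next term of $b$; at order $k^{-1}$ this is all of $S(\omega_\infty)$, which lies in $\bar{\Lie{t}}\subseteq\bar{\Lie{k}}$ precisely because $\omega_\infty$ is extremal and $X_s\in\Lie{t}$, and at order $k^{-2}$ it is $-\pi_N(\Sigma_E)$. Iterating through order $p$ gives $b_{k,p}=r(r-1)+k^{-1}S(\omega_\infty)-k^{-2}\pi_N(\Sigma_E)+O(k^{-3})$.

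Stage two is the analytic heart of the proof. The linearization $D\mathcal G_k$ at $(0,b_{k,p})$ is, in the $\phi$-direction, the Lichnerowicz operator $-\mathcal D_k^*\mathcal D_k$ of $\omega_{k,p}$ plus a lower-order transport term built from $\nabla l_{k,p}(b_{k,p})$ (using that $\omega_{k,p}$ is nearly a solution), and in the $b$-direction, up to a term vanishing at $\phi=0$, the map $b\mapsto -l_{k,p}(b)$. One proves the uniform invertibility by treating $D\mathcal G_k$ as a perturbation of its block-diagonal part with respect to the same three-way splitting: the three diagonal blocks are uniformly invertible by the estimates of stage one (the fiberwise-potential block using that $E$ is simple, the base block using the extremal structure of $\omega_\infty$, whose kernel $\ker(\mathcal D^*\mathcal D)$ is exactly matched, on $T$-invariant functions, by the range of $b\mapsto l_{k,p}(b)$ on $\bar{\Lie{k}}$), while the off-diagonal coupling blocks are of strictly lower order in $k$ in the weighted norms and are absorbed by a Neumann series. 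This requires working in $k$-weighted H\"older norms adapted to the splitting $T\PE=T^{\mathrm{vert}}\oplus\pi^*TM$, in which vertical and horizontal derivatives are weighted by different powers of $k$, and it is precisely here that the adiabatic degeneration of $\omega_{k,p}$ as $k\to\infty$ must be controlled; I expect this uniform polynomial bound on $(D\mathcal G_k)^{-1}$, together with the precise identification of the asymptotic kernel so that nothing is lost when quotienting into $b$, to be the main obstacle.

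Stage three is routine once stages one and two are in place. Write $\mathcal G_k(\phi,b)=0$ as the fixed-point equation $(\phi,\,b-b_{k,p})=-(D\mathcal G_k)^{-1}\bigl(\mathcal G_k(0,b_{k,p})+Q_k(\phi,b)\bigr)$, where $Q_k$ collects the quadratic and higher terms and has a uniformly controlled Lipschitz constant on small balls. Using $\|\mathcal G_k(0,b_{k,p})\|=O(k^{-p-1})$ and $\|(D\mathcal G_k)^{-1}\|=O(k^{N_0})$, the right-hand side sends a ball of radius $\sim k^{-N_0-1}$ into itself and is a contraction there; the hypothesis $p\geq 6$ --- which ensures $p+1$ exceeds $N_0$ by more than $2$ for the $N_0$ produced in stage two --- makes this work for all $k\gg 0$, so a fixed point $(\phi,b)$ exists and solves the required equation. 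Finally $b-b_{k,p}=O(k^{-p-1+N_0})=o(k^{-2})$, so $b$ inherits the expansion $b=r(r-1)+k^{-1}S(\omega_\infty)-k^{-2}\pi_N(\Sigma_E)+O(k^{-3})$ of $b_{k,p}$, as claimed.
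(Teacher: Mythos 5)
Your proposal follows essentially the same route as the paper: build the approximate solutions $(\omega_{k,p},b_{k,p})$ with error $O(k^{-p-1})$ by induction on $p$, using the splitting of $\mathcal C^\infty(\PE)^T$ into pullbacks from $M$, sections of $W=\textup{End}_0(E)$, and fibrewise pieces killed by $\Delta_V(\Delta_V-r)$ (Proposition \ref{prop4} and Corollary \ref{cor4}), then obtain a polynomially bounded right inverse of the linearized extremal operator and close with the contraction mapping theorem, the expansion of $b$ being inherited from $b_{k,p}$ because the fixed point lies in a ball of radius $O(k^{-p+2})$. The only real difference is that the paper does not carry out your ``stage two'' in adiabatically weighted H\"older norms but instead imports it from Br\"onnle as Proposition \ref{prop6} --- a right inverse $P_k$ with $\norm{P_k}_{op}\leq Ck^{3}$ on $T$-invariant $L^2_4$ Sobolev spaces --- and this explicit exponent $N_0=3$ is precisely what makes $p\geq 6$ suffice, via $\norm{\mathcal Q(0,0)}\leq Ck^{3}\cdot k^{-p-1}$ and a Lipschitz constant of order $k^{5-p}$.
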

The metric $\omega_{k,p}+ \ddbar \phi$ would have been an extremal metric if $b $ were in  $ \bar{\Lie{t}}$. If not, we perturb the holomorphic structure on $E$ so that the Hamiltonian  $b$ lies in $\bar{\Lie{t}}$ after the perturbation. This can be done by applying implicit function theorem using the  stability assumption. 
In a recent paper, Br\"onnle~\cite{B}, using the similar method,   proved that if the base is cscK without holomorphic vector fields and the bundle is a direct sum of stable bundles with different slopes, then the projectivization admits extremal metrics.

The outline of the paper is as follows: In section $2$, we go over some basic facts and definitions. In section $3$, we compute an expansion for the scalar curvature of the metrics $\omega_{k}$. Section $4$ is devoted to the construction of K\"ahler metrics $\omega_{k,p}$. In Section $5$, we prove Theorem  \ref{thm2}. In the last section, we adopt Hong's moment map setting to our situation and prove the main theorem.

\section{Preliminaries}
Let $V$ be a hermitian vector space of dimension $r$. The
projective space $\mathbb{P}V^*$ can be identified with the space
of hyperplanes in $V$ via $f\in V^* \rightarrow
{{\rm ker}(f)}=V_{f}\subseteq V$.  There is a natural isomorphism between $V$ and
$H^{0}(\mathbb{P}V^*,\mathcal{O}_{\mathbb{P}V^*}(1))$ which sends
$v\in V$ to $\hat{v}\in
H^{0}(\mathbb{P}V^*,\mathcal{O}_{\mathbb{P}V^*}(1)) $ such that
for any $f \in V^*, \hat{v}(f)=f(v) $.

 \begin{definition}
For any hermitian inner product  $h$ on $V$, we use $\langle\,\cdot\,,\,\cdot\,\rangle_h$ to denote the hermitian inner product induced by $h$ and we use $\|\,\cdot\,\|_h$ to denote the norm with respect to $h$ on both $V$ and $V^*$.
The hermitian inner product $h$ induces a
hermitian metric on $\mathcal{O}_{\mathbb{P}V^*}(1)$, which can be explicitly represented as follows: for $v,w \in V$ and $f
\in V^*$ we define
\begin{equation}\label{12}
\langle\hat{v},\hat{w}\rangle_{\hat h}=
\frac{f(v)\overline{f(w)}}{\|f\|_h^{2}}.
\end{equation}
We denote the induced metric on
$\mathcal{O}_{\mathbb{P}V^*}(1)$ by $\widehat{h}$. \end{definition}

The following is a straightforward computation.
\begin{prop}
 For any $v,w \in V$ we have
$$\langle v,w\rangle_{h}=C_{r}^{-1}\int_{\mathbb{P}V^*} \langle\hat{v},\hat{w}\rangle_{\widehat{h}}
\frac{\omega_{\textrm{FS}}^{r-1}}{(r-1)!}$$ where $C_{r}$ is a
constant defined by
\begin{equation}\label{eq3}C_{r}=\int_{\mathbb{C}^{r-1}}
\frac{(\sqrt{-1})^{r-1}d\xi \wedge d\overline{\xi}}{(1+\sum_{j=1}^{r-1}
|\xi_{j}|^{2})^{r+1}}=\frac{(2\pi)^{r-1}}{r!},\end{equation} and $(\sqrt{-1})^{r-1}d\xi \wedge
d\overline{\xi}= (\sqrt{-1}\,d\xi_{1} \wedge
d\overline{\xi}_{1})\wedge \dots \wedge (\sqrt{-1}\,d\xi_{r-1}
\wedge d\overline{\xi}_{r-1}).$
\end{prop}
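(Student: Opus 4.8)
The plan is to reduce to an $h$-orthonormal basis, use $U(r)$-equivariance together with Schur's lemma to identify the integral with a constant multiple of $h$, and then pin down that constant by an explicit computation in an affine chart. Since both sides of the asserted identity are sesquilinear in $(v,w)$ --- for the right-hand side this is immediate from \eqref{12}, the integrand being $f(v)\overline{f(w)}/\norm{f}_h^2$ --- it suffices to verify it when $v=e_i$ and $w=e_j$, where $e_1,\dots,e_r$ is a fixed $h$-orthonormal basis of $V$. Let $e_1^*,\dots,e_r^*$ be the dual basis of $V^*$, which is orthonormal for the induced inner product; writing $f=\sum_j f_je_j^*$ one has $\hat e_i(f)=f_i$ and $\norm{f}_h^2=\sum_k\abs{f_k}^2$, so $\langle\hat e_i,\hat e_j\rangle_{\widehat{h}}=f_i\overline{f_j}/\sum_k\abs{f_k}^2$ is a well-defined function on $\mathbb{P}V^*$. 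I would first observe that
\[
I(v,w):=\int_{\mathbb{P}V^*}\langle\hat v,\hat w\rangle_{\widehat{h}}\,\frac{\omega_{\mathrm{FS}}^{r-1}}{(r-1)!}
\]
is a positive-definite Hermitian form on $V$ --- sesquilinearity and $\overline{I(v,w)}=I(w,v)$ are clear, and $I(v,v)>0$ for $v\neq0$ since $f(v)$ cannot vanish identically in $f$ --- and that it is $U(r)$-invariant: a unitary $g$ preserves $h$, hence acts by bundle isometries of $(\mathcal{O}_{\mathbb{P}V^*}(1),\widehat{h})$ covering an $\omega_{\mathrm{FS}}$-preserving action on $\mathbb{P}V^*$, and the substitution $f\mapsto g\cdot f$ in the integral yields $I(gv,gw)=I(v,w)$. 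Since $V$ is an irreducible complex representation of $U(r)$, Schur's lemma forces $I=c\,h$ for a single constant $c>0$.

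It then remains to compute $c=I(e_r,e_r)$, which I would carry out in the affine chart $\{f_r\neq0\}$ with coordinates $\xi_j=f_j/f_r$, $j=1,\dots,r-1$. There $\langle\hat e_r,\hat e_r\rangle_{\widehat{h}}=(1+\sum_j\abs{\xi_j}^2)^{-1}$, and a standard determinant computation for the Fubini--Study metric gives the volume form $\frac{\omega_{\mathrm{FS}}^{r-1}}{(r-1)!}=(\sqrt{-1})^{r-1}\,d\xi\wedge d\overline\xi/(1+\sum_j\abs{\xi_j}^2)^{r}$; multiplying the two turns $c$ into exactly the integral defining $C_r$ in \eqref{eq3}. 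Passing to polar coordinates on $\mathbb{C}^{r-1}\cong\mathbb{R}^{2(r-1)}$ reduces its radial part to the Beta value $\int_0^\infty t^{r-2}(1+t)^{-(r+1)}\,dt=B(r-1,2)=(r-2)!/r!$, which together with $\mathrm{vol}(S^{2r-3})=2\pi^{r-1}/(r-2)!$ yields $c=(2\pi)^{r-1}/r!$, as claimed. Alternatively one may avoid the chart computation by taking a trace: $\sum_i\langle\hat e_i,\hat e_i\rangle_{\widehat{h}}\equiv1$ forces $r\,c=\mathrm{Vol}(\mathbb{P}V^*,\omega_{\mathrm{FS}})=(2\pi)^{r-1}/(r-1)!$, in the normalization where $\omega_{\mathrm{FS}}$ represents $2\pi c_1(\mathcal{O}_{\mathbb{P}V^*}(1))$.

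I do not expect any real obstacle: the computation is routine, as the paper indicates by calling it straightforward. The only points that require care are the normalization of $\omega_{\mathrm{FS}}$ --- it must be the one for which a projective line has area $2\pi$, which is what produces the powers of $2\pi$ in \eqref{eq3} --- and the exact power of $(1+\sum_j\abs{\xi_j}^2)$ in the Fubini--Study volume form, the exponent $r$ there combining with $\langle\hat e_r,\hat e_r\rangle_{\widehat{h}}$ to produce the exponent $r+1$ in \eqref{eq3}.
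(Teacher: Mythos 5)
Your proof is correct; the paper offers no argument here beyond calling the identity ``a straightforward computation,'' and your write-up supplies exactly that computation with the right normalizations (fibrewise area $2\pi$ for a line, exponent $r$ in the Fubini--Study volume form combining with the factor $(1+\sum_j\abs{\xi_j}^2)^{-1}$ from $\langle\hat e_r,\hat e_r\rangle_{\widehat h}$ to give the exponent $r+1$ in \eqref{eq3}). The trace shortcut $\sum_i\langle\hat e_i,\hat e_i\rangle_{\widehat h}\equiv 1$, giving $rc=\mathrm{Vol}(\mathbb{P}V^*)=(2\pi)^{r-1}/(r-1)!$, is the cleanest way to pin down the constant and could replace the chart computation entirely.
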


\begin{definition}\label{def0}
For any $v \in V$ and any hermitian inner product  $h$ on $V$, we define an endomorphism $\lambda(h)=\lambda(v,h)$ of $V$ by
$$\lambda(v,h)=\frac{1}{\|v\|_{h}^2} v \otimes v^{*_{h}},$$
where $v^{*_{h}}(\cdot)= h(\cdot,v)$ is the dual  element of $v$ with respect to the inner product $h$.
\end{definition}

The above  settings can be made  into the  following family version. Let $(M,\omega)$ be a K\"ahler manifold of dimension $m$ and $E$
be a holomorphic vector bundle on $M$ of rank $r\ge 2$. Let $L$ be an
ample line bundle on $M$ endowed with a hermitian metric $\sigma$
so that $i\bar{\partial}\partial \log \sigma=\omega$.
The configuration $(M,\omega, L, \sigma)$ is called a polarized K\"ahler manifold.
Let $\PE$  be the projectivization of the dual bundle $E^*$ of $E$. A hermitian metric $h$ on $E$ induces a hermitian metric $\widehat{h}$
on the line bundle $\mathcal O_{\PE}(1)$ by~\eqref{12}.

Let $\omega_g$ be   the  $(1,1)$-form  on $ \PE$ defined by
$$\omega_{g}= i\bar{\partial}\partial \log \widehat{h}.$$
Let  $\pi:\mathbb{P}E^*\rightarrow M$ be the projection map.
Define  the smooth functions $f_{1}, \dots f_{m} \in \mathcal  C^\infty(\PE)$ by
\begin{equation}\label{eq4}\frac{\omega_{g}^{r-1+j}}{(r-1+j)!} \wedge \frac{\pi^*\omega^{m-j}}{(m-j)!}=f_{j}\frac{\omega_{g}^{r-1}}{(r-1)!}\wedge \frac{\pi^*\omega^m}{m!}.\end{equation}
Alternatively, $f_j$'s can be generated by the following equation
\begin{equation}\label{eq-5}
\omega_k^{m+r-1}=\frac{(m+r-1)!}{m!(r-1)!}\sum_{j=1}^m  k^{m-j} f_j\omega_g^{r-1}\wedge\pi^*\omega^m,
\end{equation}
where
\[
\omega_k=\omega_g+k\pi^*\omega.
\]

\begin{definition}\label{def1}
Let $X$ be a compact \ka  manifold with the K\"ahler metric $\omega$. Assume that the complex dimension of $X$ is $N$.
For any $(j,j)$-form $\alpha$ on $X$, we define the contraction
$\Lambda_{\omega}^j\alpha$ of $\alpha$ with respect to the
K\"ahler form $\omega$ by $$\frac{N!}{j!(N-j)!} \alpha \wedge
\omega^{N-j}=(\Lambda_{\omega}^j\alpha )\,\,\omega^N.$$
In particular, we define
\[
\Lambda_\omega\alpha=\Lambda_\omega^1\alpha.
\]

\end{definition}

\begin{definition}
We define the vertical subbundle $V$ and the horizontal subbundle $H$ of the holomorphic tangent bundle $T(\PE)$ of $ \PE$ as follows: let  $u \in \PE $ and let $\pi(u)=x$.
\begin{align*}
&V_{u}=T_{u}(\mathbb{P}E^*_x);\\
&H_{u}=\set {\xi \in T_{u}({\PE}) \mid \omega_{g}(\xi, w)=0, \forall w \in V_{u} }.
\end{align*}
\end{definition}

Since the restriction of $\omega_{g}$ to the fibre is the Fubini-Study metric of $\mathbb{P}E^*_x$, $\omega_{g}|_{\textrm{Fibre}}$ is non-degenerate. As a result, $H$ is indeed a vector bundle of rank $m$, and we have the following (holomorphic bundle) decomposition  $$T(\PE) =H \oplus V.$$  
By the dimension consideration, we have $H^*=\pi^*(T^*M)$, where $H^*$ is the dual bundle of $H$. Let $V^*$ be the dual bundle of $V$. Then we have
\begin{equation}\label{1}
T^*(\PE)=V^*\oplus \pi^*(T^*M).
\end{equation}

Let $\bigwedge (T^*(\PE))$ be the bundle of differential forms of $\PE$. Write
$$\bigwedge (T^*(\PE))= C_{H} \oplus C_{V} \oplus C_{m},$$
where $C_{H}, C_{V}$ and $C_{m}$ are the bundles of horizontal, vertical, and mixed forms, respectively.
Note that $C_{H}= \pi^* (\bigwedge (T^*M)).$ For any differential form $\alpha$ on $\PE$, we write  $\alpha=\alpha_{H}+\alpha_{V}+\alpha_{m}$, where $\alpha_{H},\alpha_{V},\alpha_{m}$ are  the
horizontal, vertical, and mixed components of $\alpha$ respectively.

Using the above notation, we have
\begin{lem} There is no mixed component of $\omega_g$.
\end{lem}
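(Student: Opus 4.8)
The plan is to deduce the lemma directly from the definition of the horizontal subbundle $H$, after a small amount of linear-algebra bookkeeping about the decomposition $\bigwedge(T^*(\PE))=C_H\oplus C_V\oplus C_m$. In degree two this decomposition refines the fibrewise real splitting $T(\PE)=H\oplus V$ into $\Lambda^2 H^*\oplus (H^*\wedge V^*)\oplus\Lambda^2 V^*$; under complexification $\Lambda^2 H^*\subseteq C_H$, $\Lambda^2 V^*\subseteq C_V$, and $H^*\wedge V^*\subseteq C_m$. Consequently, for a real $2$-form $\alpha$ one has $\alpha_m=0$ if and only if $\alpha(\xi,w)=0$ for every horizontal vector $\xi$ and every vertical vector $w$: a form in $\Lambda^2 H^*$ vanishes as soon as one of its arguments is vertical, and a form in $\Lambda^2 V^*$ vanishes as soon as one of its arguments is horizontal.

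Granting this, the lemma is immediate. Writing $\omega_g=(\omega_g)_H+(\omega_g)_V+(\omega_g)_m$ and taking $\xi\in H$, $w\in V$, the first two terms contribute nothing, so
\[
(\omega_g)_m(\xi,w)=\omega_g(\xi,w)=0,
\]
the last equality being precisely the defining property $H_u=\{\xi: \omega_g(\xi,w)=0\ \forall\,w\in V_u\}$ of the horizontal subbundle. Hence $(\omega_g)_m=0$.

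Before running this I would verify the two inputs used above. First, the splitting $T(\PE)=H\oplus V$ is $J$-invariant: the fibre $\PE_x$ is a complex submanifold, so $V=T(\PE_x)$ is $J$-invariant, and $\omega_g$, being of type $(1,1)$, satisfies $\omega_g(J\cdot,J\cdot)=\omega_g(\cdot,\cdot)$, whence its $\omega_g$-orthogonal complement $H$ is $J$-invariant as well; this is what legitimizes the identifications $C_H=\pi^*\bigwedge(T^*M)$ and $H^*=\pi^*(T^*M)$ already invoked. Second, the identification of $\alpha_m$ with the values of $\alpha$ on horizontal–vertical pairs is the routine claim quoted in the first paragraph; the same $J$-invariance lets one rephrase everything on $(1,0)$– and $(0,1)$–vectors, i.e. $\omega_g$ has no component in $V^*\otimes \pi^*\overline{T^*M}$ or in $\pi^*(T^*M)\otimes\overline{V^*}$. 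Since the statement is in essence a reformulation of how $H$ was defined, I do not expect any genuine obstacle; the only care needed lies in these conventions.
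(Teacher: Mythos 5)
Your argument is correct and is essentially the paper's own proof: the lemma is an immediate unwinding of the definition of $H$ as the $\omega_g$-orthogonal complement of $V$ (equivalently, of the induced splitting $T^*(\PE)=V^*\oplus\pi^*(T^*M)$ in \eqref{1}), which you simply spell out with the extra (correct) care about $J$-invariance and the $(1,1)$-type of $\omega_g$.
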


\begin{proof} This follows from ~\eqref{1}.

\end{proof}

If we write
\[
\omega_g=(\omega_g)_H+(\omega_g)_V
\]
as its horizontal and vertical parts. Then~\eqref{eq4} can be written as
\begin{equation}\label{3}
f_j\,\pi^*(\omega^m)=\frac{m!}{j!(m-j)!}(\omega_g)_H^j\wedge\pi^*\omega^{m-j}.
\end{equation}

Let $F_h\in {\bigwedge}^{1,1}({\rm Hom}\, (E,E))$ be the curvature tensor of $h$
\[
F_h=\bar\pa\,(\pa h\cdot h^{-1}).
\]
From~\eqref{3}, we can prove the following

\begin{lem}\label{lem22}
For any $v \in E^*$, we have $$f_{j}([v])= \Lambda^j _\omega\Big( {\sqrt{-1}}\,\textup{Tr}\big(   \lambda(v, h)  F_{h}   \big)\Big)^j,$$  where $[v] \in \PE $ is the class of $v$ in $\PE$.
\end{lem}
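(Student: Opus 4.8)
The plan is to compute $f_j([v])$ by first identifying $(\omega_g)_H$ explicitly at a point $[v]\in\mathbb{P}E^*$ and then applying the contraction formula~\eqref{3}. Fix $x\in M$ and a point $[v]\in\mathbb{P}E^*_x$ with $v\in E^*_x$. Choose a local holomorphic frame for $E$ near $x$ that is normal for $h$ at $x$ (so $h=I$ and $\pa h=0$ at $x$, with $F_h=\bar\pa\pa h$ up to sign at $x$), and dually a frame for $E^*$. In such a frame the hermitian metric $\widehat h$ on $\mathcal{O}_{\mathbb{P}E^*}(1)$ from~\eqref{12} has the local expression $\log\widehat h = \log\|f\|_h^2$ relative to the tautological section, where $f$ runs over $E^*$; writing $\omega_g = \sqrt{-1}\,\bar\pa\pa\log\widehat h$ and differentiating, the vertical part reproduces the Fubini--Study form on the fibre, while the \emph{horizontal} part at $[v]$ picks up exactly the curvature contribution $\sqrt{-1}\,\pi^*\bigl(\operatorname{Tr}(\lambda(v,h)F_h)\bigr)$. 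The endomorphism $\lambda(v,h)=\frac{1}{\|v\|_h^2}v\otimes v^{*_h}$ is precisely the orthogonal projection onto the line $\mathbb{C}v\subseteq E^*_x$ (equivalently, it records which hyperplane $V_f\subseteq E$ the point $[v]$ represents), so the trace against $F_h$ is the natural ``restriction of the curvature to the line/hyperplane selected by $[v]$'' — this is the standard computation of the curvature of $\mathcal{O}(1)$ along a fibre bundle of projective spaces.

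Having established
\[
(\omega_g)_H\big|_{[v]} = \sqrt{-1}\,\pi^*\bigl(\operatorname{Tr}(\lambda(v,h)F_h)\bigr)\big|_x,
\]
I would substitute into~\eqref{3}. Since $(\omega_g)_H$ is the pullback of a $(1,1)$-form $\beta := \sqrt{-1}\operatorname{Tr}(\lambda(v,h)F_h)$ on $M$, we get
\[
f_j([v])\,\pi^*(\omega^m)=\frac{m!}{j!(m-j)!}\,\pi^*\bigl(\beta^j\wedge\omega^{m-j}\bigr),
\]
and cancelling $\pi^*$ together with Definition~\ref{def1} (applied on $M$, where $N=m$) yields $f_j([v])=\Lambda_\omega^j(\beta^j)$. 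Interpreting this with the paper's slightly compressed notation $\bigl(\sqrt{-1}\operatorname{Tr}(\lambda(v,h)F_h)\bigr)^j = \beta^j$ gives exactly the claimed formula.

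The main obstacle is the first step: correctly extracting the horizontal component of $\omega_g$ and showing the curvature term is precisely $\operatorname{Tr}(\lambda(v,h)F_h)$ rather than some other contraction of $F_h$. This requires care with the bookkeeping of the second fundamental form of the decomposition $T(\mathbb{P}E^*)=H\oplus V$ and with the fact that $H^*=\pi^*T^*M$, so that cross terms between fibre and base directions in $\bar\pa\pa\log\widehat h$ are arranged to vanish on $H$ by the very definition of $H$ (via $\omega_g(\xi,w)=0$ for $w$ vertical). One clean way to organize this is to work at the point $[v]$ in affine coordinates on the fibre centered so that the hyperplane $V_f$ is a coordinate subspace, use the $h$-normal frame on the base, and observe that the only surviving $(1,1)$-entries involving two horizontal directions come from the base-Hessian of $\log\|f\|_h^2$ evaluated at the section representing $[v]$, which is $-\partial\bar\partial\log h$ contracted by the rank-one projector $\lambda(v,h)$, i.e. $\operatorname{Tr}(\lambda(v,h)F_h)$. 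Once this local identity is in hand, the remainder is the routine contraction argument above.
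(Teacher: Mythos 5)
Your proposal is correct and follows essentially the same route as the paper: choose an $h$-normal frame at $x$ so that the mixed terms of $\sqrt{-1}\,\bar\partial\partial\log\widehat{h}$ vanish and $(\omega_g)_H=\pi^*\beta$ with $\beta=\sqrt{-1}\,\operatorname{Tr}(\lambda(v,h)F_h)$, then feed this into~\eqref{3} and read off $f_j=\Lambda_\omega^j(\beta^j)$ from Definition~\ref{def1}. The paper's proof is just a more compressed version of the same computation, so there is nothing further to add.
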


\begin{proof}
Let
\begin{equation}\label{beta}
\beta={\sqrt{-1}}\,\textup{tr}\big(   \lambda(v, h)  F_{h}   \big)=\sqrt{-1}\|v\|_h^{-2} \langle F_h(v),v\rangle_h.
\end{equation}
Let $x=\pi(u)$.  We assume that at $x$, $\{e_1,\cdots, e_r\}$ is a {\it normal} frame. That is, under this frame
\[
h_{i\bar j}(x)=\delta_{ij},\quad dh_{i\bar j}(x)=0.
\]
Since there are no connection terms, by  a straightforward computation, we obtain\footnote{\label{footnote-1}Strictly speaking, $\beta$ is a section of the sheaf $\mathcal C^\infty(\PE)\otimes \mathcal A^{1,1}(M)$. So the $\pi^*$ operation is only acting on the second component.}
\begin{equation}\label{gamma}
\omega_g= \pi^*(\beta)+\left.\omega_{g}\right|_{\PE_x}.
\end{equation}
Therefore,
\begin{equation}\label{2}
(\omega_g)_H=\pi^*\beta.
\end{equation}
The lemma follows from Definition~\ref{def1}.

\end{proof}

Let $\alpha$ be a $(1,1)$-form on $\PE$. Define $\tilde \Lambda_{\omega_g}\alpha_V$ by
\[
\tilde \Lambda_{\omega_g}\alpha_V\wedge((\omega_g)_V)^{r-1}
=(r-1)\alpha_V\wedge
((\omega_g)_V)^{r-2}.
\]
Therefore, we have
\[
(\tilde \Lambda_{\omega_g}\alpha_V)\,{\omega_g^{m+r-j-1}}\wedge\pi^*\omega^j=(m+r-j-1)\alpha_V\wedge\omega_g^{m+r-j-2}\wedge\pi^*\omega^j
\]
for $j\geq 0$.

\begin{definition}\label{def2}
For any smooth function  $f \in \mathcal C^{\infty}(\PE)$, define  the operators $\Delta_V, \Delta_H$ and $\tilde \Delta_H$  (and call them the Laplacians) by the following equations
\begin{align*}
&(r-1) \ddbar f \wedge \omega_{g}^{r-2} \wedge \pi^*\omega^m= \Delta_{V}f \omega_{g}^{r-1} \wedge \pi^*\omega^m,\\
&m \ddbar f \wedge  \omega_{g}^{r-1} \wedge \pi^*\omega^{m-1}= \Delta_{H}f \,\omega_{g}^{r-1} \wedge \pi^*\omega^m,\\
&\tilde{\Delta}_{H}f= \Delta_{H}f-f_{1}\Delta_{V}f.
\end{align*}

\end{definition}

\begin{Rmk}

The Laplacians $\Delta_{H}$ and $\Delta_{V}$ are the same as ones defined in \cite{H1}.

\end{Rmk}



\begin{definition}\label{def3}
For any $x\in M$, we define $W_{x}$ as the space of all eigenfunctions of the  Laplacian (on functions) on $\mathbb P E_x$ (with respect to the metric $\omega_g|_{\mathbb P E_x}$)
associated to the first nonzero eigenvalue. Define the vector bundle $W$ whose fibers are $W_x$(c.f. \cite{H2}).

\end{definition}

Let $\textup{End}_{0}(E_{x})$ be the space of traceless endomorphisms of $E_x$ for any $x\in M$.
The first nonzero eigenvalue of the Laplacian  is $r$. As is well-known, $$\Phi \in \textup{End}_{0}(E_{x}) \to \textup{Tr}\big(\lambda(h)\Phi\big) \in W_{x}$$ is a $1$-$1$ correspondence.  Define  $\textup{End}_{0}(E)$ to be  the smooth vector bundle whose fibers are $\textup{End}_0(E_x)$ for any $x\in M$. Thus we have $W=\textup{End}_0(E)$.










\section{Scalar curvature}

The goal of this section is to find the  asymptotic expansion for the scalar curvature of the K\"ahler form $\omega_{k}=\omega_{g}+k \pi^*\omega$. The main result of this section is

\begin{thm}\label{thm3}
Let $\omega$ be a K\"ahler metric on $M$ and $h$ be a hermitian metric on $E$.
Let
\[
\omega_k=\omega_g+k\pi^*(\omega),
\]
where $k$ is a large positive integer.
Then we have the following expansion of the scalar curvature $\textup{Scal}(\omega_{k})$ of $\omega_k$
  \begin{align*}&\textup{Scal}(\omega_{k})= r(r-1)+k^{-1}(\pi^*S(\omega)+2r\,\Lambda_\omega(\textup{Tr}(\lambda(h)F_h^\circ)))\\&+k^{-2}\Big(2
\Lambda_\omega^2((\pi^*(\Ric(\omega)-\textup{Tr}(iF_h))\wedge\omega_g)_H)-f_1(\pi^*(S(\omega)-\Lambda_\omega(\textup{Tr}(iF_h))))\\
&+
\Delta_V(f_2-\frac 12 f_1^2)+\tilde\Delta_H f_1-rf_{1}^2+2rf_{2}\Big)+O(k^{-3}),\end{align*}
 where $S(\omega)$ is the scalar curvature of $\omega$ and $F^{\circ}_{h}=F_{h}-\frac{1}{r}tr(F_{h})$ is the trace-less part of the curvature tensor  of $h$. (For the definition of $f_{1}, \dots f_{m}$, $\lambda(h)$, $\tilde{\Delta}_{H}$, $\Delta_{V}$, $\Lambda_\omega $ and $\Lambda_\omega^2$,  see \eqref{eq4}, Definition \ref{def0}, Definition \ref{def1} and Definition \ref{def2}).

\end{thm}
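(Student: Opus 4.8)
The plan is to compute the scalar curvature of $\omega_k=\omega_g+k\pi^*\omega$ by a careful bookkeeping of powers of $k$, exploiting the splitting $T(\PE)=H\oplus V$ from~\eqref{1} and the absence of a mixed component of $\omega_g$. First I would fix a point $u\in\PE$ over $x=\pi(u)$ and, as in the proof of Lemma~\ref{lem22}, choose a normal frame $\{e_1,\dots,e_r\}$ for $h$ at $x$, so that $\omega_g=\pi^*\beta+\omega_g|_{\PE_x}$ with $\beta$ the fibrewise-dependent $(1,1)$-form from~\eqref{beta}; then $(\omega_g)_H=\pi^*\beta$ by~\eqref{2}. The key formula is the classical identity $\mathrm{Scal}(\omega_k)=\Lambda_{\omega_k}\Ric(\omega_k)$ together with $\Ric(\omega_k)=-\ddbar\log\det(\omega_k)$. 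The determinant of $\omega_k$ in the $H\oplus V$ block form is $\det(\omega_g|_V)\cdot\det\big(k\pi^*\omega+(\omega_g)_H\big)$ up to the mixed corrections, and since $H^*=\pi^*T^*M$ has rank $m$ while $V$ has rank $r-1$, the horizontal block expands as $k^m\det(\pi^*\omega)\det\big(1+k^{-1}\omega^{-1}\beta\big)$; taking $\ddbar\log$ of this produces the terms involving $\Ric(\omega)$, $\mathrm{Tr}(iF_h)$ (from the fibre-integral/trace structure of $\beta$), and the curvature of the Fubini–Study metric on the fibres, which accounts for the leading constant $r(r-1)$ (the scalar curvature of $\mathbb P^{r-1}$ with its standard Fubini–Study metric normalized so that the first eigenvalue is $r$).

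Next I would organize the contraction $\Lambda_{\omega_k}$ as a Neumann-type series in $k^{-1}$. Because $\omega_k|_H\sim k\pi^*\omega$ and $\omega_k|_V=(\omega_g)_V$, the inverse metric splits, to the order needed, into a vertical piece of size $O(1)$ and a horizontal piece of size $O(k^{-1})$; contracting against the various curvature $(1,1)$-forms then naturally produces $\Delta_V$, $\Delta_H$, $\tilde\Delta_H$, $\Lambda_\omega$ and $\Lambda_\omega^2$ applied to the relevant tensors. The functions $f_1,\dots,f_m$ enter precisely through the expansion~\eqref{eq-5} of $\omega_k^{m+r-1}$, equivalently through~\eqref{3}: the volume-form ratio $\omega_k^{m+r-1}/(\omega_g^{r-1}\wedge\pi^*\omega^m)$ is $\sum k^{m-j}f_j$ up to a combinatorial constant, so every place where I must divide by the volume form of $\omega_k$ I pick up a factor $(\sum k^{m-j}f_j)^{-1}=k^{-m}(1-k^{-1}f_1+k^{-2}(f_1^2-f_2)+\cdots)$ after normalizing $f_0=1$. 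Expanding this against the numerator, which itself carries explicit $f_j$'s from $(\omega_g)_H^j\wedge\pi^*\omega^{m-j}$, is what generates the cross terms $-f_1(\pi^*(S(\omega)-\Lambda_\omega\mathrm{Tr}(iF_h)))$, $\Delta_V(f_2-\tfrac12 f_1^2)$, $-rf_1^2+2rf_2$, and so on. The first-order term is cleaner: only $\pi^*S(\omega)$ survives from the base Ricci form and $2r\,\Lambda_\omega(\mathrm{Tr}(\lambda(h)F_h^\circ))$ from differentiating $\log\det$ of the horizontal block in the fibre direction, the trace-free part $F_h^\circ$ appearing because the $\tfrac1r\mathrm{Tr}(F_h)$ piece is absorbed into the $k^{-2}$ order.

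The main obstacle will be the $k^{-2}$ term: keeping track of \emph{all} second-order contributions simultaneously — the second-order expansion of $\det(1+k^{-1}\omega^{-1}\beta)$, the quadratic corrections from inverting $\omega_k$ (the $(\omega_k^{-1})^2$-type terms that produce $\Lambda_\omega^2$ of wedge products), the mixed cross terms between horizontal and vertical contractions, and the $f_j$-expansion of the volume normalization — and then checking that they reassemble into the stated closed form, in particular that the genuinely global quantity $2\Lambda_\omega^2\big((\pi^*(\Ric(\omega)-\mathrm{Tr}(iF_h))\wedge\omega_g)_H\big)$ is independent of the chosen normal frame. I would handle frame-independence by noting that each individual term in the final expansion is manifestly a well-defined tensorial object (scalar curvatures, traces of curvature, the canonically-defined operators of Definition~\ref{def2}, and the $f_j$ of~\eqref{eq4}), so it suffices to verify the identity at a point in the normal frame where connection terms vanish, exactly as in Lemma~\ref{lem22}. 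A secondary but routine difficulty is the fibrewise integration/eigenfunction analysis: one uses that $\beta$ restricted to a fibre, via Definition~\ref{def0} and Definition~\ref{def3}, lies in the first-eigenspace data $W=\mathrm{End}_0(E)$, which controls how $\Delta_V$ acts on the fibre-dependent pieces and ultimately pins down the numerical constants $r$, $r(r-1)$, $2r$.
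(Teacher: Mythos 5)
Your proposal is correct and follows essentially the same route as the paper: the paper also obtains $\Ric(\omega_k)$ from the splitting of $\det(T\PE)$ into its vertical and horizontal factors (packaged there via the tangent and Euler sequences, which give the exact identity $\Ric(\omega_k)=r\omega_g-\pi^*\textup{Tr}(iF_h)+\pi^*\Ric(\omega)+\ddbar\log\sum k^{-j}f_j$, the analogue of your block log-determinant), and then contracts with $\Lambda_{\omega_k}$ expanded in powers of $k^{-1}$ exactly as in your Neumann-series step, with the $f_j$'s entering through \eqref{eq-5} and the identity $\Delta_V f_1=rf_1-\Lambda_\omega\textup{Tr}(iF_h)$ (your first-eigenspace observation) producing the stated $k^{-1}$ coefficient. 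The only cosmetic difference is that the Euler-sequence computation makes the fibre contribution $r\omega_g-\pi^*\textup{Tr}(iF_h)$ explicit rather than leaving it inside a log-determinant of the vertical block.
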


 Let $\alpha=\pi^{*}\alpha_1$ be a horizontal form of $\PE$ (see footnote {\ref{footnote-1}}). Then we define
 \[
 \Lambda_\omega\alpha=\pi^*(\Lambda_\omega\alpha_1).
 \]

First we prove the following purely algebraic  lemmas.

\begin{lem}\label{lem1}

Let $\alpha $ be a $(1,1)$-form on $\PE$. Then
$$\Lambda_{\omega_{k}}\alpha= \tilde\Lambda_{\omega_{g}} \alpha_{V}+k^{-1}\Lambda_{\omega} \alpha_{H}+k^{-2}\big( 2\Lambda_{\omega}^2 ( \alpha \wedge \omega_{g})_{H}-(\Lambda_{\omega} \alpha_{H})f_{1}  \big)+O(k^{-3}). $$
In particular if $\alpha \in \bigwedge^{1,1}(M)$, then
$$\Lambda_{\omega_{k}}\alpha=k^{-1}\pi^*(\Lambda_\omega\alpha)+ k^{-2}\Big (  2\Lambda^2_\omega(\alpha \wedge \omega_{g})_{H}-f_{1}\,\pi^*(\Lambda_\omega\alpha)  \Big )+O(k^{-3}).$$
\end{lem}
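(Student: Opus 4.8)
The plan is to compute the expansion of $\Lambda_{\omega_k}\alpha$ for a $(1,1)$-form $\alpha$ on $\PE$ directly from Definition~\ref{def1}, working on the total space $\PE$ which has complex dimension $N=m+r-1$. By definition, $(\Lambda_{\omega_k}\alpha)\,\omega_k^{N}=N\,\alpha\wedge\omega_k^{N-1}$, so the task is to expand both $\omega_k^N$ and $\alpha\wedge\omega_k^{N-1}$ as polynomials in $k$ and divide. Using $\omega_k=\omega_g+k\pi^*\omega$ and the binomial theorem, $\omega_k^{N-1}=\sum_{j}\binom{N-1}{j}k^{N-1-j}\omega_g^{j}\wedge\pi^*\omega^{N-1-j}$; since $\pi^*\omega^{m+1}=0$ (the base has dimension $m$), only the top three powers of $k$ survive in the ratio, namely the terms with $\pi^*\omega^m,\pi^*\omega^{m-1},\pi^*\omega^{m-2}$. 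Similarly $\omega_k^N=\frac{(m+r-1)!}{m!(r-1)!}\sum_j k^{m-j}f_j\,\omega_g^{r-1}\wedge\pi^*\omega^m$ by~\eqref{eq-5}, with leading term $k^m\cdot\binom{m+r-1}{m}\omega_g^{r-1}\wedge\pi^*\omega^m$ (using $f_0=1$, which follows from~\eqref{3}).

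The key computational step is to identify, for each relevant power of $k$, the coefficient of the reference volume form $\omega_g^{r-1}\wedge\pi^*\omega^m$ in $\alpha\wedge\omega_k^{N-1}$. Writing $\alpha=\alpha_V+\alpha_H+\alpha_m$ and noting that mixed forms $\alpha_m$ wedged with the (purely horizontal-or-vertical) powers of $\omega_g$ and $\pi^*\omega$ cannot produce a top form, only $\alpha_V$ and $\alpha_H$ contribute. The term $\alpha_V\wedge\omega_g^{N-1-j}\wedge\pi^*\omega^{j}$ is nonzero only when the vertical degree matches, forcing $j=m$, and this is precisely the term that produces $\tilde\Lambda_{\omega_g}\alpha_V$ by the defining relation for $\tilde\Lambda_{\omega_g}$; this gives the $k^0$ term. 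For $\alpha_H=\pi^*(\text{something})$, the leading contribution comes with $\pi^*\omega^{m-1}$ and, comparing with Definition~\ref{def1} on the base, yields $k^{-1}\Lambda_\omega\alpha_H$; the next order, with $\pi^*\omega^{m-2}$, produces the term $2\Lambda_\omega^2(\alpha\wedge\omega_g)_H$. The correction $-(\Lambda_\omega\alpha_H)f_1$ at order $k^{-2}$ arises from the subleading ($k^{m-1}f_1$) term in the denominator expansion~\eqref{eq-5}: expanding $1/(1+k^{-1}f_1+\cdots)=1-k^{-1}f_1+\cdots$ and multiplying through mixes the $k^{-1}$ numerator term down to order $k^{-2}$.

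After assembling these pieces and carefully matching powers of $k$, the general formula follows; the specialization to $\alpha\in\bigwedge^{1,1}(M)$ is then immediate, since $\alpha_V=0$ and $\alpha_H=\pi^*\alpha$, so the $\tilde\Lambda_{\omega_g}\alpha_V$ term drops and $\Lambda_\omega\alpha_H=\pi^*(\Lambda_\omega\alpha)$. I expect the main obstacle to be bookkeeping rather than conceptual: one must be scrupulous about which wedge products of horizontal/vertical/mixed forms with the top powers $\omega_g^{r-1}$ (vertical, since $(\omega_g)_H\wedge(\omega_g)_H\wedge\cdots$ beyond degree $m$ vanishes — here I would use~\eqref{3} to control $(\omega_g)_H$ versus $(\omega_g)_V$) and $\pi^*\omega^m$ can possibly be top-degree, and about correctly expanding the denominator to two orders. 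The definition of $\Lambda_\omega^2$ applied to a form on $\PE$ (via the convention that $\Lambda_\omega$ of a horizontal form is $\pi^*$ of the base operation) must be used consistently, and the factor of $2=\binom{2}{1}$ and the $f_1$ coefficient both need to be tracked through the binomial expansions. I would organize the proof by first recording $f_0=1$ and the expansion of $\omega_k^N$ from~\eqref{eq-5}, then expanding $\alpha\wedge\omega_k^{N-1}$ term by term in decreasing powers of $k$, and finally dividing the two series.
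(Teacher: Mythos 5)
Your overall strategy is exactly the paper's: expand $\alpha\wedge\omega_k^{m+r-2}$ and $\omega_k^{m+r-1}$ in powers of $k$ against the reference volume $\omega_g^{r-1}\wedge\pi^*\omega^m$, and divide the two series. Your identification of the horizontal contributions ($\Lambda_\omega\alpha_H$ at order $k^{-1}$, the term $2\Lambda_\omega^2(\alpha\wedge\omega_g)_H$ with the factor $2$ coming from Definition~\ref{def1} with $j=2$), and the remark that mixed components cannot reach top degree, are all correct and match the paper.

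There is, however, one step in your sketch that would fail as written: the claim that $\alpha_V\wedge\omega_g^{N-1-j}\wedge\pi^*\omega^{j}$ is nonzero only for $j=m$. Since $\omega_g=(\omega_g)_H+(\omega_g)_V$ and only the power $(\omega_g)_V^{r-1}$ is annihilated by wedging with the vertical $(1,1)$-form $\alpha_V$, the term $\alpha_V\wedge(\omega_g)_V^{r-2}\wedge(\omega_g)_H^{m-j}\wedge\pi^*\omega^{j}$ survives for every $j\le m$; by the defining relation for $\tilde\Lambda_{\omega_g}$ together with~\eqref{3} it contributes $(\tilde\Lambda_{\omega_g}\alpha_V)\,f_{m-j}$ to the numerator at order $k^{j}$. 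Thus the full vertical contribution to the numerator is $(\tilde\Lambda_{\omega_g}\alpha_V)\sum_j k^{-j}f_j$, which cancels the denominator $\sum_j k^{-j}f_j$ exactly and leaves $\tilde\Lambda_{\omega_g}\alpha_V$ with no lower-order corrections. If you instead keep only the $j=m$ term, dividing by $1+k^{-1}f_1+\cdots$ produces a spurious $-k^{-1}f_1\,\tilde\Lambda_{\omega_g}\alpha_V$ at order $k^{-1}$ (and further spurious terms at order $k^{-2}$), contradicting the statement of the lemma. In the paper's notation the coefficients are $g_0=\tilde\Lambda_{\omega_g}\alpha_V$, $g_1=\Lambda_\omega\alpha_H+g_0f_1$, $g_2=2\Lambda_\omega^2(\alpha\wedge\omega_g)_H+g_0f_2$, and it is precisely the cancellations in $g_1-g_0f_1$ and in $g_2-g_1f_1-g_0f_2+g_0f_1^2$ that yield the stated expansion. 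Once this is corrected the rest of your plan goes through, and the specialization to $\alpha\in\bigwedge^{1,1}(M)$ is immediate as you say.
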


\begin{proof}

By definition, we have $$(\Lambda_{\omega_{k}}\alpha) \omega_{k}^{m+r-1}= (m+r-1)\alpha \wedge \omega_{k}^{m+r-2}.$$ We define $g_{j}=g_{j}(\alpha) $ by the equation

$$\frac{1}{(m+r-2)!}\,\alpha \wedge \omega_k^{m+r-2}=\frac{1}{(r-1)!m!}\, k^m (\sum _{j=0}^m k^{-j}g_{j})\, \omega_{g}^{r-1}\wedge \omega^m.$$

 Let
\[
\alpha=\alpha_V+\alpha_H+\alpha_m
\]
be the decomposition of $\alpha$ into its vertical, horizontal, and mixed components.
Then we have
\begin{align*}
&\frac{(r-1)!\,m!}{(m+r-2)!}\,\alpha\wedge\omega_k^{m+r-2}\\
&
=(\tilde\Lambda_{\omega_g}\alpha_V)((\omega_g)_H+k\pi^*\omega)^m\wedge((\omega_g)_V)^{r-1}
\\&+m\alpha_H\wedge ((\omega_g)_H+k\pi^*\omega)^{m-1}\wedge((\omega_g)_V)^{r-1}\\
&
=\sum_j k^{m-j} g_j ((\omega_g)_V)^{r-1}\wedge\pi^*\omega^m.
\end{align*}

 Simple calculation shows that
  \begin{align*}
 &g_{0}=\tilde\Lambda_{\omega_{g}} \alpha_{V};\\
 &
g_{1}= \Lambda_{\omega} \alpha_{H}+(\tilde\Lambda_{\omega_{g}} \alpha_{V})f_{1};\\
&g_{2}= 2\Lambda_{\omega}^2 ( \alpha \wedge \omega_{g})_{H}+(\tilde\Lambda_{\omega_{g}} \alpha_{V})f_{2}.
\end{align*}

By~\eqref{eq-5}, the above equation implies
\begin{align*}
&\Lambda_{\omega_{k}}\alpha= \frac{\sum k^{-j}g_{j}}{\sum k^{-j}f_{j}}\\&= g_{0}+k^{-1}(g_{1}-g_{0}f_{1})+k^{-2}(g_{2}-g_{1}f_{1}-g_{0}f_{2}+g_{0}f_{1}^2)+O(k^{-3}).\end{align*}

The lemma is proved.

\end{proof}

Let $\Delta_k$ be the Laplacian with respect to the metric $\omega_k$. That is,
\[
\Delta_{k}f=\Lambda_{\omega_{k}} (\ddbar f)
\]
for smooth functions $f$ on $\PE$. Then we have the following asymptotics:

\begin{lem}\label{lem2}
For any $f \in \mathcal C^{\infty}(\PE)$, we have
$$\Delta_{k}f= \Delta_{V}f+k^{-1}\tilde{\Delta}_{H}f+k^{-2} \Big ( -f_{1}\tilde{\Delta}_{H}f  + 2\Lambda_\omega^2(\ddbar f \wedge \omega_{g})_{H}\Big )+O(k^{-3})$$
as $k\to\infty$.

\end{lem}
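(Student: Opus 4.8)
The plan is to deduce Lemma~\ref{lem2} from Lemma~\ref{lem1} by specializing the $(1,1)$-form $\alpha$ to $\alpha=\ddbar f$ for a fixed smooth function $f$ on $\PE$, and then translating each term of the general expansion into the language of the Laplacians introduced in Definition~\ref{def2}. Since $\Delta_k f=\Lambda_{\omega_k}(\ddbar f)$ by definition, Lemma~\ref{lem1} applied to $\alpha=\ddbar f$ immediately gives
\[
\Delta_k f=\tilde\Lambda_{\omega_g}(\ddbar f)_V+k^{-1}\Lambda_\omega(\ddbar f)_H+k^{-2}\bigl(2\Lambda_\omega^2((\ddbar f)\wedge\omega_g)_H-f_1\Lambda_\omega(\ddbar f)_H\bigr)+O(k^{-3}).
\]
So the entire content of the lemma reduces to the three identifications
\[
\tilde\Lambda_{\omega_g}(\ddbar f)_V=\Delta_V f,\qquad \Lambda_\omega(\ddbar f)_H=\tilde\Delta_H f,\qquad -f_1\Lambda_\omega(\ddbar f)_H=-f_1\tilde\Delta_H f.
\]
The third is a trivial consequence of the second, so really only the first two equalities need to be established.

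First I would prove $\tilde\Lambda_{\omega_g}(\ddbar f)_V=\Delta_V f$. Recall that $\tilde\Lambda_{\omega_g}\alpha_V$ was defined by $\tilde\Lambda_{\omega_g}\alpha_V\wedge((\omega_g)_V)^{r-1}=(r-1)\alpha_V\wedge((\omega_g)_V)^{r-2}$, and after wedging with $\pi^*\omega^m$ this reads $(\tilde\Lambda_{\omega_g}\alpha_V)\,\omega_g^{m+r-1}\wedge\pi^*\omega^m/\dots$ — more precisely, wedging the defining relation with $\pi^*\omega^m$ and using that the horizontal degree is already maximal gives $\tilde\Lambda_{\omega_g}\alpha_V\,\omega_g^{r-1}\wedge\pi^*\omega^m=(r-1)\alpha_V\wedge\omega_g^{r-2}\wedge\pi^*\omega^m$ (only the vertical part of $\omega_g^{r-1}$ contributes against a vertical form once we wedge with $\pi^*\omega^m$, which already saturates the horizontal directions). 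Comparing with the first line of Definition~\ref{def2}, namely $(r-1)\ddbar f\wedge\omega_g^{r-2}\wedge\pi^*\omega^m=\Delta_V f\,\omega_g^{r-1}\wedge\pi^*\omega^m$, and noting that $(\ddbar f)_V$ is exactly the component of $\ddbar f$ that survives when wedged against $\omega_g^{r-2}\wedge\pi^*\omega^m$ (mixed and horizontal parts wedge to zero here), we get $\tilde\Lambda_{\omega_g}(\ddbar f)_V=\Delta_V f$.

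Next I would prove $\Lambda_\omega(\ddbar f)_H=\tilde\Delta_H f$. By the convention fixed just before Lemma~\ref{lem1}, for a horizontal form $\alpha=\pi^*\alpha_1$ we set $\Lambda_\omega\alpha=\pi^*(\Lambda_\omega\alpha_1)$, so I need to relate $(\ddbar f)_H$ to the second and third lines of Definition~\ref{def2}. Write $\ddbar f=(\ddbar f)_V+(\ddbar f)_H+(\ddbar f)_m$. Wedging the full form with $\omega_g^{r-1}\wedge\pi^*\omega^{m-1}$: the mixed part drops out by a type/degree count, and $\omega_g^{r-1}=(\omega_g)_V^{r-1}+(r-1)(\omega_g)_H\wedge(\omega_g)_V^{r-2}+\dots$, so one needs to be careful that $\omega_g^{r-1}$ has both a purely vertical piece and pieces with horizontal content. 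Matching the definition $m\,\ddbar f\wedge\omega_g^{r-1}\wedge\pi^*\omega^{m-1}=\Delta_H f\,\omega_g^{r-1}\wedge\pi^*\omega^m$ with the expansion of $\ddbar f\wedge\omega_g^{r-1}$ in terms of horizontal and vertical components, the vertical part $(\ddbar f)_V$ pairs with the $(r-1)(\omega_g)_H\wedge(\omega_g)_V^{r-2}$ term and contributes $f_1\Delta_V f$ (this is precisely where the correction term in $\tilde\Delta_H f=\Delta_H f-f_1\Delta_V f$ comes from, using~\eqref{3}), while the horizontal part $(\ddbar f)_H$ pairs with $(\omega_g)_V^{r-1}$ and contributes $\pi^*(\Lambda_\omega(\text{base form}))$. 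Subtracting, $\Lambda_\omega(\ddbar f)_H=\Delta_H f-f_1\Delta_V f=\tilde\Delta_H f$. Plugging all three identifications back into the specialized Lemma~\ref{lem1} yields the claimed asymptotic expansion. The main obstacle is purely bookkeeping: keeping the horizontal/vertical/mixed decomposition of $\omega_g^{r-1}$ and $\ddbar f$ straight so that the $f_1$-correction lands correctly and confirming that the "no mixed component of $\omega_g$" lemma kills exactly the terms it should; there is no analytic difficulty, only the combinatorics of wedging forms of mixed bidegree on the total space of the $\mathbb P^{r-1}$-bundle.
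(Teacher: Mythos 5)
Your proposal is correct and follows the same route as the paper: apply Lemma~\ref{lem1} to $\alpha=\ddbar f$ and then identify $\tilde\Lambda_{\omega_g}(\ddbar f)_V=\Delta_V f$ and $\tilde\Lambda_{\omega_g}(\ddbar f)_V\,f_1+\Lambda_\omega(\ddbar f)_H=\Delta_H f$ (hence $\Lambda_\omega(\ddbar f)_H=\tilde\Delta_H f$) by comparing the defining relations with Definition~\ref{def2}. Your extra bookkeeping about which components of $\omega_g^{r-1}$ pair with which components of $\ddbar f$ is just an expansion of the paper's ``similarly'' step and is accurate.
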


\begin{proof}
Let
$\alpha=\ddbar f$. Then we have
\[
\Delta_k f=\Lambda_{\omega_k}\alpha.
\]
By Lemma~\ref{lem1}, we have
\[
\Delta_k f=\tilde\Lambda_{\omega_{g}} \alpha_{V}+k^{-1}\Lambda_{\omega} \alpha_{H}+k^{-2}\big(2 \Lambda_{\omega}^2 ( \alpha \wedge \omega_{g})_{H}-(\Lambda_{\omega} \alpha_{H})f_{1}  \big)+O(k^{-3}).
\]
By Definition~\ref{def2}, we have
\[
(\tilde\Lambda_{\omega_g}\alpha_V)\omega_g^{r-1}\wedge\pi^*\omega^m=(r-1)\alpha\wedge\omega_g^{r-2}\wedge\pi^*\omega^m
=(\Delta_V f)\omega_g^{r-1}\wedge\pi^*\omega^m.
\]
Thus
\[
\tilde\Lambda_{\omega_g}\alpha_V=\Delta_V f.
\]
Similarly, we have
\[
\tilde\Lambda_{\omega_g}\alpha_V\, f_1+\Lambda_\omega\alpha_H=\Delta_H f.
\]
Thus we have
\[
\Lambda_\omega\alpha_H=\tilde\Delta_H f.
\]
The lemma is proved.

\end{proof}

\begin{proof}[Proof of Theorem \ref{thm3}]
we have the following exact sequence of holomorphic vector bundles on $\PE$.
$$0 \to V \to T\PE \to \pi^*TM \to 0. $$ The hermitian metric $h$ on $E$ induces a  Fubini-Study metric $h_{FS}$ on $V$. The positive $(1,1)$-forms $\omega_{k}$ and $(\omega_{g})_{H}+k\pi^*\omega$ induce hermitian metrics on vector bundles $T\PE$ and $\pi^*TM$ respectively.  As  holomorphic  hermitian vector bundles, the above exact sequence splits in the smooth category:
$$(T\PE, \omega_{k})=(V, h_{FS}) \bigoplus (\pi^* TM, (\omega_{g})_{H}+k\pi^*\omega)$$ and in addition, we have  $$\Ric(\omega_{k})= \textup{Tr}(i F_{h_{FS}})+\Ric((\omega_{g})_{H}+k\pi^*\omega).$$
On the other hand, we have the following Euler sequence of holomorphic vector bundles on $\PE$.
$$0 \to \mathbb{C} \to \pi^*E^* \otimes \OPE \to V \to 0. $$ This gives the following isometric isomorphism of holomorphic line bundles on $\PE$. $$(\det(V),\det(h_{FS})) \cong (\det(\pi^*E \otimes \OPE), \det(\pi^*h \otimes \hat{h})). $$
Therefore (cf.~\eqref{gamma}), $\textup{Tr}(i F_{h_{FS}})=r \omega_{g}-\pi^*\textup{Tr}(iF_{h})$, and we have $$\Ric(\omega_{k})= r\omega_{g}+\Ric((\omega_{g})_{H}+k\pi^*\omega)-\pi^*\textup{Tr}(iF_{h}).$$
On the other hand, by~\eqref{3}, we have
$$k^{-m}\big(  (\omega_{g})_{H} +k\pi^*\omega\big)^m= (1+k^{-1}f_{1}+ \cdots +k^{-m }f_{m}) \,\pi^*\omega^m.$$
As a result,
\begin{align*}
&\Ric((\omega_{g})_{H}+k\pi^*\omega)=  \ddbar \log ((\omega_{g})_{H} +k\pi^*\omega\big)^m\\
&=\ddbar \log (\sum_{j=0}^m
k^{-j}f_{j})+\pi^*(\Ric(\omega)).
\end{align*}
Consequently, \begin{equation}\label{eq5}\Ric\,(\omega_{k})= r\omega_{g}-\pi^*\textup{Tr}(iF_{h})+\pi^*(\Ric(\omega))+ \ddbar \log (\sum_{j=0}^mk^{-j}f_{j}).\end{equation} Taking trace of \eqref{eq5} with respect to $\omega_{k}$, we get
 \[
\textup{Scal}(\omega_k)=  \Lambda_{\omega_k}\alpha+\Delta_k\log (\sum_{j=0}^mk^{-j}f_{j}),
 \]
 where
 \[
 \alpha=\pi^*(\Ric(\omega)-\textup{Tr}(iF_h))+r\omega_g.
 \]
  Let
 \[
 b=\pi^*(S(\omega)-\Lambda_\omega(\textup{Tr}(iF_h))).
\]
 Using  Lemma \ref{lem1}, we get
 \begin{align*}
 &
 \Lambda_{\omega_k}\alpha=r(r-1)+k^{-1}(b+ rf_1)\\
 &
 +k^{-2}(2\Lambda_\omega^2(\pi^*(\Ric(\omega)-\textup{Tr}(iF_h))\wedge\omega_g)_H-f_1b-rf_1^2+2rf_2)+O(k^{-3}).
 \end{align*}
 By  Lemma \ref{lem2}, we have
\[
\Delta_k\log (\sum_{j=0}^mk^{-j}f_{j})=k^{-1}(\Delta_V f_1)+k^{-2}(\Delta_V(f_2-\frac 12 f_1^2)+\tilde\Delta_H f_1) +O(k^{-3}).
\]
Therefore, we have
\begin{align*}
&\textup{Scal}(\omega_{k})= r(r-1)+k^{-1}(b+ rf_1+\Delta_V f_1)\\
&+k^{-2}(2
\Lambda_\omega^2(\pi^*(\Ric(\omega)-\textup{Tr}(iF_h))\wedge\omega_g)_H-f_1b\\
&+
\Delta_V(f_2-\frac 12 f_1^2)+\tilde\Delta_H f_1-rf_{1}^2+2rf_{2})+O(k^{-3}).
\end{align*}
On the other hand, by the discussion at the end of the last section, we have $\Delta_{V}f_{1}=rf_{1}-\Lambda_\omega \textup{Tr}(iF_{h}).$ This concludes the proof.

\end{proof}

An easy computation shows the following

\begin{cor}[c.f. \cite{H4}]\label{cor1}
Suppose that $h$ is a Hermitian-Einstein metric on $E$ with respect to $\omega$, i.e. $\Lambda_{\omega} (iF_{h})=\mu I_{E}$, where
$\mu$ is the $\omega-$slope of the bundle $E$. Then for any $x\in M$, we have
 \begin{align*}
 &\frac{1}{(2\pi)^{r-1}}\int_{\mathbb PE^*_x} \textup{Scal}(\omega_{k})\, \omega_{g}^{r-1}= C(k)+k^{-1}S(\omega)+k^{-2}\Big(\frac{2}{r} \Lambda^2_\omega (\textup{Ric}(\omega)\wedge \textup{Tr}(iF_{h}))-\\&\frac{2}{r(r+1)} \Lambda^2_\omega (\textup{Tr}(iF_{h})\wedge \textup{Tr}(iF_{h}))+\frac{2}{r+1}\Lambda^2_\omega \textup{Tr}(iF_{h}\wedge iF_{h})-\mu S(\omega)\Big)+O(k^{-3}),\end{align*} where $C(k)$ is a constant depends on $k$.

\end{cor}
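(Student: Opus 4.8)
The plan is to integrate the asymptotic expansion for $\textup{Scal}(\omega_k)$ in Theorem~\ref{thm3} over the fiber $\mathbb{P}E_x^*$ against $\omega_g^{r-1}$, term by term in powers of $k^{-1}$, using the hypothesis $\Lambda_\omega(iF_h)=\mu I_E$ to simplify. The leading term $r(r-1)$ integrates to a constant (absorbed into $C(k)$), as does any term that is a fiber-constant; the point is to track the $k^{-1}$ and $k^{-2}$ coefficients carefully. First I would recall that for a traceless endomorphism $\Phi$, the fiber function $\textup{Tr}(\lambda(h)\Phi)$ is an eigenfunction for the first nonzero eigenvalue and hence integrates to zero over $\mathbb{P}E_x^*$ (by orthogonality to constants), which kills the $2r\Lambda_\omega(\textup{Tr}(\lambda(h)F_h^\circ))$ term at order $k^{-1}$; what survives is $\pi^*S(\omega)$, giving the stated $k^{-1}S(\omega)$.

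At order $k^{-2}$ I would dispose of the Laplacian terms first: $\int_{\mathbb{P}E_x^*}\Delta_V(f_2-\tfrac12 f_1^2)\,\omega_g^{r-1}=0$ since $\Delta_V$ is the fiber Laplacian and integration by parts over the closed fiber annihilates it. For the remaining terms I need the fiber integrals of $f_1$, $f_1^2$, $f_2$, $\tilde\Delta_H f_1$, and of $2\Lambda_\omega^2((\pi^*(\Ric(\omega)-\textup{Tr}(iF_h))\wedge\omega_g)_H)$. By Lemma~\ref{lem22}, $f_1([v])=\Lambda_\omega(\sqrt{-1}\,\textup{Tr}(\lambda(v,h)F_h))$, so $f_1=\Lambda_\omega(\sqrt{-1}\,\textup{Tr}(\lambda(h)F_h^\circ))+\tfrac1r\Lambda_\omega\textup{Tr}(iF_h)$; under the Hermitian–Einstein condition the trace part is just $\mu m$ (a constant), while the traceless part integrates to zero. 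Thus $\tfrac1{(2\pi)^{r-1}}\int f_1\,\omega_g^{r-1}$ is a constant, contributing to $C(k)$, and likewise $\tilde\Delta_H f_1$ and the $2rf_2$-type terms require computing the fiber averages $\langle f_1^2\rangle$ and $\langle f_2\rangle$. For these I would use the normalization constant $C_r$ from \eqref{eq3} and the fact that, over a point with a normal frame, $\omega_g|_{\mathbb{P}E_x^*}$ is the Fubini–Study metric on $\mathbb{P}^{r-1}$ scaled appropriately; the averages of products of two entries $\lambda(v,h)_{i\bar j}$ over projective space give the combinatorial factors $\tfrac1{r(r+1)}$ and $\tfrac2{r+1}$ that appear in the statement (these are the standard $\int_{\mathbb{P}^{r-1}}$ moments of the coefficients of $\lambda(v,h)$).

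The main obstacle — really the only nontrivial computation — is evaluating the fiber averages of the quadratic-in-curvature quantities: $\langle f_1^2\rangle$, $\langle f_2\rangle$, and the horizontal term $\Lambda_\omega^2((\cdots\wedge\omega_g)_H)$, all of which reduce via Lemma~\ref{lem22} and \eqref{gamma} to integrals over $\mathbb{P}^{r-1}$ of products $\lambda(v,h)_{i\bar j}\lambda(v,h)_{k\bar l}$ against the Fubini–Study volume. One evaluates $\int_{\mathbb{P}^{r-1}}\lambda_{i\bar j}\lambda_{k\bar l}\,\omega_{FS}^{r-1}/(r-1)!$ by $U(r)$-equivariance: the result must be a linear combination of $\delta_{ij}\delta_{kl}$ and $\delta_{il}\delta_{kj}$, and contracting with $\delta_{jk}$ (using $\textup{Tr}\,\lambda=1$) plus computing one explicit diagonal integral via \eqref{eq3} pins down both coefficients. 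Assembling: the $iF_h\wedge iF_h$ piece yields $\tfrac2{r+1}\Lambda_\omega^2\textup{Tr}(iF_h\wedge iF_h)$, the cross terms with $\Ric(\omega)$ and with $\textup{Tr}(iF_h)$ give the $\tfrac2r$ and $-\tfrac2{r(r+1)}$ coefficients respectively, the $-f_1 b$ term combined with $b=\pi^*(S(\omega)-m\mu)$ (constant in the fiber direction after using Hermitian–Einstein) contributes the $-\mu S(\omega)$ term (modulo constants absorbed in $C(k)$), and all purely-constant fiber contributions collect into $C(k)$. Then normalizing $\int_{\mathbb{P}E_x^*}\omega_g^{r-1}=(2\pi)^{r-1}/(r-1)!\cdot(r-1)!=(2\pi)^{r-1}$ up to the $C_r$ factor gives the prefactor $\tfrac1{(2\pi)^{r-1}}$, completing the proof.
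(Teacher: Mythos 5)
Your proposal is correct and is precisely the ``easy computation'' the paper leaves implicit after Theorem~\ref{thm3}: integrate the expansion fiberwise against the probability measure $\omega_g^{r-1}/(2\pi)^{r-1}$, discard the $\Delta_V$-exact terms and the terms of the form $\textup{Tr}(\lambda(h)\Phi)$ with $\Phi$ traceless, and reduce $\langle f_1^2\rangle$, $\langle f_2\rangle$ and the horizontal term (via $(\omega_g)_H=\pi^*\beta$) to the second moments $\int_{\mathbb{P}^{r-1}}\lambda_{i\bar j}\lambda_{k\bar l}=\bigl(\delta_{ij}\delta_{kl}+\delta_{il}\delta_{kj}\bigr)/\bigl(r(r+1)\bigr)$, exactly as you describe. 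One small slip to fix: $\Lambda_\omega\textup{Tr}(iF_h)=\textup{Tr}(\mu I_E)=r\mu$, not $m\mu$, so the fiber average of $f_1$ is $\mu$ and $b=\pi^*(S(\omega)-r\mu)$; this is what makes $-\langle f_1 b\rangle=-\mu S(\omega)+r\mu^2$ and $-r\langle f_1^2\rangle=-r\mu^2$ combine to give the stated $-\mu S(\omega)$ with the remaining constants absorbed into $C(k)$.
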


\section{Construction of approximate solutions}

In this section, we first compute the linearization of the scalar curvature operator at  the K\"ahler metrics $\omega_{k}$.

\begin{prop}{\cite{F}}\label{prop0}
Let $(Y,\omega)$ be a K\"ahler manifold of dimension $n$. Then the linearization of the scalar curvature operator at the K\"ahler metric $\omega$ is given by the following formula.
$$L(\phi)=(\Delta^2-S(\omega) \Delta)\phi+n(n-1)\frac{\ddbar \phi \wedge \textup{\Ric}(\omega) \wedge \omega^{n-2} }{\omega^n},$$
where $\phi$ is a smooth function on $Y$.

\end{prop}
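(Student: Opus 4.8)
The statement is the classical formula for the first variation of the scalar curvature along a path of Kähler metrics (see \cite{F}), so I only describe the route I would follow. Fix $\phi\in\mathcal C^\infty(Y)$ and put $\omega_t=\omega+t\,\ddbar\phi$, a Kähler form for $|t|$ small, so that $L(\phi)=\frac{d}{dt}\big|_{t=0}S(\omega_t)$. Writing the scalar curvature as the contraction $S(\omega_t)=\Lambda_{\omega_t}\Ric(\omega_t)$ and differentiating by the product rule, one has to control two things: the variation of the Ricci form $\Ric(\omega_t)$ viewed as a $(1,1)$-form, and the variation of the contraction operator $\beta\mapsto\Lambda_{\omega_t}\beta$ evaluated at the fixed form $\beta=\Ric(\omega)$.

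For the Ricci term I would use $\Ric(\omega_t)=\ddbar\log(\omega_t^n/\mu)$ for a fixed local volume $\mu$, together with the elementary expansion $\omega_t^n=(1+t\,\Delta\phi)\,\omega^n+O(t^2)$ obtained from $n\,\ddbar\phi\wedge\omega^{n-1}=(\Delta\phi)\,\omega^n$ (Definition \ref{def1} with $j=1$), where $\Delta=\Lambda_\omega\ddbar$ is the Laplacian of the statement. Expanding the logarithm and applying $\ddbar$ gives $\frac{d}{dt}\big|_{0}\Ric(\omega_t)=\ddbar\,\Delta\phi$, hence a contribution $\Lambda_\omega\ddbar\,\Delta\phi=\Delta^2\phi$ to $\frac{d}{dt}\big|_0 S(\omega_t)$. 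For the contraction term, differentiate the defining identity $n\,\beta\wedge\omega_t^{n-1}=(\Lambda_{\omega_t}\beta)\,\omega_t^n$ at $t=0$ with $\beta=\Ric(\omega)$ held fixed; using $n(n-1)\,\beta\wedge\ddbar\phi\wedge\omega^{n-2}=2\Lambda_\omega^2(\beta\wedge\ddbar\phi)\,\omega^n$ (Definition \ref{def1} with $j=2$) together with $\Lambda_\omega\Ric(\omega)=S(\omega)$, one obtains
\[
\frac{d}{dt}\Big|_{0}\Lambda_{\omega_t}\Ric(\omega)=2\Lambda_\omega^2\big(\Ric(\omega)\wedge\ddbar\phi\big)-S(\omega)\,\Delta\phi .
\]

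Adding the two contributions gives $L(\phi)=\Delta^2\phi-S(\omega)\,\Delta\phi+2\Lambda_\omega^2\big(\ddbar\phi\wedge\Ric(\omega)\big)$, and the last term is rewritten as $n(n-1)\,\frac{\ddbar\phi\wedge\Ric(\omega)\wedge\omega^{n-2}}{\omega^n}$, once more by Definition \ref{def1} (with $N=n$, $j=2$), which is the asserted formula. The argument as a whole is just a first-order Taylor expansion of $\omega_t^n$ and $\omega_t^{n-1}$ followed by linear algebra on $\bigwedge^{1,1}T^*Y$, so there is no analytic difficulty; the one place where care is genuinely needed — and where an off-by-sign slip is easiest — is the consistent bookkeeping of the sign conventions ($\ddbar=\sqrt{-1}\bar\partial\partial$, the sign of $\Delta=\Lambda_\omega\ddbar$, and the Kähler identity $2\Lambda_\omega^2(\alpha\wedge\beta)=(\Lambda_\omega\alpha)(\Lambda_\omega\beta)-\langle\alpha,\beta\rangle$ for real $(1,1)$-forms that is implicit above). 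For the detailed verification I refer to \cite{F}.
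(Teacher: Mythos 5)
Your derivation is correct, and it matches the standard computation; the paper itself offers no proof of this proposition, citing Fine \cite{F} instead, so there is nothing internal to compare against. Concretely: writing $\omega_t=\omega+t\,\ddbar\phi$, your two ingredients are exactly right with the paper's conventions ($\Ric(\omega)=\ddbar\log\det g$, $\Delta=\Lambda_\omega\ddbar$, and $\Lambda^j_\omega$ as in Definition \ref{def1}): the expansion $\omega_t^n=(1+t\Delta\phi)\omega^n+O(t^2)$ gives $\tfrac{d}{dt}\big|_0\Ric(\omega_t)=\ddbar\Delta\phi$ and hence the $\Delta^2\phi$ term, while differentiating $n\,\beta\wedge\omega_t^{n-1}=(\Lambda_{\omega_t}\beta)\,\omega_t^n$ at $\beta=\Ric(\omega)$ produces $n(n-1)\,\ddbar\phi\wedge\Ric(\omega)\wedge\omega^{n-2}/\omega^n-S(\omega)\Delta\phi$, which sums to the asserted formula. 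Two small remarks: the Kähler identity $2\Lambda_\omega^2(\alpha\wedge\beta)=(\Lambda_\omega\alpha)(\Lambda_\omega\beta)-\langle\alpha,\beta\rangle$ that you call ``implicit'' is in fact not needed anywhere in your argument (it would only be used to convert the last term into the more familiar $\Delta^2\phi-\langle\Ric,\ddbar\phi\rangle$ form), and your phrase ``fixed local volume $\mu$'' is harmless since Ricci curvature is local and the globally defined identity $\Ric(\omega_t)=\Ric(\omega)+\ddbar\log(\omega_t^n/\omega^n)$ does the same job.
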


Applying the above proposition to $(\PE, \omega_{k})$, we obtain the following.

\begin{prop}\label{prop1}
Let $L_{k}$ be the linearization of the  scalar curvature operator at K\"ahler metrics $\omega_{k}$. Then we have the following

$$L_{k}=\Delta_{V}(\Delta_{V}-r)+O(k^{-1}).$$

\end{prop}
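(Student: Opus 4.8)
The plan is to combine Proposition~\ref{prop0} with the asymptotic expansions already established in Section~3. We apply Proposition~\ref{prop0} with $(Y,\omega)=(\PE,\omega_k)$ and $n=m+r-1$. This gives
\[
L_k(\phi)=(\Delta_k^2-\textup{Scal}(\omega_k)\,\Delta_k)\phi+(m+r-1)(m+r-2)\frac{\ddbar\phi\wedge\Ric(\omega_k)\wedge\omega_k^{m+r-2}}{\omega_k^{m+r-1}}.
\]
The strategy is to expand each of the three terms in powers of $k^{-1}$ and collect the leading ($k^0$) contribution. For the first two terms this is immediate from the results already in hand: by Lemma~\ref{lem2}, $\Delta_k=\Delta_V+O(k^{-1})$, so $\Delta_k^2=\Delta_V^2+O(k^{-1})$; and by Theorem~\ref{thm3}, $\textup{Scal}(\omega_k)=r(r-1)+O(k^{-1})$, so $\textup{Scal}(\omega_k)\Delta_k=r(r-1)\Delta_V+O(k^{-1})$. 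Hence the first two terms contribute $\Delta_V^2-r(r-1)\Delta_V=\Delta_V(\Delta_V-r)+\Delta_V(\Delta_V-r(r-1))-\Delta_V(\Delta_V-r)$; more precisely they give $\Delta_V^2-r(r-1)\Delta_V+O(k^{-1})$.

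The remaining task is to show that the curvature term $(m+r-1)(m+r-2)\,\dfrac{\ddbar\phi\wedge\Ric(\omega_k)\wedge\omega_k^{m+r-2}}{\omega_k^{m+r-1}}$ contributes exactly the missing piece, namely $r\Delta_V$ at order $k^0$, so that the total becomes $\Delta_V^2-r(r-1)\Delta_V+r\Delta_V=\Delta_V(\Delta_V-r)$. To handle this term I would rewrite it as a contraction: it equals $\Lambda_{\omega_k}^2(\ddbar\phi\wedge\Ric(\omega_k))$ in the notation of Definition~\ref{def1} (up to the combinatorial constant, which matches). Using~\eqref{eq5}, $\Ric(\omega_k)=r\omega_g+\pi^*(\Ric(\omega)-\textup{Tr}(iF_h))+\ddbar\log(\sum k^{-j}f_j)$, the dominant part is $r\omega_g$ since the $\pi^*$-term is $O(k^{-1})$ after contraction (it is horizontal, cf.\ Lemma~\ref{lem1}) and the $\ddbar\log$-term is also $O(k^{-1})$. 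So to leading order this term is $(m+r-1)(m+r-2)\,r\,\dfrac{\ddbar\phi\wedge\omega_g\wedge\omega_k^{m+r-2}}{\omega_k^{m+r-1}}$. I would then expand $\omega_k^{m+r-2}=(\omega_g+k\pi^*\omega)^{m+r-2}$ and compare with $\omega_k^{m+r-1}$ in the denominator, keeping track of the vertical/horizontal bidegree: the form $\ddbar\phi\wedge\omega_g\wedge\omega_k^{m+r-2}$ has enough horizontal content forced on it that, dividing by $\omega_k^{m+r-1}$, the leading term is of order $k^0$ and reduces to a fiberwise computation identifying it with $\Delta_V\phi$ (with the right multiple). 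Concretely, on the fiber $\omega_g|_{\PE_x}$ is the Fubini–Study metric, and the elementary identity $\dfrac{(m+r-1)(m+r-2)\,\ddbar\phi\wedge\omega_g\wedge\omega_g^{r-2}\wedge\pi^*\omega^m}{\omega_g^{r-1}\wedge\pi^*\omega^m}$ relates to $(r-1)\Delta_V\phi/(r-1)=\Delta_V\phi$ after accounting for the binomial coefficients, yielding the coefficient $r$.

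The main obstacle I anticipate is purely bookkeeping: carefully matching the combinatorial constants in Proposition~\ref{prop0} (which are written with the $\omega^n/\omega^n$ normalization) against the $\Lambda_\omega$, $\Delta_V$, $f_j$ normalizations of Section~2, and making sure that the horizontal vs.\ vertical bidegree counting forces exactly the claimed power of $k$ — in particular that no unexpected $k^0$ contribution survives from the $\pi^*(\Ric(\omega)-\textup{Tr}(iF_h))$ or $\ddbar\log(\sum k^{-j}f_j)$ pieces of $\Ric(\omega_k)$, and that the $\Delta_k^2$ cross terms genuinely drop to $O(k^{-1})$. Once the leading-order curvature term is identified as $r\Delta_V\phi$, the result $L_k=\Delta_V(\Delta_V-r)+O(k^{-1})$ follows by adding the three contributions. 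I would present the computation fiberwise at a point $x$ with a normal frame for $h$ (as in the proof of Lemma~\ref{lem22}), so that $\omega_g=\pi^*\beta+\omega_g|_{\PE_x}$ and the mixed terms vanish to the relevant order, reducing everything to the model case of $\mathbb{P}^{r-1}$ with its Fubini–Study metric where the eigenvalue $r$ of $\Delta_V$ on $W$ makes the structure of $\Delta_V(\Delta_V-r)$ transparent.
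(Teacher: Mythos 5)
Your overall strategy is exactly the paper's: apply Proposition~\ref{prop0} to $(\PE,\omega_k)$ with $n=m+r-1$, note that $\Delta_k=\Delta_V+O(k^{-1})$ and $\textup{Scal}(\omega_k)=r(r-1)+O(k^{-1})$, and then identify the leading term of the Ricci contribution using $\Ric(\omega_k)=r\omega_g+(\text{horizontal and lower-order terms})$ from \eqref{eq5}. However, the one genuinely nontrivial step --- the evaluation of the curvature term --- is carried out incorrectly, and the error is masked by a compensating arithmetic slip. You assert that the curvature term contributes $r\Delta_V$ and that $\Delta_V^2-r(r-1)\Delta_V+r\Delta_V=\Delta_V(\Delta_V-r)$; but $-r(r-1)+r=-r(r-2)$, so this identity is false for general $r$. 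The contribution actually needed to close the gap between $-r(r-1)\Delta_V$ and $-r\Delta_V$ is $r(r-2)\Delta_V$, which is precisely what the correct computation (and the paper) gives: with $n=m+r-1$ the curvature term is
\[
(m+r-1)(m+r-2)\,\frac{\ddbar\phi\wedge\Ric(\omega_k)\wedge\omega_k^{m+r-3}}{\omega_k^{m+r-1}},
\]
and extracting the $k^0$ coefficient via $\omega_k^{m+r-1}\sim\binom{m+r-1}{m}k^m\omega_g^{r-1}\wedge\pi^*\omega^m$ and $\ddbar\phi\wedge r\omega_g\wedge\omega_k^{m+r-3}\sim r\binom{m+r-3}{m}k^m\,\ddbar\phi\wedge\omega_g^{r-2}\wedge\pi^*\omega^m$ yields
\[
(m+r-1)(m+r-2)\cdot r\cdot\frac{(r-1)(r-2)}{(m+r-1)(m+r-2)}\cdot\frac{1}{r-1}\,\Delta_V\phi=r(r-2)\,\Delta_V\phi .
\]

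Two further points feed into this. First, you wrote the exponent in the numerator as $\omega_k^{m+r-2}$; since $n-2=m+r-3$, that form has bidegree $(m+r,m+r)$ on the $(m+r-1)$-dimensional $\PE$ and is identically zero, and the same degree problem afflicts your ``elementary identity'' $\ddbar\phi\wedge\omega_g\wedge\omega_g^{r-2}\wedge\pi^*\omega^m$ divided by $\omega_g^{r-1}\wedge\pi^*\omega^m$. Second, the claim that the fiberwise normalization yields the coefficient $r$ is never actually computed --- it is exactly the bookkeeping you flag as the ``main obstacle,'' and it is where the answer differs from what you guessed. Your reduction of the horizontal pieces $\pi^*(\Ric(\omega)-\textup{Tr}(iF_h))$ and $\ddbar\log(\sum k^{-j}f_j)$ to $O(k^{-1})$ is fine. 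If you replace the claimed $r\Delta_V$ by $r(r-2)\Delta_V$ and fix the exponents, the three contributions sum to $\Delta_V^2-r(r-1)\Delta_V+r(r-2)\Delta_V=\Delta_V(\Delta_V-r)$ and the proof matches the paper's.
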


\begin{proof} By~\eqref{eq5}, we have
\[
\ddbar \phi \wedge \textup{\Ric}(\omega_k) \wedge \omega_k^{n-2}=C_{r-3+n}^n\ddbar \phi \wedge\omega_g\wedge\pi^*\omega^n+O(k^{n-1}).
\]
Since $\textup{Scal}(\omega_k)=r(r-1)+O(k^{-1})$ by Theorem~\ref{thm3}, we have
\[
(n+r-1)(n+r-2)\frac{\sqrt{-1}\bar\pa\pa\phi\wedge\textup{Ric}(\omega_k)\wedge\omega_k^{n+r-3}}
{\omega_k^{n+r-1}}=r(r-2)\Delta_V+O(k^{-1}).
\]
The result follows from Proposition~\ref{prop0}.

\end{proof}







We make the following definition of a holomorphic vector field. Let $X$ be a $(1,0)$-vector field such that $\bar\pa X=0$. Then $X+\bar X$ is a real vector field and it is called a holomorphic vector field. A holomorphic vector field generates a one-parameter group of holomorphic automorphisms.

Let $\omega_{\infty}$ be an extremal metric on $M$ and $X_{s}$ be the holomorphic vector field such that  $d S(\omega_{\infty})= \iota_{X_{s}}\omega_{\infty}$.

Let $G=\textup{Ham}(M,\omega_{\infty})$ be the group of Hamiltonian isometries of $(M,\omega_{\infty})$ and $\Lie { g}$ be its Lie algebra. Let $G_{E}$ be the subgroup of all Hamiltonian isometries of $(M,\omega)$ that can be lifted to  automorphisms of $\PE$ and let $\Lie{g}_{E} $ be its  Lie algebra.
$\Lie{g}_{E} $ is the space of  holomorphic vector fields $X$ on $M$ such that
\begin{enumerate}
\item there exist holomorphic vector fields $\tilde X$ of $\PE$ such that $\pi_*\tilde X=X$;
\item there exist real valued   functions $f$ such that $d f=\iota_{X}\omega_\infty$.
\end{enumerate}

Let $\mathfrak h$ be a Lie sub algebra of $\mathfrak g$. We denote the space of all Hamiltonians  (including constant functions) whose gradient vector fields are in $\Lie{h}$ by $\bar{\Lie{h}}$.
Fix $T\subseteq G_{E}$  a maximal torus and $K \subseteq G$  the subgroup of all elements in $G$ that commute with $T$. Let $\Lie{t}$ and $\Lie{k}$  be the Lie algebras of $T$ and $K$ respectively. Suppose that $b \in \bar{\Lie{k}}$. By definition, there exists a holomorphic vector field $X$ on $M$ such that $d b=\iota_{X}\omega_{\infty}$. If we further assume that $b \in \bar{\Lie{t}}$, then  there exists a unique holomorphic vector field $\tilde{X}$ on $\PE$ such that $\pi_{*}\tilde{X}=X$.
As a result,
we are able to define the Hamiltonian  functions $l_{k}(b)$ on $\PE$ such that
$kd(l_{k}(b))=\iota_{\tilde{X}}\omega_{k}$. However, if $b$ does not belong to $\bar{\Lie{t}}$, then the corresponding holomorphic vector field does not lift to a holomorphic vector field on $\PE$.  Nevertheless, we are still able to define $l_k(b)$. In order to do that,  we use the following proposition proved in \cite{H4}.


\begin{prop}\label{prop3}
For any holomorphic vector field $X$ on $M$, there exists a unique smooth $u_{X} \in \Gamma(\textup{End}(E))$ such that
\begin{align*}
&\Lambda_{\omega_{\infty}} \pa ( \bar{\partial}u_{X}-\iota_{X}F_{h})=0,\\
&
\int_{M} \textup{tr}(u_{X})\omega_{\infty}^m=0.
\end{align*}
Moreover, there exists a holomorphic vector field $\tilde{X}$ on $\PE$ such that $\pi_{*}\tilde{X}=X$ if and only if $\bar{\partial}u_{X}-\iota_{X}F_{h}=0$.

\end{prop}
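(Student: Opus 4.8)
\emph{Proposed proof.} The plan is to view the two conditions on $u_X$ as a solvable linear elliptic system and to read the lifting criterion off the classical theory of the horizontal lift, which is essentially the argument of \cite{H4}. First consider the second-order operator $L(u)=\Lambda_{\omega_\infty}\pa(\bar\partial u)$ on $\Gamma(\textup{End}(E))$, where $\pa$ is the $(1,0)$-part of the Chern connection of the metric induced by $h$. Since $h$ is Hermitian-Einstein, $\Lambda_{\omega_\infty}(\sqrt{-1}F_h)=\mu\,I_E$, so the mean curvature of $\textup{End}(E)$, which acts on sections by the commutator $[\Lambda_{\omega_\infty}\sqrt{-1}F_h,\,\cdot\,]$, vanishes; hence the zeroth-order term in the Bochner-Kodaira identity drops out and $L$ coincides, up to a nonzero constant, with the complex Laplacian $\bar\partial^{*}\bar\partial$ on $\Gamma(\textup{End}(E))$. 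It follows that $L$ is elliptic and Fredholm, that $\langle L u,u\rangle_{L^{2}}$ is a constant multiple of $\|\bar\partial u\|^{2}$, and that $\ker L=H^{0}(M,\textup{End}(E))$; since a Mumford stable bundle is simple, $\ker L=\mathbb C\cdot I_E$, and the cokernel is spanned by $I_E$ as well.

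For the existence and uniqueness of $u_X$, the equation $\Lambda_{\omega_\infty}\pa(\bar\partial u_X-\iota_X F_h)=0$ reads $L u_X=\psi$ with $\psi:=\Lambda_{\omega_\infty}\pa(\iota_X F_h)$, which by the above is solvable precisely when $\int_M\textup{tr}(\psi)\,\omega_\infty^m=0$. I would check this directly: $\textup{tr}(\psi)=\Lambda_{\omega_\infty}\pa\,\iota_X(\textup{tr}\,F_h)$, and since $\textup{tr}\,F_h$ is $d$-closed and $X$ is holomorphic, $\pa\,\iota_X(\textup{tr}\,F_h)$ is the $(1,1)$-component of the exact form $d(\iota_X\textup{tr}\,F_h)$, so wedging with $\omega_\infty^{m-1}$ and using $d\omega_\infty=0$ together with Stokes makes the integral vanish. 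Granting this routine computation, $L u_X=\psi$ is solvable, uniquely modulo $\mathbb C\cdot I_E$, and the normalization $\int_M\textup{tr}(u_X)\,\omega_\infty^m=0$ pins down a unique $u_X$.

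For the ``moreover'' part, use the Chern connection of $h$ to horizontally lift $X$ to a fibrewise-linear $(1,0)$ vector field $X^{\mathrm h}$ on the total space of $E$ covering $X$; one computes that its obstruction to being holomorphic is $\bar\partial X^{\mathrm h}=\iota_X F_h$, where a section of $\textup{End}(E)$ is regarded as a fibrewise-linear vertical field. Thus $\bar\partial(X^{\mathrm h}-v)=\iota_X F_h-\bar\partial v$ for $v\in\Gamma(\textup{End}(E))$. If $\bar\partial u_X-\iota_X F_h=0$, then $X^{\mathrm h}-u_X$ is a fibrewise-linear holomorphic vector field on $E$ covering $X$, hence, dualizing to $E^{*}$ and projectivizing, it descends to a holomorphic $\tilde X$ on $\PE$ with $\pi_*\tilde X=X$. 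Conversely, given such a $\tilde X$, lift it along $E^{*}\setminus\{0\}\to\PE$ to a $\mathbb C^{*}$-invariant holomorphic field, extend this over $E^{*}$ (the zero section has codimension $\ge 2$) and dualize, obtaining a fibrewise-linear holomorphic vector field $Y$ on $E$ covering $X$; then $v:=X^{\mathrm h}-Y\in\Gamma(\textup{End}(E))$ satisfies $\bar\partial v=\iota_X F_h$, so $v$ solves the first equation of the proposition, whence $v-u_X\in\ker L=\mathbb C\cdot I_E$, and since $\bar\partial$ kills $I_E$ we get $\bar\partial u_X-\iota_X F_h=\bar\partial v-\iota_X F_h=0$.

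I expect the main obstacle to be a single bookkeeping point, namely the first lift in the converse above: lifting $\tilde X$ from $\PE$ to $E^{*}\setminus\{0\}$ has an obstruction in $H^{1}(\PE,\mathcal O_{\PE})\cong H^{1}(M,\mathcal O_M)$ equal, up to a nonzero constant, to the Dolbeault class of $\iota_X(\textup{tr}\,F_h)$, so one must use that $X$ is a \emph{Hamiltonian} holomorphic vector field: then $X$ vanishes somewhere, which forces $\iota_X$ of every closed $(1,1)$-form to be $\bar\partial$-exact and kills the obstruction. An equivalent route is to split $u_X$ into trace-free and trace parts: the trace-free equation is handled because $L$ is injective on $\Gamma(\textup{End}_0(E))$ (again by simplicity of $E$), while the trace equation is a scalar Poisson equation whose solvability, and whose identification with $\bar\partial(\textup{tr}\,u_X)=\iota_X(\textup{tr}\,F_h)$, rest on the same exactness. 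By comparison, the elliptic theory of the first two paragraphs and the solvability integral in the second are routine.
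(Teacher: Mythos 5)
Your argument is correct, and its engine is the same as the paper's (and Hong's): lift $X$ horizontally via the Chern connection of $h$, observe that the obstruction to holomorphy of the horizontal lift is $\iota_X F_h$ acting as a fibrewise-linear vertical field, and correct by $u_X$. The paper's own proof is essentially only this step, carried out in local coordinates: it checks that the candidate field $X^\beta\partial_{z^\beta}+\iota_X(\theta_i^j)a_j\partial_{a_i}+(u_X)_i^ja_j\partial_{a_i}$ is globally defined and holomorphic precisely when $\bar\partial u_X-\iota_X F_h=0$, normalizes $\det E$ to be trivial to suppress the trace part, and quotes Hong for the existence of $u_X$. You go further in two places. (i) You supply the elliptic argument for $u_X$; here note that $\Lambda_{\omega_\infty}\partial\bar\partial u=\sqrt{-1}\,\bar\partial^*\bar\partial u$ on sections follows directly from the K\"ahler identity $[\Lambda,\partial]=\sqrt{-1}\,\bar\partial^*$ applied to the $(0,1)$-form $\bar\partial u$, so the Hermitian--Einstein hypothesis is not needed for that identification --- only simplicity is used, to get $\ker=\operatorname{coker}=\mathbb{C}I_E$; your Stokes verification of the integrability condition $\int_M\operatorname{tr}(\psi)\,\omega_\infty^m=0$ is right. (ii) You prove the ``only if'' direction by passing through $E^*\setminus\{0\}$ and comparing the resulting solution $v$ of $\bar\partial v=\iota_XF_h$ with $u_X$ via $\ker L$; a slightly shorter route, closer to the paper, is that simplicity gives $H^0(M,\operatorname{End}_0(E))=0$, so a holomorphic lift to $\PE$, if it exists, is unique and must equal the candidate field. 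Finally, the obstruction you flag in $H^1(\PE,\mathcal O)\cong H^1(M,\mathcal O)$ --- equivalently the trace equation $\bar\partial\operatorname{tr}(u_X)=\iota_X\operatorname{tr}(F_h)$, whose solvability is exactly the vanishing of the harmonic part of $\iota_X\operatorname{tr}(F_h)$ --- is a genuine point: the biconditional as literally stated needs either the determinant normalization the paper invokes or the hypothesis that $X$ is Hamiltonian (then $\eta(X)\equiv 0$ for every holomorphic $1$-form $\eta$, so $\iota_X$ of any closed $(1,1)$-form is $\bar\partial$-exact; this is standard but worth citing). Since the proposition is only ever applied to $X\in\Lie{g}$, your resolution is the appropriate one.
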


If $f \in\bar{\Lie{g}}_{E}$ and $X$ be the gradient vector field corresponding to $b$, we can explicitly compute $l_{k}(b)$ in terms of $u_{X}$. Indeed, we have the following.

\begin{lem}\label{lem3}
 Suppose that holomorphic vector field $X$ has a holomorphic lift $\tilde{X}$ to $\PE$, then
 $\iota_{\tilde{X}}\omega_{g}=d \theta_{X}$, where $\theta_{X}=\textup{Tr}(u_{X} \lambda(h))$. Moreover,
 if $f \in \bar{\Lie{g}}_{E}$ such that $d f=\iota_{X}\omega_{\infty}$, then $d(\theta_{X}+kf)=\iota_{\tilde{X}}\omega_{k}$.

\end{lem}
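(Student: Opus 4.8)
The plan is to verify the two claimed identities by a direct local computation, exploiting the fact that the whole problem is fibrewise and that on each fibre $\mathbb{P}E_x^*$ everything reduces to the model situation of Definition~\ref{def0} and equation~\eqref{12}. First I would prove $\iota_{\tilde X}\omega_g = d\theta_X$ with $\theta_X = \textup{Tr}(u_X\lambda(h))$. Recall from the Euler sequence that $\mathcal O_{\mathbb{P}E^*}(1)$ carries the metric $\widehat h$ whose curvature is $\omega_g$ (cf.~\eqref{gamma}), and that the lift $\tilde X$ of $X$ exists precisely when $\bar\partial u_X - \iota_X F_h = 0$ by Proposition~\ref{prop3}. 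The natural guess is that the lift is generated, in the fibre directions, by the endomorphism $u_X$ acting on $E$; concretely, $\tilde X$ is the sum of a horizontal lift of $X$ (using the Chern connection of $h$) and the vertical vector field induced by the holomorphic endomorphism $u_X$ of $E$ via the $\mathbb{P}E^*$-action. Then I would compute $\iota_{\tilde X}\omega_g$ by splitting $\omega_g = \pi^*\beta + \omega_g|_{\mathbb{P}E_x^*}$ as in~\eqref{gamma}: the vertical part contracts against the vertical component of $\tilde X$ to give $d_V\theta_X$ (this is the standard fact that on $\mathbb{P}V^*$ with the Fubini--Study form, the Hamiltonian of the vector field induced by $A\in\textup{End}_0(V)$ is $\textup{Tr}(A\lambda(h))$, extended here to all of $\textup{End}(E)$), while the cross terms, involving the horizontal part of $\tilde X$ paired against $\pi^*\beta$ and the mixed contributions coming from the connection, combine with the horizontal derivative of $\theta_X$ exactly by virtue of the identity $\bar\partial u_X = \iota_X F_h$. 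A clean way to organize this is to work at a point $x$ in a normal frame $\{e_1,\dots,e_r\}$ for $h$ (so $h_{i\bar j}(x)=\delta_{ij}$, $dh_{i\bar j}(x)=0$), where the connection terms vanish and the computation collapses to the fibrewise model plus a single term measuring the variation of $u_X$ horizontally.

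Once the first identity is established, the second is almost immediate: if $f\in\bar{\Lie g}_E$ with $df = \iota_X\omega_\infty$, then since $\omega_k = \omega_g + k\pi^*\omega_\infty$ and $\pi_*\tilde X = X$, we have
\[
\iota_{\tilde X}\omega_k = \iota_{\tilde X}\omega_g + k\,\iota_{\tilde X}\pi^*\omega_\infty = d\theta_X + k\,\pi^*(\iota_X\omega_\infty) = d\theta_X + k\,d(\pi^*f) = d(\theta_X + kf),
\]
using that contraction commutes with pullback along $\pi$ in the sense that $\iota_{\tilde X}\pi^*\omega_\infty = \pi^*(\iota_X\omega_\infty)$ because $\tilde X$ projects to $X$ and $\pi^*\omega_\infty$ is a pullback form.

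The main obstacle is the first identity, and within it the precise verification that the horizontal/mixed terms in $\iota_{\tilde X}\omega_g$ reconstitute $d_H\theta_X$. This is where the defining property $\bar\partial u_X = \iota_X F_h$ of the liftable case enters in an essential way: without it the horizontal lift of $X$ fails to be holomorphic and $\theta_X$ fails to be a global Hamiltonian, so the computation must be set up so that this condition is used exactly once, at the step where one differentiates $\theta_X = \textup{Tr}(u_X\lambda(h))$ in a horizontal direction and compares with the curvature term $\pi^*\beta = (\omega_g)_H$. I would also take care that $u_X$ is only fibrewise-Hermitian-adjusted (it is a genuine holomorphic section of $\textup{End}(E)$, not skew-Hermitian), so that $\theta_X$ is real-valued; this follows from the normalization in Proposition~\ref{prop3} together with the fact that $\lambda(h)$ is a self-adjoint rank-one projection, and should be recorded explicitly. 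The remaining bookkeeping — expressing the vertical Fubini--Study Hamiltonian of $u_X$ as $\textup{Tr}(u_X\lambda(h))$ — is routine and can be cited from the model $\mathbb{P}V^*$ case discussed after Definition~\ref{def3}.
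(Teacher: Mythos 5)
Your strategy is the right one, and it is essentially the proof the paper intends: the paper states Lemma~\ref{lem3} without proof, but its proof of Proposition~\ref{prop3} (given in the appendix) constructs the lift explicitly in local coordinates as the Chern-horizontal lift of $X$ plus the vertical field $(u_X)^j_i a_j\,\partial/\partial a_i$ induced by $u_X$; contracting that field against $\omega_g=(\omega_g)_H+(\omega_g)_V$ (which has no mixed component) in a normal frame for $h$ is exactly the computation you outline, with the fibrewise Fubini--Study moment map giving the vertical part and $\bar\partial u_X=\iota_XF_h$ reconciling the horizontal part with $(\omega_g)_H=\pi^*\beta$. The second identity is, as you say, immediate from the first.

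Two steps that you pass over quickly should be made honest. First, you introduce the decomposition of $\tilde X$ as ``the natural guess,'' but the hypothesis only asserts that \emph{some} holomorphic lift exists, while your computation is carried out for the specific field built from $u_X$. You need either to cite the construction in the proof of Proposition~\ref{prop3} (which shows that this specific field is a holomorphic lift precisely when $\bar\partial u_X-\iota_XF_h=0$) together with uniqueness of the lift --- which holds here because $E$ is Mumford stable, hence simple, so $\PE$ carries no nonzero vertical holomorphic vector fields --- or to restate the conclusion for the canonical lift. Second, your argument that $\theta_X$ is real does not work as stated: self-adjointness of $\lambda(h)$ only gives $\overline{\textup{Tr}(u_X\lambda(h))}=\textup{Tr}(u_X^{*}\lambda(h))$, so realness of $\theta_X$ is \emph{equivalent} to $u_X$ being $h$-Hermitian, and the trace normalization in Proposition~\ref{prop3} says nothing about this. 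The Hermitian property must be deduced from the uniqueness clause of Proposition~\ref{prop3} applied to a Killing field preserving the $T$-invariant Hermitian--Einstein metric (Lemma~\ref{lem5}); this is also exactly where the $(1,0)$ and $(0,1)$ pieces of $d_H\theta_X$ get matched with the two halves of $\iota_X\beta$, so it is the crux of the horizontal computation rather than a side remark. With these two points supplied, your argument is complete.
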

Inspired by the proceeding lemma, we define the lift of elements of $\bar{\mathfrak g}$ to $\PE$.

\begin{definition}\label{def4}
We define \begin{align*}   &l_{k}: \bar{\Lie{g}} \to \mathcal  C^{\infty}(\PE)\\& f \in \bar{\Lie{g}} \mapsto l_{k}(f)=f+k^{-1}\theta_{X}, \end{align*} where $X$ is the holomorphic vector field on $M$ such that $\iota_{X}\omega=d f$ and $\theta_{X}=\textup{Tr}(u_{X}\lambda(h))$.
\end{definition}

 Suppose that $X_{s} \in \Lie{t}$.  Then there exists a holomorphic vector field $\tilde{X_{s}}$ on $\PE$ so that $\pi_{*} \tilde{X_{s}}=X_{s}$. Moreover from the definition of the function $ f \to l_k(f)$, we conclude that $d l_{k}( S(\omega_{\infty}))=k^{-1} \iota_{\tilde{X_{s}}}\omega_{k}$.

 Let $A$ be a vector space on which the group $T$ acts. Let $A^T$ be the subspace of $T$ invariant elements of $A$.
 The main goal of this section is to prove the following proposition.

\begin{prop}\label{prop4}
Let $h_{\textrm{HE}}$ be the Hermitian-Einstein metric on $E$ with respect to $\omega_{\infty}$, i.e.
$\Lambda_{\omega_{\infty}} F_{(E,h_{\textrm{HE}})}=\mu I_{E}$,
where
$\mu$ is the slope of the bundle $E$. Then there exist  $\eta_{0}, \eta_{1}, \dots \in \mathcal C^{\infty}(M)^T$ , $\Phi_{0},\Phi_1,\dots \in \Gamma(M,W)^T$,
 $\varphi_{0}, \varphi_{1}, \dots \in \mathcal C^{\infty}(\PE)^T$ and
 $b_{0},b_{1}, \dots \in \bar{\Lie{k}}$ such that for any positive integer
$p$, if $$\varphi_{k,p}= \sum_{j=2}^{p}\eta_{j}k^{-j+2}+ \sum_{j=2}^{p}\Phi_{j}k^{-j+1}+ \sum_{j=2}^{p}\varphi_{j}k^{-j},$$ and $$b_{k,p}= \sum_{j=0}^{p} k^{-j}b_{j},$$ then $$S(\omega_{k}+\ddbar \varphi_{k,p})+\frac 12\langle \nabla l_{k}(b_{k,p}), \nabla \varphi_{k,p}\rangle-l_{k}(b_{k,p})=O(k^{-p-1}).$$ Here the gradient and inner product are computed with respect to the K\"ahler metrics $\omega_{k}$. Moreover $b_{0}=r(r-1)$ and $b_{1}=S(\omega_{\infty})$.

 \end{prop}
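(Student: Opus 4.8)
The plan is to construct the formal power series solution $\varphi_{k,p}$ and $b_{k,p}$ order by order in $k^{-1}$, using the invertibility (after projecting off the kernel) of the leading-order linearized operator computed in Proposition~\ref{prop1}. First I would set up the iteration: plugging the ansatz $\omega_k + \ddbar\varphi_{k,p}$ into the operator
\[
\mathcal{S}_k(\varphi,b) = S(\omega_k+\ddbar\varphi) + \tfrac12\langle \nabla l_k(b),\nabla\varphi\rangle - l_k(b),
\]
and expanding in powers of $k^{-1}$ using Theorem~\ref{thm3} for the scalar curvature, Lemma~\ref{lem1} and Lemma~\ref{lem2} for the various Laplacians, and Definition~\ref{def4} together with Lemma~\ref{lem3} for the lift $l_k(b)$. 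The zeroth order term of $S(\omega_k)$ is the constant $r(r-1)$, which forces $b_0 = r(r-1)$; the $O(k^{-1})$ term of the scalar curvature is $\pi^*S(\omega_\infty) + 2r\Lambda_\omega(\mathrm{Tr}(\lambda(h)F_h^\circ))$, but since $h = h_{\mathrm{HE}}$ is Hermitian--Einstein, $F_h^\circ$ (the traceless part of the curvature) contracts to something fiber-orthogonal, and one checks the fiber-constant part is exactly $S(\omega_\infty)$, giving $b_1 = S(\omega_\infty)$.

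The heart of the argument is the inductive step. Suppose we have chosen $\eta_j, \Phi_j, \varphi_j$ for $j < J$ and $b_j$ for $j < J$ so that $\mathcal{S}_k(\varphi_{k,J-1}, b_{k,J-1}) = k^{-J}\,\rho_J + O(k^{-J-1})$ for some error term $\rho_J \in \mathcal{C}^\infty(\PE)^T$. I want to kill $\rho_J$ by adjusting the next-order corrections. The three-tier structure $\eta_j k^{-j+2} + \Phi_j k^{-j+1} + \varphi_j k^{-j}$ reflects the block structure of the linearized operator: a function $\eta$ pulled back from $M$ sits in the kernel of $\Delta_V$, so it first enters the scalar curvature expansion at order $k^{-2}$ relative to its appearance; a section $\Phi$ of $W = \mathrm{End}_0(E)$ corresponds via $\mathrm{Tr}(\lambda(h)\Phi)$ to a first-eigenfunction on each fiber, on which $\Delta_V(\Delta_V - r)$ vanishes, so it enters one order later; and a generic fiber function $\varphi$ with nonzero higher fiber-Fourier modes is handled directly because $\Delta_V(\Delta_V-r)$ is invertible there. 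Concretely, I would decompose $\rho_J$ along fiberwise eigenspaces of $\Delta_V$: its component in the $\geq 2$-eigenspaces is removed by choosing $\varphi_J$ via inverting $\Delta_V(\Delta_V - r)$; its component in the first eigenspace $\Gamma(M,W)$ is removed by choosing $\Phi_{J+1}$ using the next-order part of $L_k$ restricted to $W$ (which Hong shows, and one re-derives here, is an invertible operator of Laplace type on sections of $W$, owing to stability of $E$ — this is where the Mumford-stable hypothesis and the Donaldson--Uhlenbeck--Yau theorem feed in); and its fiber-constant component, a function on $M$, is removed by adjusting $\eta_{J+2}$ together with the Hamiltonian $b_J$, using the fact that the relevant operator on $\mathcal{C}^\infty(M)$ is the Lichnerowicz-type operator $\mathcal{D}^*\mathcal{D}$ whose cokernel is exactly the space of Hamiltonian potentials — this is precisely why $b_J$ must be allowed to range over $\bar{\mathfrak{k}}$, to absorb the kernel contribution.

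I expect the main obstacle to be bookkeeping the fiber-constant ($M$-level) obstruction correctly, and in particular verifying that the extra term $\tfrac12\langle\nabla l_k(b),\nabla\varphi\rangle$ contributes in a controlled way so that, at each stage, the residual on $M$ lands in the image of $\mathcal{D}^*\mathcal{D}$ modulo the finite-dimensional space $\ker(\mathcal{D}^*\mathcal{D})$ that $b_J$ is built to cancel. This is the extremal-metric analogue of the cscK obstruction in~\cite{H4}, and the presence of $X_s$ (the assumption $X_s \in \mathfrak{t}$ guarantees $l_k(S(\omega_\infty))$ is a genuine Hamiltonian on $\PE$, by the remark after Definition~\ref{def4}) makes the bracket term unavoidable; one must check it interacts with the torus-invariance constraint $\mathcal{C}^\infty(\PE)^T$ consistently, so that every correction can be chosen $T$-invariant. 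Once the inductive step is established, the proposition follows by truncating the formal series at order $p$; the identification of $b_0, b_1$ and the expansion asserted in Theorem~\ref{thm2} (with the explicit $\Sigma_E$) then comes from carrying the first few steps out explicitly, using Corollary~\ref{cor1} to read off the $k^{-2}$ term.
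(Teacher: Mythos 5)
Your proposal follows essentially the same route as the paper's proof: an induction on the order in $k^{-1}$, in which the error at each stage is decomposed along the fiberwise eigenspaces of $\Delta_V$ --- the higher modes killed by inverting $\Delta_V(\Delta_V-r)$, and the $\Gamma(M,W)$-component together with the fiber-constant component killed via the surjectivity of the coupled linearization $(\eta,\Phi,b)\mapsto S_{1,1}(\eta,\Phi)+\tfrac12\langle\nabla_{\omega_\infty} S_\infty,\nabla_{\omega_\infty}\eta\rangle-b$ (Corollary~\ref{cor2}), with $b\in\bar{\Lie{k}}$ absorbing the $T$-invariant kernel of the Lichnerowicz operator, exactly as in the paper. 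The only discrepancy is cosmetic: your indexing ($\Phi_{J+1}$, $\eta_{J+2}$) differs from the statement's convention, in which $\eta_p$, $\Phi_p$, $\varphi_p$ all carry the index of the order $k^{-p}$ at which they enter the equation.
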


Define $A_{1}(\omega, h)=S(\omega)I_{E}+\frac{i}{2\pi}\Lambda_{\omega} F_{h}^{0} $ and $S_{1}(\omega, h)= \textup{Tr}(A_{1}(h,\omega)\lambda(h))$, where $F_h^0$ is the traceless part of $F_h$.

\begin{prop}\label{prop5}
Suppose that  $\omega_{\infty} \in 2\pi c_{1}(L)$ is an extremal K\"ahler
metric on $M$ and $h_{\textrm{HE}}$ is a
Hermitian-Einstein  metric on $E$ with  the $\omega_{\infty}$-slope $\mu$. Then we have
\begin{align*}&A_{1,1}:=\frac{d}{dt}\Big|_{t=0}A_{1}(\omega_{\infty}+it\overline{\partial}\partial
\eta, h_{\textrm{HE}}(I+t\phi))\\&=\big(\mathcal{D}^*\mathcal{D}\eta-  \frac{1}{2}  \langle  \nabla_{\omega_{\infty}}S(\omega_{\infty}),  \nabla_{\omega_{\infty}} \eta \rangle_{\omega_{\infty}}\big)I_{E} \\&+
\frac{i}{2\pi}\Big\{(\Lambda_{\omega_{\infty}}\overline{\partial}\pa
\Phi+2\Lambda^2_{\omega_{\infty}}(F_{h_{\textrm{HE}}}\wedge
(i\overline{\partial}\partial \eta)
))\Big\}^{0},\end{align*}
where $\mathcal{D}^*\mathcal{D}$ is Lichnerowicz operator (cf.
\cite{D3}*{Page 515}) and $\{\Sigma\}^{0}$ is the traceless part of $\Sigma$, i.e. $\{\Sigma\}^0=\Sigma- \frac{1}{r}tr(\Sigma)$. Note that we use the operator $\pa$ to denote the covariant derivative of sections of the bundle $\textup{End}(E)$.
\end{prop}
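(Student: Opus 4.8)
The plan is to linearize the two summands of $A_1(\omega,h)=S(\omega)I_E+\tfrac{i}{2\pi}\Lambda_\omega F_h^0$ separately along the path $t\mapsto(\omega_t,h_t)=(\omega_\infty+t\,\ddbar\eta,\ h_{\textrm{HE}}(I+t\Phi))$, where $\Phi$ is taken $h_{\textrm{HE}}$-self-adjoint so that $h_t$ remains a metric, and where I use the identity $it\bar\partial\partial\eta=t\,\ddbar\eta$. The $I_E$-part of $A_{1,1}$ will come entirely from $S(\omega)I_E$, and the traceless $\textup{End}(E)$-valued part from $\tfrac{i}{2\pi}\Lambda_\omega F_h^0$; since the traceless projection $\{\cdot\}^0$ is linear and independent of $(\omega,h)$, the latter is just $\tfrac{i}{2\pi}\big\{\tfrac{d}{dt}\big|_{0}\Lambda_{\omega_t}F_{h_t}\big\}^0$.

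For the scalar curvature term I would invoke the classical formula for the first variation of $S$ — equivalently, Proposition~\ref{prop0} rewritten via the standard Bochner identity for Kähler metrics, which identifies the operator $(\Delta^2-S\Delta)(\cdot)+m(m-1)\tfrac{\ddbar(\cdot)\wedge\Ric\wedge\omega^{m-2}}{\omega^m}$ with the Lichnerowicz operator up to the first-order correction $-\tfrac12\langle\nabla S,\nabla(\cdot)\rangle$. This gives
\[
\tfrac{d}{dt}\big|_{t=0}S(\omega_\infty+t\,\ddbar\eta)=\mathcal{D}^*\mathcal{D}\eta-\tfrac12\langle\nabla_{\omega_\infty}S(\omega_\infty),\nabla_{\omega_\infty}\eta\rangle_{\omega_\infty},
\]
which is the first line of the asserted formula (the sign is consistent because the variation $\ddbar\eta$ equals $\sqrt{-1}\,\partial\bar\partial(-\eta)$). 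I would cite this identity rather than reprove it.

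For the curvature term I would split $\tfrac{d}{dt}\big|_{0}\Lambda_{\omega_t}F_{h_t}$ into an $h$-variation and an $\omega$-variation. The $h$-variation is the standard first variation of the Chern curvature: since $h_t=h_{\textrm{HE}}(I+t\Phi)$ to first order, $\tfrac{d}{dt}\big|_{0}F_{h_t}=\bar\partial\pa\Phi$, with $\pa$ the covariant derivative on $\textup{End}(E)$ as in the statement, contributing $\Lambda_{\omega_\infty}\bar\partial\pa\Phi$. The $\omega$-variation is obtained by differentiating the defining relation $m\,\beta\wedge\omega_t^{m-1}=(\Lambda_{\omega_t}\beta)\,\omega_t^m$ of Definition~\ref{def1} with $\beta=F_{h_{\textrm{HE}}}$ held fixed; at $t=0$ this yields
\[
\tfrac{d}{dt}\big|_{0}\Lambda_{\omega_t}\beta=-(\Lambda_{\omega_\infty}\beta)(\Lambda_{\omega_\infty}\ddbar\eta)+2\Lambda^2_{\omega_\infty}(\beta\wedge\ddbar\eta),
\]
the factor $2$ being exactly the combinatorial constant of Definition~\ref{def1}. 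The Hermitian--Einstein condition $\Lambda_{\omega_\infty}(iF_{h_{\textrm{HE}}})=\mu I_E$ now makes $(\Lambda_{\omega_\infty}F_{h_{\textrm{HE}}})(\Lambda_{\omega_\infty}\ddbar\eta)$ a scalar function times $I_E$, hence killed by $\{\cdot\}^0$; and $\Lambda_{\omega_\infty}\bar\partial\pa\Phi$ is automatically traceless when $\Phi$ is, because $\textup{tr}$ commutes with $\bar\partial$ and with the covariant $\pa$. Collecting terms gives
\[
\tfrac{d}{dt}\big|_{0}\Big[\tfrac{i}{2\pi}\Lambda_{\omega_t}F_{h_t}^0\Big]=\tfrac{i}{2\pi}\big\{\Lambda_{\omega_\infty}\bar\partial\pa\Phi+2\Lambda^2_{\omega_\infty}(F_{h_{\textrm{HE}}}\wedge\ddbar\eta)\big\}^0,
\]
and adding the $I_E$-term from the previous paragraph produces the stated expression for $A_{1,1}$.

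The computation is essentially routine once the pieces are assembled; I expect the main obstacle to be bookkeeping — the interplay of $\sqrt{-1}\,\bar\partial\partial$ versus $\sqrt{-1}\,\partial\bar\partial$, the normalization $\tfrac{i}{2\pi}$, and the order of $\bar\partial$ relative to the covariant $\pa$ in $\tfrac{d}{dt}F_{h_t}$ — together with, on the analytic side, having the scalar-curvature linearization in Lichnerowicz form rather than in the $\Delta^2+\Ric$ form of Proposition~\ref{prop0}; reconciling those two forms is the single step I would defer to a citation rather than carry out in detail.
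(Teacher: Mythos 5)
Your proposal is correct and follows essentially the same route as the paper: the paper likewise cites Donaldson for the Lichnerowicz-form linearization of the scalar curvature, and obtains the curvature term by differentiating the defining identity $mF_{h_t}\wedge\omega_t^{m-1}=(\Lambda_{\omega_t}F_{h_t})\,\omega_t^m$ at $t=0$, using $f(0)=\mu I_E$ so that the leftover term $-\mu\Lambda_{\omega_\infty}(i\bar\partial\partial\eta)I_E$ is annihilated by $\{\cdot\}^0$. Splitting the derivative into separate $h$- and $\omega$-variations, as you do, is just the product rule applied to that same identity, so there is no substantive difference.
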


\begin{proof}
Define
$f(t)=\Lambda_{\omega_{\infty}+it\overline{\partial}\partial
\eta}F_{(h_{\textrm{HE}}(I+t\phi))}$. Then we have
$$mF_{(h_{\textrm{HE}}(I+t\phi))} \wedge
(\omega_{\infty}+it\overline{\partial}\partial \eta)^{m-1}= f(t)
(\omega_{\infty}+it\overline{\partial}\partial \eta)^{m}.$$
Differentiating with respect to $t$ at $t=0$, we obtain
$$m \overline{\partial}\pa
\phi \wedge \omega_{\infty}^{m-1}+m(m-1)F_{h_{\textrm{HE}}}\wedge
(i\overline{\partial}\partial \eta)\wedge
\omega_{\infty}^{m-2}=f'(0)\omega_{\infty}^m+mf(0)(i\overline{\partial}\partial
\eta)\wedge \omega_{\infty}^{m-1}.$$ Since $f(0)=\mu I_{E}$, we
get
$f'(0)=\Lambda_{\omega_{\infty}}\overline{\partial}\pa \phi
+2\Lambda^2_{\omega_{\infty}}(F_{h_{\textrm{HE}}}\wedge
(i\overline{\partial}\partial \eta))-\mu
\Lambda_{\omega_{\infty}}(i\overline{\partial}\partial \eta)I_{E}
$.
On the other hand (cf. \cite{D3}*{pp. 515, 516}.)
$$\frac{d}{dt}\Big|_{t=0}S(\omega_{\infty}+it\overline{\partial}\partial
\eta)=\mathcal{D}^*\mathcal{D}\eta-\frac{1}{2} \langle  \nabla_{\omega_{\infty}}S(\omega_{\infty}),  \nabla_{\omega_{\infty}} \eta \rangle_{\omega_{\infty}}.$$
The proposition follows from the above two equations.
\end{proof}

\begin{lem} \label{lem4}

Suppose that  $\omega_{\infty} \in 2\pi c_{1}(L)$ is an extremal metric on $M$ and $h_{\textrm{HE}}$ be a
Hermitian-Einstein  metric on $E$, i.e.
$\Lambda_{\omega_{\infty}} F_{(E,h_{\textrm{HE}})}=\mu I_{E}$, where
$\mu$ is the $\omega_{\infty}$-slope of the bundle $E$. We have
\begin{align*}S_{1,1}&:=\frac{d}{dt}\Big|_{t=0}S_{1}(\omega_{\infty}+it\overline{\partial}\partial
\eta,h_{\textrm{HE}}(I+t\phi))\\&= \mathcal{D}^*\mathcal{D}\eta- \frac{1}{2}\langle  \nabla_{\omega_{\infty}}S(\omega_{\infty}),  \nabla_{\omega_{\infty}} \eta \rangle_{\omega_{\infty}}\\&+\frac{i}{2\pi}\textup{Tr}\Big( \big\{
\Lambda_{\omega_{\infty}}\overline{\partial}D
\phi+2\Lambda^2_{\omega_{\infty}}(F_{h_{\textrm{HE}}}\wedge
(i\overline{\partial}\partial \eta))\big\}^{0} \lambda(h_{\textrm{HE}}) \Big).\end{align*}

\end{lem}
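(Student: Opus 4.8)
The plan is to differentiate the composition defining $S_1$ directly, using the chain rule together with the definition $S_1(\omega,h)=\textup{Tr}(A_1(\omega,h)\lambda(h))$ and the already-established formula for $A_{1,1}$ from Proposition~\ref{prop5}. First I would write
\[
S_{1,1}=\frac{d}{dt}\Big|_{t=0}\textup{Tr}\big(A_1(\omega_{\infty}+it\overline{\partial}\partial\eta,\,h_{\textrm{HE}}(I+t\phi))\,\lambda(h_{\textrm{HE}}(I+t\phi))\big),
\]
and expand the derivative of a product into two terms: the term where the derivative hits $A_1$, and the term where it hits $\lambda(h)$. The first term is $\textup{Tr}(A_{1,1}\,\lambda(h_{\textrm{HE}}))$ with $A_{1,1}$ given by Proposition~\ref{prop5}; since $\lambda(h_{\textrm{HE}})$ is a rank-one projection-type endomorphism with $\textup{Tr}(\lambda(h_{\textrm{HE}}))=1$, the scalar (identity) part of $A_{1,1}$ contributes exactly $\mathcal{D}^*\mathcal{D}\eta-\tfrac12\langle\nabla_{\omega_{\infty}}S(\omega_{\infty}),\nabla_{\omega_{\infty}}\eta\rangle_{\omega_{\infty}}$, while the traceless part contributes the $\tfrac{i}{2\pi}\textup{Tr}(\{\cdots\}^0\lambda(h_{\textrm{HE}}))$ term — which already matches the claimed answer after replacing $\overline{\partial}\pa\phi$ by $\overline{\partial}D\phi$, as one must since $\phi$ varies the metric and the correct covariant object is $\overline{\partial}D\phi$.

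The crux is therefore to show that the second term, $\textup{Tr}\big(A_1(\omega_{\infty},h_{\textrm{HE}})\,\tfrac{d}{dt}\big|_{t=0}\lambda(h_{\textrm{HE}}(I+t\phi))\big)$, vanishes, so that it does not contaminate the formula. Here I would use the Hermitian-Einstein condition: $A_1(\omega_{\infty},h_{\textrm{HE}})=S(\omega_{\infty})I_E+\tfrac{i}{2\pi}\Lambda_{\omega_{\infty}}F_{h_{\textrm{HE}}}^0=S(\omega_{\infty})I_E$, because the traceless part of $\Lambda_{\omega_{\infty}}F_{h_{\textrm{HE}}}$ is zero when $h_{\textrm{HE}}$ is Hermitian-Einstein. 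So the second term reduces to $S(\omega_{\infty})\,\textup{Tr}\big(\tfrac{d}{dt}\big|_{t=0}\lambda(h_{\textrm{HE}}(I+t\phi))\big)$, and since $\textup{Tr}(\lambda(h))\equiv 1$ is constant in $h$, its derivative is $0$. This is the step I expect to be the main (though modest) obstacle: one has to be careful that $A_1$ evaluated at the base point really is a scalar multiple of the identity, which is precisely where the Hermitian-Einstein hypothesis enters, and one must also be careful about what "$\pa$" means as a covariant derivative on sections of $\textup{End}(E)$ so that $\overline{\partial}\pa\phi$ in Proposition~\ref{prop5} becomes $\overline{\partial}D\phi$ in the statement once the full variation (including the change of connection) is accounted for.

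Finally I would assemble the two contributions: the identity part of $\textup{Tr}(A_{1,1}\lambda(h_{\textrm{HE}}))$ gives the Lichnerowicz plus gradient terms, the traceless part gives the $\tfrac{i}{2\pi}\textup{Tr}(\{\cdots\}^0\lambda(h_{\textrm{HE}}))$ term, and the $\lambda$-variation term vanishes. Collecting these yields exactly
\[
S_{1,1}=\mathcal{D}^*\mathcal{D}\eta-\tfrac12\langle\nabla_{\omega_{\infty}}S(\omega_{\infty}),\nabla_{\omega_{\infty}}\eta\rangle_{\omega_{\infty}}+\tfrac{i}{2\pi}\textup{Tr}\big(\{\Lambda_{\omega_{\infty}}\overline{\partial}D\phi+2\Lambda^2_{\omega_{\infty}}(F_{h_{\textrm{HE}}}\wedge(i\overline{\partial}\partial\eta))\}^0\lambda(h_{\textrm{HE}})\big),
\]
which is the assertion. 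The whole argument is short once Proposition~\ref{prop5} is in hand; the only genuine input beyond bookkeeping is the reduction of $A_1$ at the base point to a scalar via Hermitian-Einstein and the constancy of $\textup{Tr}\lambda$.
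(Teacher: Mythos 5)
Your proposal is correct and follows essentially the same route as the paper: differentiate $\textup{Tr}(A_1\lambda(h))$ by the product rule, feed in Proposition~\ref{prop5} for the term where the derivative lands on $A_1$, and kill the term where it lands on $\lambda(h)$ using the Hermitian--Einstein condition $\{\Lambda_{\omega_{\infty}}F_{h_{\textrm{HE}}}\}^{0}=0$ together with $\textup{Tr}(\lambda(h))\equiv 1$. Your explicit handling of the scalar part of $A_1$ and of the $\overline{\partial}\partial\phi$ versus $\overline{\partial}D\phi$ notation is, if anything, slightly more careful than the paper's one-line justification.
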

\begin{proof}

The proof follows from the previous proposition and the fact that $$\{ \Lambda_{\omega_{\infty}}F_{(E,h_{\textrm{HE}})}\}^{0}=0.$$ Note that
\begin{align*}\frac{d}{dt}\Big|_{t=0}& \textup{Tr} \Big(  \{\Lambda_{\omega_{\infty}+it\overline{\partial}\partial
\eta}F_{(h_{\textrm{HE}}(I+t\phi))}\}^{0}\lambda(h_{\textrm{HE}}(I+t\phi))  \Big)\\&= \textup{Tr} \Big( \frac{d}{dt}\Big|_{t=0} \{\Lambda_{\omega_{\infty}+it\overline{\partial}\partial\eta}F_{(h_{\textrm{HE}}(I+t\phi))}\}^{0}\lambda(h_{\textrm{HE}})  \Big)
\\&+ \textup{Tr} \Big(  \{\Lambda_{\omega_{\infty}}F_{h_{\textrm{HE}}}\}^{0}\frac{d}{dt}\Big|_{t=0} \lambda(h_{\textrm{HE}}(I+t\phi))  \Big)\\&=
\textup{Tr}\Big(A_{1,1}(\eta, \phi)\lambda(h_{\textrm{HE}})  \Big).
\end{align*}

\end{proof}

Since $T$ is a compact group, by the uniqueness of the Hermitian-Einstein metric, $h$ is invariant under $T$.

\begin{lem}\label{lem5}
Suppose that $E$ is Mumford stable and $h$ is a Hermitian-Einstein metric with respect to $\omega_{\infty}$, i.e. $\Lambda_{\omega_{\infty}}F_{h}=\mu I_{E}$.
Then $h$ is invariant under the action of $T$.

\end{lem}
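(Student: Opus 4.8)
The plan is to reduce the statement to the uniqueness part of the Donaldson--Uhlenbeck--Yau theorem. What one really needs from ``$h$ is $T$-invariant'' in the sequel is that the form $\omega_g=i\bar\partial\partial\log\widehat h$ on $\PE$ is invariant under the $T$-action, so I would formulate the proof directly in those terms and then remark that, up to the inevitable positive-scalar ambiguity, $h$ itself is invariant.

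First I would recall the relevant structure. Each $t\in T\subseteq G_E$ acts on $M$ as a holomorphic isometry of $\omega_\infty$, hence $t^*\omega_\infty=\omega_\infty$, and by definition of $G_E$ this lifts to an automorphism $\sigma_t$ of $\PE$ covering $t$. An automorphism of $\PE$ covering $t$ is induced by a holomorphic bundle isomorphism $\Psi_t\colon E\to E$ over $t$, unique up to a nonzero constant. Let $h_t$ be the Hermitian metric on $E$ obtained by pulling $h$ back along $t$ and $\Psi_t$; by the functoriality of the construction \eqref{12} one has $\sigma_t^*\widehat h=\widehat{h_t}$. The key step is then to observe that $h_t$ is again Hermitian--Einstein with respect to $\omega_\infty$ with the same slope $\mu$: indeed $F_{h_t}$ is the $t$-pullback of $F_h$ conjugated by $\Psi_t$, and since $t^*\omega_\infty=\omega_\infty$ and $\Lambda_{\omega_\infty}F_h=\mu I_E$ with $I_E$ fixed under conjugation, we get $\Lambda_{\omega_\infty}F_{h_t}=\mu I_E$. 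Because $E$ is Mumford stable, the Hermitian--Einstein metric is unique up to a positive constant, so $h_t=c(t)\,h$ with $c(t)>0$.

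To finish, note that scaling $h$ by a positive constant only scales $\widehat h$ by a constant, so $\sigma_t^*\omega_g=i\bar\partial\partial\log\widehat{h_t}=i\bar\partial\partial\log\widehat h=\omega_g$ for every $t\in T$; thus $\omega_g$, and with it $\omega_k=\omega_g+k\pi^*\omega_\infty$, is $T$-invariant. If the literal statement $\Psi_t^*h=h$ is wanted, one simply renormalizes the lift $\Psi_t$ by a positive scalar. The only subtleties are the scalar indeterminacy of the lift $\Psi_t$, which is harmless because it is annihilated by $i\bar\partial\partial\log$, and the routine identification of the curvature of the pulled-back metric with the pullback of $F_h$; I do not expect either to cause real difficulty, the substance of the argument being exactly the uniqueness of the Hermitian--Einstein metric on the stable bundle $E$ together with the compactness of the torus $T$.
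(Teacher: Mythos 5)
Your proposal is correct and follows essentially the same route as the paper, which disposes of this lemma in one line by citing the uniqueness of the Hermitian--Einstein metric on the stable bundle $E$ together with the compactness of $T$. Your write-up merely makes explicit the pullback $h_t$ of $h$ along a lift $\Psi_t$, the verification that $h_t$ is again Hermitian--Einstein, and the handling of the scalar ambiguity $c(t)$ (which, as you note, is killed by $i\bar\partial\partial\log$, or can be normalized away using that any continuous homomorphism from the compact group $T$ to $\mathbb{R}_{>0}$ is trivial).
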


\begin{cor}\label{cor3}

The scalar curvature of $\omega_{k} $ is invariant under the action of $T$.

\end{cor}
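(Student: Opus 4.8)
The plan is to derive Corollary~\ref{cor3} directly from Lemma~\ref{lem5} together with the fact that $\omega_\infty$ is itself $T$-invariant. First I would recall that $T\subseteq G_E\subseteq G=\textup{Ham}(M,\omega_\infty)$, so every element of $T$ acts on $(M,\omega_\infty)$ by a biholomorphic isometry; in particular the scalar curvature $S(\omega_\infty)\in\mathcal C^\infty(M)$ is $T$-invariant. By Lemma~\ref{lem5}, the Hermitian-Einstein metric $h$ on $E$ is also $T$-invariant, hence so is its curvature $F_h$ and all the tensors built from $h$ and $\omega_\infty$ (e.g. $\textup{Tr}(iF_h)$, $\lambda(h)$, the functions $f_1,f_2,\dots$ defined by~\eqref{eq4}, and the fibrewise Fubini-Study forms).

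The key structural point is that the induced metric $\widehat h$ on $\mathcal O_{\mathbb P E^*}(1)$, and therefore the $(1,1)$-form $\omega_g=i\bar\partial\partial\log\widehat h$, is $T$-invariant: each $t\in T$ lifts (by definition of $G_E$) to an automorphism $\tilde t$ of $\PE$ covering the action of $t$ on $M$, and since $t$ is an isometry of $h$ the induced action $\tilde t$ preserves the Fubini-Study data on fibres, hence fixes $\widehat h$ up to the usual cocycle and fixes $\omega_g$. Consequently the K\"ahler metric $\omega_k=\omega_g+k\pi^*\omega_\infty$ is $T$-invariant for every $k$. Since the scalar curvature is a natural differential operator — $\textup{Scal}(\tilde t^*\omega_k)=\tilde t^*\textup{Scal}(\omega_k)$ for any biholomorphism $\tilde t$ — and $\tilde t^*\omega_k=\omega_k$ for $t\in T$, we conclude $\textup{Scal}(\omega_k)$ is invariant under the $T$-action on $\PE$.

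Alternatively, and perhaps more cleanly given what has already been set up, one can invoke Theorem~\ref{thm3}: the asymptotic expansion there writes $\textup{Scal}(\omega_k)$ as a universal polynomial expression in $S(\omega_\infty)$, $\Ric(\omega_\infty)$, $F_h$, the forms $f_j$, and the operators $\Delta_V,\tilde\Delta_H,\Lambda_{\omega_\infty}$, all of which are $T$-invariant by the previous paragraph; since each term in the expansion (and hence, by a standard argument comparing the full expansions for $\omega_k$ and $\tilde t^*\omega_k$, the function $\textup{Scal}(\omega_k)$ itself) is $T$-invariant, the corollary follows. I would present the first, naturality-based argument as the main proof since it avoids any discussion of convergence of the expansion.

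The only genuine subtlety — and the step I would state carefully rather than wave through — is the claim that $\omega_g$ is $T$-invariant. This rests on the observation that although an element $t\in T$ a priori only lifts to an automorphism of $\PE$ as a holomorphic fibre bundle (not necessarily preserving $\widehat h$), the $T$-invariance of $h$ supplied by Lemma~\ref{lem5} forces the lift to act isometrically on the fibrewise metrics~\eqref{12}, so that the curvature form $\omega_g=i\bar\partial\partial\log\widehat h$ is genuinely pulled back to itself. Once this is granted, the rest is immediate from the naturality of scalar curvature under biholomorphisms.
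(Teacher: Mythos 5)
Your argument is correct and is essentially the paper's own (very terse) proof spelled out: Lemma \ref{lem5} gives $T$-invariance of the Hermitian--Einstein metric $h$, hence of $\widehat h$, $\omega_g$, and $\omega_k=\omega_g+k\pi^*\omega_\infty$, and naturality of scalar curvature under biholomorphisms finishes it. Your care about why the lift of $t\in T$ actually preserves $\widehat h$ is a useful elaboration of what the paper leaves implicit, but it is not a different route.
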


The above two results follows from the uniqueness of the Hermitian-Einstein metric.

\begin{cor}\label{cor2}
The map
\begin{align*}
&\mathcal  C^{\infty}(M)\oplus \Gamma(M,W)\oplus \bar{\Lie{g} }\to \mathcal C_{0}^{\infty}(M)\oplus \Gamma(M,W)\\
&(\eta, \Phi, b)  \mapsto S_{1,1}(\eta, \Phi)+\frac 12\langle  \nabla_{\omega_{\infty}}S_{\infty},  \nabla_{\omega_{\infty}} \eta \rangle_{\omega_{\infty}}-b
\end{align*}
 is surjective. Here $S_{\infty}=S$ is the scalar curvature of $\omega_{\infty}$ and $\mathcal  C_{0}^{\infty}(M)$ is the space of smooth functions  $\eta$ on $M$ such that
 \[
 \int_{M}\eta \omega_{\infty}^m=0.
 \]
  Moreover, the equivariant version is also valid, that is,
   \begin{align*}
     &\mathcal C^{\infty}(M)^T\oplus \Gamma(M,W)^T\oplus \bar{\Lie{k} }\to \mathcal   C_{0}^{\infty}(M)^T\oplus \Gamma(M,W)^T\\
   &(\eta, \Phi, b)  \mapsto S_{1,1}(\eta, \Phi)+\frac 12\langle  \nabla_{\omega_{\infty}}S_{\infty},  \nabla_{\omega_{\infty}} \eta \rangle_{\omega_{\infty}}-b
   \end{align*} is surjective.

\end{cor}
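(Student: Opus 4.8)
The plan is to identify the operator $(\eta,\Phi,b)\mapsto S_{1,1}(\eta,\Phi)+\frac12\langle\nabla_{\omega_\infty}S_\infty,\nabla_{\omega_\infty}\eta\rangle_{\omega_\infty}-b$ as a self-adjoint elliptic operator (modulo the $-b$ term) acting between the natural Hilbert completions, and then to show that its cokernel is exactly cancelled by the image of $\bar{\Lie g}$ (resp.\ $\bar{\Lie k}$). First I would decompose the target $\mathcal C^\infty_0(M)\oplus\Gamma(M,W)$ according to the two summands. By Lemma \ref{lem4}, on the $\Gamma(M,W)$ component the operator is, up to lower order terms, $\Phi\mapsto\textup{Tr}(\{\Lambda_{\omega_\infty}\bar\pa D\Phi\}^0\lambda(h_{\textrm{HE}}))$, which is (a constant times) the fibrewise Laplacian acting on sections of $W=\textup{End}_0(E)$ coupled to the base via the Hermitian--Einstein connection; this is elliptic and self-adjoint, and its kernel is precisely the space of holomorphic sections, i.e.\ those $\Phi$ with $\bar\pa D\Phi=0$, which by Proposition \ref{prop3} correspond to holomorphic vector fields $X$ on $M$ that lift to $\PE$. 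On the $\mathcal C^\infty_0(M)$ component, after setting $\Phi=0$, the operator is $\eta\mapsto\mathcal D^*\mathcal D\eta-\frac12\langle\nabla_{\omega_\infty}S_\infty,\nabla_{\omega_\infty}\eta\rangle_{\omega_\infty}+\frac12\langle\nabla_{\omega_\infty}S_\infty,\nabla_{\omega_\infty}\eta\rangle_{\omega_\infty}=\mathcal D^*\mathcal D\eta$; wait, more carefully the two transport terms combine so that the relevant operator on the base is the Lichnerowicz operator $\mathcal D^*\mathcal D$, which is self-adjoint with kernel the Hamiltonian Killing potentials of $\omega_\infty$.

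The key step is then a Fredholm/self-adjointness argument: the map $(\eta,\Phi)\mapsto S_{1,1}(\eta,\Phi)+\frac12\langle\nabla S_\infty,\nabla\eta\rangle$ (with $b=0$) is elliptic of the appropriate order and self-adjoint with respect to the $L^2$ inner products induced by $\omega_\infty$ and $h_{\textrm{HE}}$, hence its image is the $L^2$-orthogonal complement of its kernel. The kernel consists of pairs $(\eta,\Phi)$ with $\mathcal D^*\mathcal D\eta=0$ and $\bar\pa D\Phi=0$; by Proposition \ref{prop3} and Lemma \ref{lem3}, to each such pair one associates a holomorphic vector field $X$ on $M$ (with potential $\eta\in\bar{\Lie g}$) liftable to $\PE$, with $u_X$ accounting for $\Phi$. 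Thus for any element of the cokernel there is a $b\in\bar{\Lie g}$ whose contribution $-b$ cancels it; adding the $\bar{\Lie g}$ factor to the source therefore makes the full map surjective. One must check that the pairing of $b\in\bar{\Lie g}$ against the cokernel is nondegenerate, which follows because $b$ itself is a Hamiltonian potential and the $L^2$ pairing of $\mathcal D^*\mathcal D$-harmonic functions with $\bar{\Lie g}$ is the identity on that finite-dimensional space.

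For the equivariant statement, I would average by the $T$-action: since $\omega_\infty$ is $T$-invariant and, by Lemma \ref{lem5} and Corollary \ref{cor3}, $h_{\textrm{HE}}$ and all the relevant curvature data are $T$-invariant, the operator commutes with the $T$-action, so it restricts to the $T$-invariant subspaces. The kernel of the restricted operator consists of $T$-invariant pairs, whose associated holomorphic vector fields commute with $T$, i.e.\ lie in $\Lie k$ by definition of $K$; hence the cokernel on the invariant side is matched exactly by $\bar{\Lie k}$, and the same self-adjointness argument gives surjectivity. The main obstacle I anticipate is the careful bookkeeping in the first paragraph: verifying that the two transport terms $\frac12\langle\nabla S_\infty,\nabla\eta\rangle$ genuinely combine (using $dS(\omega_\infty)=\iota_{X_s}\omega_\infty$ and that $S_{1,1}$ already contains a $-\frac12\langle\nabla S_\infty,\nabla\eta\rangle$ term from Lemma \ref{lem4}) so that the base operator is exactly $\mathcal D^*\mathcal D$ and is therefore self-adjoint with the expected finite-dimensional kernel; once that identification is clean, surjectivity is a standard consequence of elliptic theory together with Proposition \ref{prop3}.
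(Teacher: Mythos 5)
Your overall architecture is right -- the transport terms do cancel so that the base component of the operator is exactly $\mathcal D^*\mathcal D$, the system is triangular, and the $\mathcal C^\infty_0(M)$-component of the cokernel is filled in by $b$ because $\mathcal C^\infty(M)=\operatorname{Im}(\mathcal D^*\mathcal D)\oplus\bar{\Lie g}$ by self-adjointness of the Lichnerowicz operator. (The paper offers no written proof of this corollary, so there is nothing to compare routes against; it simply cites Lemma \ref{lem4}.) But there is a genuine gap in your treatment of the $\Gamma(M,W)$ summand. The element $b\in\bar{\Lie g}$ is a function pulled back from $M$; under the decomposition of $\mathcal C^\infty(\PE)$ into fibrewise average plus first-eigenspace part, it contributes \emph{only} to the $\mathcal C^\infty_0(M)$ summand and has zero component in $\Gamma(M,W)$. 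So your claim that ``for any element of the cokernel there is a $b\in\bar{\Lie g}$ whose contribution $-b$ cancels it'' cannot work for the $W$-part of the cokernel: if the operator $\Phi\mapsto\{\Lambda_{\omega_\infty}\bar\pa\pa\Phi\}^0$ failed to be surjective onto $\Gamma(M,\textup{End}_0(E))$, no choice of $b$ could repair that.

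What actually closes the $W$-component is the hypothesis that $E$ is Mumford stable, hence simple: the elliptic, self-adjoint operator $\Lambda_{\omega_\infty}\bar\pa\pa$ on $\Gamma(M,\textup{End}(E))$ has kernel exactly the holomorphic endomorphisms, which for a simple bundle are $\mathbb C\cdot I_E$; restricted to traceless endomorphisms the kernel is therefore zero and the operator is an isomorphism of $\Gamma(M,\textup{End}_0(E))=\Gamma(M,W)$ onto itself. You never invoke stability, and your identification of this kernel with ``holomorphic vector fields on $M$ that lift to $\PE$'' via Proposition \ref{prop3} conflates two different operators: Proposition \ref{prop3} concerns $\Lambda\pa(\bar\pa u_X-\iota_X F_h)=0$ for the coupled data $(X,u_X)$, not the kernel of $\Lambda\bar\pa\pa$ acting on $\Phi$ alone. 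Once you replace that step with the simplicity argument, the rest of your proof (solve first for $(\eta,b)$ on the base, then for $\Phi$ absorbing the $\eta$-dependent curvature term, and restrict everything to $T$-invariant subspaces using the $T$-invariance of $\omega_\infty$ and $h_{\textrm{HE}}$) goes through.
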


\begin{lem}\label{lem6}

Let $\eta \in \mathcal C^{\infty}(M)$ and $\varphi \in\mathcal  C^{\infty}(\PE)$. Then $$\langle  \nabla_{\omega_{k}}\varphi,  \nabla_{\omega_{k}} \eta \rangle_{\omega_{k}}=O(k^{-1}).$$ Moreover if $\varphi \in  \mathcal C^{\infty}(M)$, then
$$\langle  \nabla_{\omega_{k}}\varphi,  \nabla_{\omega_{k}} \eta \rangle_{\omega_{k}}=k^{-1}\langle  \nabla_{\omega_{\infty}}\varphi,  \nabla_{\omega_{\infty}} \eta \rangle_{\omega_{\infty}}+O(k^{-2}).$$
\end{lem}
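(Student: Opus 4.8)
The plan is to express the Riemannian inner product $\langle \nabla_{\omega_k}\varphi, \nabla_{\omega_k}\eta\rangle_{\omega_k}$ in a form that can be attacked by the asymptotics already established in Lemma \ref{lem1} and Lemma \ref{lem2}. Recall the standard identity on a \ka manifold $(Y,\omega)$: for smooth functions $u,v$,
\[
\langle \nabla_\omega u, \nabla_\omega v\rangle_\omega
= \Lambda_\omega\!\left(\sqrt{-1}\,\pa u\wedge\bar\pa v\right)
+ \Lambda_\omega\!\left(\sqrt{-1}\,\pa v\wedge\bar\pa u\right),
\]
up to the usual normalization; equivalently $\langle\nabla u,\nabla v\rangle = \Delta(uv) - u\Delta v - v\Delta u$ (real Laplacian), which is the form I would actually use since Lemma \ref{lem2} already supplies the full expansion of $\Delta_k$ applied to a smooth function on $\PE$. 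So first I would write
\[
\langle \nabla_{\omega_k}\varphi, \nabla_{\omega_k}\eta\rangle_{\omega_k}
= \Delta_k(\varphi\,\eta) - \varphi\,\Delta_k\eta - \eta\,\Delta_k\varphi,
\]
where on the right $\eta$ is regarded as the pullback $\pi^*\eta$, a function constant along the fibres.

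Second, I would feed this into Lemma \ref{lem2}. The leading term of $\Delta_k$ is the vertical Laplacian $\Delta_V$, and $\Delta_V$ annihilates any function pulled back from $M$; in particular $\Delta_V(\pi^*\eta)=0$, and $\Delta_V(\varphi\cdot\pi^*\eta) = \pi^*\eta\cdot\Delta_V\varphi$ because $\pi^*\eta$ is vertically constant and may be pulled out of the fibrewise operator $\Delta_V$ (the fibrewise Hessian of $\pi^*\eta$ vanishes). Hence the $k^0$ terms in $\Delta_k(\varphi\,\pi^*\eta) - \varphi\,\Delta_k(\pi^*\eta) - \pi^*\eta\,\Delta_k\varphi$ cancel identically, which already gives $\langle \nabla_{\omega_k}\varphi, \nabla_{\omega_k}\eta\rangle_{\omega_k}=O(k^{-1})$, the first assertion. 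For the second assertion, assume $\varphi=\pi^*\varphi_0$ with $\varphi_0\in\mathcal C^\infty(M)$. Then $\Delta_V$ kills everything at order $k^0$ and the next term in Lemma \ref{lem2} is $k^{-1}\tilde\Delta_H$. I would use that on functions pulled back from $M$ one has $\tilde\Delta_H(\pi^*u) = \pi^*(\Delta_{\omega_\infty} u)$ up to the normalization fixed in Definition \ref{def2} (this is exactly the content of Lemma \ref{lem22}/\eqref{2} computation: the horizontal part of $\ddbar(\pi^*u)$ is $\pi^*(\ddbar u)$, so $\tilde\Delta_H$ restricted to pullbacks is the pullback of the Laplacian of $\omega$). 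Therefore, at order $k^{-1}$,
\[
\langle \nabla_{\omega_k}\pi^*\varphi_0, \nabla_{\omega_k}\pi^*\eta\rangle_{\omega_k}
= k^{-1}\pi^*\!\big(\Delta_{\omega_\infty}(\varphi_0\eta) - \varphi_0\,\Delta_{\omega_\infty}\eta - \eta\,\Delta_{\omega_\infty}\varphi_0\big) + O(k^{-2})
= k^{-1}\pi^*\langle \nabla_{\omega_\infty}\varphi_0, \nabla_{\omega_\infty}\eta\rangle_{\omega_\infty} + O(k^{-2}),
\]
which is the claimed formula (writing $\varphi$ for $\varphi_0$, $\omega$ for $\omega_\infty$ as in the statement).

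The one point requiring genuine care—the main obstacle—is the bookkeeping of normalization constants linking the two a priori different meanings of "Laplacian" and "gradient inner product": the operators $\Delta_V, \Delta_H, \tilde\Delta_H$ of Definition \ref{def2} are defined via wedge-product identities on $\PE$ (a manifold of dimension $m+r-1$), whereas $\Delta_{\omega_\infty}$ and $\langle\cdot,\cdot\rangle_{\omega_\infty}$ live on $M$ (dimension $m$), and the polarization identity $\langle\nabla u,\nabla v\rangle=\Delta(uv)-u\Delta v-v\Delta u$ must be applied consistently on each side. I would verify once and for all, in a local normal frame as in the proof of Lemma \ref{lem22}, that $\tilde\Delta_H$ acting on $\pi^*\mathcal C^\infty(M)$ equals $\pi^*\circ\Delta_{\omega_\infty}$ with no stray constant, and that the $\Lambda_{\omega_k}$ normalization in Lemma \ref{lem1} is the one for which this polarization identity holds; everything else is the routine cancellation described above. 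No new analytic input is needed—this is purely an algebraic consequence of the expansions in Section 3 together with the vanishing of the vertical Hessian of a pulled-back function.
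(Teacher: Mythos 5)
Your argument is correct. The paper actually states Lemma \ref{lem6} without proof (it is treated as ``straightforward''), so there is no official argument to compare against; your derivation via the polarization identity $\langle\nabla u,\nabla v\rangle=\Delta(uv)-u\Delta v-v\Delta u$ combined with the expansion of $\Delta_k$ from Lemma \ref{lem2} is a legitimate way to make it rigorous. The key cancellations you use are right: $\Delta_V(\pi^*\eta)=0$ and $\Delta_V(\varphi\,\pi^*\eta)=\pi^*\eta\,\Delta_V\varphi$ because $\ddbar(\pi^*\eta)$ and $\pa(\pi^*\eta)$ are horizontal and $\pi^*\omega^m$ already saturates the horizontal directions in the defining identity of $\Delta_V$, and $\tilde\Delta_H(\pi^*u)=\pi^*(\Delta_{\omega_\infty}u)$ by the same degree count. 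A slightly more direct route, avoiding the polarization bookkeeping altogether, is to apply Lemma \ref{lem1} to the $(1,1)$-form $\alpha=\sqrt{-1}\,(\pa\varphi\wedge\bar\pa\eta+\pa\eta\wedge\bar\pa\varphi)$ (whose $\Lambda_{\omega_k}$-trace is the inner product in question, with the paper's normalization): since $d(\pi^*\eta)$ is horizontal and $\omega_g$ has no mixed component, $\alpha_V=0$, so the $k^0$ term $\tilde\Lambda_{\omega_g}\alpha_V$ vanishes and the expansion starts at $k^{-1}\Lambda_\omega\alpha_H$, which when $\varphi$ is also pulled back equals $\pi^*\langle\nabla_{\omega_\infty}\varphi,\nabla_{\omega_\infty}\eta\rangle_{\omega_\infty}$. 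Either way the content is the same; your flagged normalization check is the only point of care and it does work out consistently with the conventions of Definition \ref{def2}.
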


Before we give the proof of proposition, we explain how to find $\varphi_{k,2}$ and $b_{k,2}$.
We can write $$S(\omega_{k})=r(r-1)+S_{1}k^{-1}+S_{2}k^{-2}+\dots.$$ Note that Corollary \ref{cor3} implies that $S(\omega_{k})$ is invariant under the action of $T$. Thus, $S_{i} \in \mathcal C^\infty(\PE)^T$,
where $S_{1}=S_{1}(\omega_{\infty},h_{\textrm{HE}})$. For any smooth function $\varphi $ on $\PE$, we have $$S(\omega_{k}+k^{-1}\ddbar \varphi)= r(r-1)+(S_{1}+\triangle_{V}(\triangle_{V}-r)\varphi)k^{-1}+O(k^{-2}),$$
\begin{align*}S(\omega_{k}+k^{-2}\ddbar \varphi)
= r(r-1)+S_{1}k^{-1}+(S_{2}+\triangle_{V}(\triangle_{V}-r)\varphi)k^{-2}+O(k^{-3}).
\end{align*} Hence for $\eta \in \mathcal C^\infty(M)$, $\Phi \in \Gamma(M,E)$ and $\varphi \in \mathcal C^\infty(\PE)$, we have
\begin{align*}
&S(\omega_{k}+\ddbar \eta +k^{-1}\ddbar \Phi+ k^{-2}\ddbar \varphi)\\
&= r(r-1)+S_{1}k^{-1}+(S_{2}+S_{1,1}(\eta, \Phi)+\triangle_{V}(\triangle_{V}-r)\varphi)k^{-2}+O(k^{-3}).
\end{align*}
Therefore \begin{align*}S(&\omega_{k}+\ddbar \eta+ k^{-1}\ddbar \Phi+k^{-2}\ddbar \varphi)-l(r(r-1)+k^{-1}S(\omega)+k^{-2}b_{2})\\&=k^{-2}\Big( S_{2}+\triangle_{V}(\triangle_{V}-r)\varphi+S_{1,1}(\eta, \Phi)-b_{2}-\Theta_{s}\Big)+O(k^{-3})\end{align*}
for some smooth function $\Theta_s$.
On the other hand
\begin{align*}
&\langle \nabla l(r(r-1)+k^{-1}S(\omega_{\infty})+k^{-2}b_{2}), \nabla ( \eta+ k^{-1} \Phi+k^{-2} \varphi_{2})\rangle\\
&=k^{-2}\langle  \nabla_{\omega_{\infty}}S(\omega_{\infty}),  \nabla_{\omega_{\infty}} \eta \rangle_{\omega_{\infty}}+O(k^{-3}).
\end{align*} Now we can find $\varphi_{2} \in \mathcal C^\infty(\PE)^T$ such that $$\triangle_{V}(\triangle_{V}-r)\varphi_{2}-b_{2}-\Theta_{s} \in \mathcal C^\infty(M)\oplus\Gamma(M,W).$$ Lemma \ref{lem6} implies that $\Theta_{s}$ is invariant under the action of $T$. Applying Lemma \ref{lem4} implies that
there exist $\eta_{2} \in \mathcal C^\infty(M)^T$ and $\phi_{2} \in \Gamma(M,W)^T$ and $b_{2}\in $such that $$S_{1,1}(\eta_{2}, \Phi_{2}) +\frac{1}{2} \langle  \nabla_{\omega_{\infty}}S(\omega_{\infty}),  \nabla_{\omega_{\infty}} \eta_{2} \rangle_{\omega_{\infty}}=\triangle_{V}(\triangle_{V}-r)\varphi_{2}-b_{2}-\Theta_{s}.$$ Hence $$S(\omega_{k})-l(r(r-1)+k^{-1}S(\omega))=O(k^{-2}),$$$$S(\omega_{k}+\ddbar \varphi_{k,2})+\frac 12\langle\nabla l(b_{k,2}), \nabla (  \varphi_{k,2})\rangle-l(b_{k,2})=O(k^{-3}),$$ where $\varphi_{k,2}=\eta_{2}+ k^{-1}  \Phi_{2}+k^{-2} \varphi_{2}$ and $b_{k,2}=r(r-1)+k^{-1}S(\omega)+k^{-2}b_{2}$. Note that $\varphi_{k,2} \in \mathcal C^\infty(\PE)^T$, since $\eta_{2}, \phi_{2}$ and $\varphi_{2}$ are invariant under the action of $T$. 



\begin{proof}[Proof of Proposition \ref{prop4}]

We prove it by induction on $p$. Suppose that we have chosen $\eta_{2},\dots \eta_{p-1} \in \mathcal C^\infty(M)^T,$ $\Phi_{2},\dots \Phi_{p-1} \in \Gamma(M,W)^T$, $\varphi_{2},\dots \varphi_{p-1} \in \mathcal C^\infty(\PE)^T$ and $b_{0}, \dots b_{p-1} \in $ such that $$S(\omega_{k}+\ddbar \varphi_{k,p-1})+\langle\nabla l(b_{k,p-1}), \nabla \varphi_{k,p-1}\rangle-l(b_{k,p-1})=k^{-p}\epsilon_{p}+O(k^{-p-1}).$$ We have

\begin{align*}S(&\omega_{k,p-1}+k^{-p+2}\ddbar\eta_{p}+ k^{-p+1} \ddbar \Phi_{p}+k^{-p}\ddbar \varphi_{p} )\\&=S(\omega_{k,p-1})+k^{-p}(\triangle_{V}(\triangle_{V}-r)\varphi_{p}+S_{1,1}(\eta_{p}, \Phi_{p}))+O(k^{-p-1}).\end{align*}
On the other hand,
\begin{align*}&\langle\nabla  l(b_{k,p-1}+k^{-p}b_{p}), \nabla (\varphi_{k,p-1}+k^{-p+2}\eta_{p}+ k^{-p+1}  \Phi_{p}+k^{-p} \varphi_{p})\rangle-l(b_{k,p-1}+k^{-p}b_{p})\\&=\langle\nabla l(b_{k,p-1}), \nabla \varphi_{k,p-1}\rangle-l(b_{k,p-1})+k^{-p}(\langle  \nabla_{\omega_{\infty}}S(\omega_{\infty}),  \nabla_{\omega_{\infty}} \eta \rangle_{\omega_{\infty}}-b_{p})+O(k^{-p-1}).\end{align*}

Corollary implies that we there exist $\eta_{p} \in \mathcal C^\infty(M)^T,\Phi_{p} \in \Gamma(M,W)^T, \varphi_{p} \in \mathcal C^\infty(\PE)^T$ and $b_{p} \in $ such that $$\triangle_{V}(\triangle_{V}-r)\varphi_{p}+S_{1,1}(\eta_{p}, \Phi_{p})+\frac{1}{2} \langle  \nabla_{\omega_{\infty}}S(\omega_{\infty}),  \nabla_{\omega_{\infty}} \eta \rangle_{\omega_{\infty}}-b_{p}-\epsilon_{p}=\textrm{Constant}.$$ This concludes the proof.





\end{proof}

\begin{definition}\label{def5}

Define $\omega_{k,p}=\omega_{k}+ \ddbar \varphi_{k,p}$. For any positive integer $p$ and any $b \in \bar{\Lie{g}}$, we define $l_{k,p}(b)=l_{k}(b)-\frac{1}{2}\langle\nabla l_{k}(b), \nabla \varphi_{k,p}\rangle_{\omega_{k,p}}$.

\end{definition}

The following lemma is straightforward.

\begin{lem}\label{lem7}
Let $b \in \bar{\Lie{g}}_{E}$ and  $X$ be the holomorphic vector fields on $M$ such that $db=\iota_{X}\omega$. Suppose that $\tilde{X}$ is the holomorphic lift of $X$ to $\PE$.
Then $k^{-1}d l_{k,p}= \iota_{\tilde{X}}\omega_{k,p}$.

\end{lem}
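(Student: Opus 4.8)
\textbf{Proof proposal for Lemma \ref{lem7}.}

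The plan is to reduce the identity $k^{-1}d\,l_{k,p}(b) = \iota_{\tilde X}\omega_{k,p}$ to the already-established identity $k^{-1}d\,l_k(b) = \iota_{\tilde X}\omega_k$ from Lemma \ref{lem3}, together with the fact that $\tilde X$ is a genuine holomorphic vector field on $\PE$ and hence its flow preserves the de Rham cohomology class and acts nicely on $\ddbar$-exact forms. Recall that by Definition \ref{def5}, $\omega_{k,p} = \omega_k + \ddbar\varphi_{k,p}$ and $l_{k,p}(b) = l_k(b) - \tfrac12\langle \nabla l_k(b), \nabla\varphi_{k,p}\rangle_{\omega_{k,p}}$, so the claim is equivalent to
\[
k^{-1}d\,l_k(b) - \tfrac{k^{-1}}{2}\,d\langle \nabla l_k(b),\nabla\varphi_{k,p}\rangle_{\omega_{k,p}} = \iota_{\tilde X}\omega_k + \iota_{\tilde X}\ddbar\varphi_{k,p}.
\]
Subtracting the Lemma \ref{lem3} identity (in the form $k^{-1}d\,l_k(b) = \iota_{\tilde X}\omega_k$, which holds because $X_s$, and hence any $b\in\bar{\Lie g}_E$ with liftable $X$, lifts), what remains to prove is
\[
\iota_{\tilde X}\ddbar\varphi_{k,p} = -\tfrac{k^{-1}}{2}\,d\langle \nabla l_k(b),\nabla\varphi_{k,p}\rangle_{\omega_{k,p}}.
\]

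The key step is the standard moment-map computation for Hamiltonian Killing-type vector fields: if $\tilde X = \tilde X_1 + \overline{\tilde X_1}$ with $\tilde X_1$ the $(1,0)$-part, and $\tilde X$ is holomorphic with $k^{-1}d\,l_k(b) = \iota_{\tilde X}\omega_k$ (so $l_k(b)$ is a Hamiltonian for $\tilde X$ with respect to $\omega_k$, hence also with respect to $\omega_{k,p}$ since $\omega_{k,p}-\omega_k$ is $\ddbar$-exact and $\tilde X$-invariant), then for any smooth function $\psi$ on $\PE$ one has the identity
\[
\iota_{\tilde X}\ddbar\psi = -\tfrac{k^{-1}}{2}\, d\langle \nabla l_k(b), \nabla\psi\rangle_{\omega_{k,p}}
\]
up to the harmless observation that $\tilde X_1(\psi)$ and its conjugate assemble into a gradient pairing. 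Concretely, I would write $\iota_{\tilde X}\ddbar\psi = \sqrt{-1}\,\pa(\tilde X_1 \lrcorner\,\bar\pa\psi) - \sqrt{-1}\,\bar\pa(\overline{\tilde X_1}\lrcorner\,\pa\psi)$ using $\bar\pa\tilde X_1 = 0$ and Cartan's formula, recognize $\tilde X_1\lrcorner\bar\pa\psi$ (paired via $\omega_{k,p}$) as proportional to $\langle \bar\pa l_k(b), \bar\pa\psi\rangle$, and conjugate; the holomorphicity of $\tilde X$ and the fact that $l_k(b)$ is real then collapse everything to $-\tfrac{k^{-1}}{2}\,d\langle\nabla l_k(b),\nabla\psi\rangle_{\omega_{k,p}}$. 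Applying this with $\psi = \varphi_{k,p}$ gives exactly the required identity.

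The main obstacle — really the only place needing care — is bookkeeping the constants and the $k$-powers: the $k^{-1}$ normalization in $d\,l_k(b) = k\,\iota_{\tilde X}\omega_k$ conventions used throughout (compare the paragraph before Proposition \ref{prop3} and Lemma \ref{lem3}, where $kd(l_k(b)) = \iota_{\tilde X}\omega_k$), and the factor $\tfrac12$ in the definition of $l_{k,p}$, must be tracked so that they match the $\tfrac12\langle\nabla\cdot,\nabla\cdot\rangle$ convention in Definition \ref{def5}. I would also need to justify that $l_k(b)$ remains a Hamiltonian for $\tilde X$ with respect to $\omega_{k,p}$, not just $\omega_k$: since $\omega_{k,p} - \omega_k = \ddbar\varphi_{k,p}$ and $\varphi_{k,p}\in\mathcal C^\infty(\PE)^T$ so $\mathcal L_{\tilde X}\varphi_{k,p}=0$ (as $\tilde X$ lies in the torus direction by hypothesis), we get $\iota_{\tilde X}\ddbar\varphi_{k,p} = d(\iota_{\tilde X}\sqrt{-1}\,\bar\pa\pa\varphi_{k,p})$ is exact and its potential is absorbed precisely into the correction term defining $l_{k,p}(b)$. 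Once these normalizations are pinned down the lemma follows by direct substitution, which is why it is labeled "straightforward."
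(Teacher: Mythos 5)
The paper gives no proof of this lemma (it is dismissed as ``straightforward''), so there is nothing to compare your argument against; it has to stand on its own. Your overall strategy is the correct and standard one: subtract the Lemma \ref{lem3} identity, reduce to showing that $\iota_{\tilde X}\ddbar\varphi_{k,p}$ is exactly $d$ of the correction term in Definition \ref{def5}, and obtain that from the holomorphicity of $\tilde X_1$ via $\iota_{\tilde X_1}(\bar\partial\partial\psi)=\bar\partial(\tilde X_1\psi)$ together with a reality argument. Carried out correctly, this does prove the lemma (up to the paper's own typos in normalization).

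The gap is in the central identity you assert. Writing $\tilde X_1^j=k\,g_k^{j\bar m}\partial_{\bar m}l_k(b)$ (this is what Lemma \ref{lem3} gives: $k\,d\,l_k(b)=\iota_{\tilde X}\omega_k$, not $k^{-1}d\,l_k(b)=\iota_{\tilde X}\omega_k$ as you use it), one gets
$\tilde X_1(\psi)=\tfrac{k}{2}\langle\nabla l_k(b),\nabla\psi\rangle_{\omega_k}$, with the gradient and pairing taken with respect to $\omega_k$ — because $\tilde X_1$ \emph{is} the $(1,0)$-gradient of $k\,l_k(b)$ for the metric $\omega_k$, not for $\omega_{k,p}$. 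Pairing ``via $\omega_{k,p}$'' as you propose produces $(g_{k,p})^{j\bar m}\partial_{\bar m}l_k(b)\,\partial_j\psi$, which is a genuinely different function, so your displayed identity
$\iota_{\tilde X}\ddbar\psi=-\tfrac{k^{-1}}{2}d\langle\nabla l_k(b),\nabla\psi\rangle_{\omega_{k,p}}$ is false both in the power of $k$ and in the choice of metric. What the computation actually yields is
$\iota_{\tilde X}\omega_{k,p}=k\,d\bigl(l_k(b)-\tfrac12\langle\nabla l_k(b),\nabla\varphi_{k,p}\rangle_{\omega_k}\bigr)$; with the old potential $l_k(b)$ and the new metric $\omega_{k,p}$ simultaneously (as Definition \ref{def5} literally reads) the identity holds only modulo terms quadratic in $\nabla\varphi_{k,p}$. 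A complete proof must therefore either take the bracket with respect to $\omega_k$, or replace $\nabla l_k(b)$ by $\nabla l_{k,p}(b)$, and it should say so explicitly rather than absorb the discrepancy into ``bookkeeping.'' Three further points: the parenthetical claim that $l_k(b)$ ``remains a Hamiltonian with respect to $\omega_{k,p}$'' contradicts the very content of the lemma (the Hamiltonian changes by precisely the correction term); the term $\tilde X_1\,\raisebox{1pt}{$\lrcorner$}\,\bar\partial\psi$ in your Cartan computation is identically zero (a $(1,0)$-vector contracted with a $(0,1)$-form), so that display is scrambled and suggests the computation was not actually performed; and the reality of $\tilde X_1(\varphi_{k,p})$, which is what converts $\sqrt{-1}(\bar\partial-\partial)$ into $d$, requires $\tilde X(\varphi_{k,p})=0$, which the $T$-invariance of $\varphi_{k,p}$ guarantees only for $b\in\bar{\Lie{t}}$, not for arbitrary $b\in\bar{\Lie{g}}_E$ as in the hypothesis.
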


\begin{cor}\label{cor4}

We have $$S(\omega_{k,p})-l_{k,p}(b_{k,p})=O(k^{-p-1}).$$

\end{cor}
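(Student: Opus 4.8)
The plan is to read the statement off from Proposition~\ref{prop4} after unwinding Definition~\ref{def5}. Since $\omega_{k,p}=\omega_{k}+\ddbar\varphi_{k,p}$ and $l_{k,p}(b_{k,p})=l_{k}(b_{k,p})-\frac12\langle\nabla l_{k}(b_{k,p}),\nabla\varphi_{k,p}\rangle_{\omega_{k,p}}$, I would first write
\begin{align*}
S(\omega_{k,p})-l_{k,p}(b_{k,p})
&=\Big(S(\omega_{k}+\ddbar\varphi_{k,p})+\tfrac12\langle\nabla l_{k}(b_{k,p}),\nabla\varphi_{k,p}\rangle_{\omega_{k}}-l_{k}(b_{k,p})\Big)\\
&\quad+\tfrac12\Big(\langle\nabla l_{k}(b_{k,p}),\nabla\varphi_{k,p}\rangle_{\omega_{k,p}}-\langle\nabla l_{k}(b_{k,p}),\nabla\varphi_{k,p}\rangle_{\omega_{k}}\Big).
\end{align*}
By Proposition~\ref{prop4}, applied with the functions $\varphi_{k,p}$ and $b_{k,p}$ produced there, the first bracket is $O(k^{-p-1})$. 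So the whole problem reduces to showing that the discrepancy between the two inner products --- one computed with $\omega_{k,p}$, the other with $\omega_{k}$ --- is again $O(k^{-p-1})$.

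To estimate that discrepancy I would use three structural facts. First, $b_{0}=r(r-1)$ is a constant and $b_{k,p}=r(r-1)+k^{-1}S(\omega_{\infty})+O(k^{-2})$, so by Definition~\ref{def4} $l_{k}(b_{k,p})=r(r-1)+O(k^{-1})$; hence $\nabla l_{k}(b_{k,p})$ is of order $k^{-1}$, with horizontal leading part. Second, $\varphi_{k,p}=\eta_{2}+O(k^{-1})$ with $\eta_{2}\in\mathcal C^{\infty}(M)^{T}$, so $\nabla\varphi_{k,p}$ is of order $1$, again with horizontal leading part, and $\ddbar\varphi_{k,p}$ has leading term $\ddbar\eta_{2}=\pi^{*}(\sqrt{-1}\,\bar\partial\partial\eta_{2})$, which has no vertical or mixed component. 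Third, because the horizontal block of the inverse metric of $\omega_{k}$ is $O(k^{-1})$ while the vertical block is $O(1)$, and $\omega_{k,p}-\omega_{k}=\ddbar\varphi_{k,p}$ is $O(1)$, the corresponding blocks of the inverse metrics of $\omega_{k,p}$ and $\omega_{k}$ agree to leading order. I would then expand $(g_{\omega_{k,p}})^{-1}=(g_{\omega_{k}})^{-1}-(g_{\omega_{k}})^{-1}(\ddbar\varphi_{k,p})(g_{\omega_{k,p}})^{-1}$, decompose $\ddbar\varphi_{k,p}$, $\nabla l_{k}(b_{k,p})$ and $\nabla\varphi_{k,p}$ into horizontal, vertical and mixed parts exactly as in the proofs of Lemmas~\ref{lem1},~\ref{lem2} and~\ref{lem6}, and carry out the bookkeeping of $k$-orders term by term to see that the discrepancy is of lower order, and in fact $O(k^{-p-1})$, which would complete the proof.

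The hard part will be precisely this last order count: one must track, for each of $\ddbar\varphi_{k,p}$, $\nabla l_{k}(b_{k,p})$ and $\nabla\varphi_{k,p}$, which of the horizontal/vertical/mixed blocks it contributes through, together with the differing $k$-orders of the inverse metric on those blocks, in order to be sure the discrepancy never survives at order $k^{-p}$ or lower. An equivalent way to organize this is to re-run the inductive construction in the proof of Proposition~\ref{prop4} with all inner products computed with respect to $\omega_{k,p}$ rather than $\omega_{k}$, and to observe that at each order $k^{-q}$ the only operators that enter are $\Delta_{V}(\Delta_{V}-r)$, $S_{1,1}$ and the pairing $\langle\nabla_{\omega_{\infty}}S(\omega_{\infty}),\nabla_{\omega_{\infty}}\cdot\rangle_{\omega_{\infty}}$, all of which are unchanged by this replacement; so the same $\eta_{j},\Phi_{j},\varphi_{j},b_{j}$ still work, and that is exactly the assertion of the corollary.
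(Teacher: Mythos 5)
The paper offers no argument for this corollary at all: it is stated as an immediate consequence of Proposition~\ref{prop4} and Definition~\ref{def5}, with the two inner products tacitly identified. You are right to notice that they are \emph{not} literally identical --- Proposition~\ref{prop4} uses $\langle\cdot,\cdot\rangle_{\omega_{k}}$ while Definition~\ref{def5} uses $\langle\cdot,\cdot\rangle_{\omega_{k,p}}$ --- and your opening algebraic identity correctly reduces the corollary to bounding that discrepancy. That is the only possible content here, and you found it.

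The gap is in the estimate you then claim. Run your own bookkeeping: the leading term of $\nabla l_{k}(b_{k,p})$ is $k^{-1}\pi^{*}dS(\omega_{\infty})$ (horizontal, order $k^{-1}$), the leading term of $\nabla\varphi_{k,p}$ is $\pi^{*}d\eta_{2}$ (horizontal, order $1$), and the horizontal block of $(\omega_{k,p})^{-1}-(\omega_{k})^{-1}=-(\omega_{k})^{-1}(\ddbar\varphi_{k,p})(\omega_{k,p})^{-1}$ is of order $k^{-1}\cdot 1\cdot k^{-1}=k^{-2}$, with coefficient essentially $\ddbar\eta_{2}$. The product is therefore $O(k^{-3})$ with a generically nonvanishing coefficient, not $O(k^{-p-1})$; since the contraction argument in Section 5 needs $p\geq 6$, the discrepancy cannot simply be declared negligible. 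Your fallback --- re-running the induction of Proposition~\ref{prop4} with the other metric --- is the correct repair, but the assertion that ``the same $\eta_{j},\Phi_{j},\varphi_{j},b_{j}$ still work'' is false: changing the metric in the pairing changes the inhomogeneous error $\epsilon_{q}$ at every order $q\geq 3$, so the correction terms from that order on must be re-chosen. What saves the argument is that the operators one inverts at each stage ($\Delta_{V}(\Delta_{V}-r)$, $S_{1,1}$, and the leading pairing $\langle\nabla_{\omega_{\infty}}S(\omega_{\infty}),\nabla_{\omega_{\infty}}\cdot\rangle_{\omega_{\infty}}$, via Corollary~\ref{cor2}) are unaffected, so the induction closes with modified data and the corollary holds for the objects so constructed. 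In short: with either consistent choice of metric in both statements the corollary is a tautology; with the hybrid reading you adopt, the order count you propose does not close the gap, and the inductive construction itself must absorb the $O(k^{-3})$ discrepancy.
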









\section{Proof of Theorem \ref{thm2}}

The goal of this section is to prove Theorem \ref{thm2}.
We closely follow \cites{B, H3,Sz}. Before we give the proof, we go over some estimates from Hong and Br\"onnle.
Let's  fix a large positive integer $p$. In this section,  the operators $l=l_{k,p}$, $\mathcal{D}^*\mathcal{D}$ and  $\nabla$ and inner products are with respect to the metrics $\omega_{k,p}$.










\begin{prop}[c.f. \cite{B}]\label{prop6}
Let $L_4^2=H^{4,2}$ be the Sobolev space of functions whose up to 4-th derivatives are in $L^2$ and $(L_4^2)^T$ is the subspace of $T$-invariant functions.
\begin{enumerate}
\item 
Let $p$ be a fixed positive integer. There exists a constant $C$ independent of $k$ such that the operators $$G_{k,p}: (L^2_4)^T \times \bar{\Lie{k}} \to (L^2)^T,$$ $$G_{k}(\phi, b)= \mathcal{D}^*\mathcal{D} \phi - \frac{1}{2} \langle \nabla S(\omega_{k,p}), \nabla \phi\rangle-\frac{1}{2}\langle \nabla l_{k,p}(b_{k,p})), \nabla \phi \rangle -l_{k,p}(b)$$ has right sided inverses $P_{k}$ satisfying $||P_{k}||_{op} \leq Ck^3.$ Note that $G_{k,p}$ is the linearization of the extremal operator at $(\omega_{k,p}, b_{k,p})$.

\item There exists a constant $C$ independent of $k$ such that
\begin{align*}
&||Q_{k,p}(\phi,b)-Q_{k,p}(\psi, b^{\prime})||_{L^2}\\
& \leq C \max(||(\phi,b)||_{L^2_{4}}, ||(\psi,b^{\prime})||_{L^2_{4}})||(\phi,b)-(\psi,b^{\prime})||_{L^2_{4}},
\end{align*}
where
\[
Q_{k,p}(\phi,b)=S(\omega_{k,p}+\ddbar \phi)+\frac{1}{2}\langle \nabla l_{k,p}(b_{k,p}+b), \nabla \phi\rangle-l_{k,p}(b_{k,p}+b)-G_{k,p}\]
 is the nonlinear part of the extremal operator at $(\omega_{k,p}, b_{k,p})$.

\end{enumerate}

\end{prop}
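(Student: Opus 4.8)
The plan is to establish both parts by the now-standard analysis of the linearized extremal operator on a fibration (in the spirit of Hong and Br\"onnle), part (1) carrying the difficulty. For part (1), I would first make $G_{k,p}$ --- the linearization of the extremal operator at $(\omega_{k,p},b_{k,p})$ --- explicit: applying Proposition~\ref{prop0} to $(\PE,\omega_{k,p})$ together with Proposition~\ref{prop1}, the gradient asymptotics of Lemma~\ref{lem6}, and the description of $l_{k,p}$ in Lemma~\ref{lem3} and Definition~\ref{def4}, one finds
\[
G_{k,p}(\phi,b)=\Delta_V(\Delta_V-r)\phi-b+\big(\text{terms of lower order in }k\text{, acting on }\phi\text{ and }b\big),
\]
the lower-order error being a family of fourth-order operators whose coefficients, built from $\omega_{k,p}$, are bounded uniformly in $k$ after the customary rescaling of the base directions, and $l_{k,p}(b)=b+O(k^{-1})$. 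I would then invert $G_{k,p}$ blockwise, using the fibrewise spectral decomposition of $\Delta_V$: write
\[
(L^2_4)^T=V_M\oplus V_W\oplus V_0,
\]
where $V_M=\pi^*(\mathcal C^{\infty}(M)^T)$ (fibrewise constants), $V_W$ is the image of $\Gamma(M,\textup{End}_0(E))^T$ under $\Phi\mapsto\textup{Tr}(\lambda(h)\Phi)$ --- the first nonzero fibre eigenspace of $\Delta_V$, Definition~\ref{def3} --- and $V_0$ is the $L^2$-orthogonal complement of $V_M\oplus V_W$ inside each fibre.

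On $V_0$ the fibre eigenvalues of $\Delta_V$ stay bounded away from $0$ and $r$, so $\Delta_V(\Delta_V-r)$ is invertible there with bound independent of $k$, and $G_{k,p}|_{V_0}$ has an $O(1)$ right inverse. On $V_W$ the leading operator $\Delta_V(\Delta_V-r)$ vanishes identically; the next order is, by Lemma~\ref{lem4}, $k^{-1}$ times the linearized Hermitian--Einstein operator on $\Gamma(M,\textup{End}_0(E))^T$, which is an isomorphism because $E$ is Mumford stable and hence simple, so the right inverse on $V_W$ has size $O(k)$. On $V_M$ the vertical Laplacian annihilates everything, and --- after the cancellation (already built into the expansion underlying Proposition~\ref{prop4}) of the $O(k^{-1})$ contributions of the $S\Delta$ and Ricci terms of Proposition~\ref{prop0} --- the surviving operator is $k^{-2}$ times the extremal (Lichnerowicz-type) operator $\eta\mapsto\mathcal D^*\mathcal D\eta-\tfrac12\langle\nabla S(\omega_\infty),\nabla\eta\rangle$ of the base, whose kernel is $\bar{\Lie k}$; adjoining the variable $b\in\bar{\Lie k}$ and the term $-l_{k,p}(b)=-b+O(k^{-1})$ turns it into a surjection by the equivariant surjectivity of Corollary~\ref{cor2}, with a right inverse of size $O(k^2)$.

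Finally, the off-diagonal couplings between the three blocks are of strictly lower order in $k$; assembling the blocks by a triangular iteration --- solve on $V_M$, correct on $V_W$, then on $V_0$, and repeat --- converges and produces a right inverse $P_k$, and tracking the powers of $k$ through the block inverses and couplings gives the claimed polynomial bound $\|P_k\|_{op}\le Ck^3$. I expect this last assembly to be the main obstacle: the three directions carry genuinely different $k$-scalings ($O(1)$, $O(k)$, $O(k^2)$), and one must verify that no feedback loop through the couplings costs more than the stated total; the Mumford stability of $E$ and the extremality of $\omega_\infty$ (through Corollary~\ref{cor2}) enter precisely to secure the invertibility on $V_W$ and on $V_M$ respectively.

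For part (2), write $Q_{k,p}(\phi,b)=e_{k,p}+N_{k,p}(\phi)+R_{k,p}(\phi,b)$, where $e_{k,p}=S(\omega_{k,p})-l_{k,p}(b_{k,p})$ is a constant, which cancels in any difference, $N_{k,p}(\phi)=S(\omega_{k,p}+\ddbar\phi)-S(\omega_{k,p})-L_{k,p}\phi$ is the nonlinear remainder of the scalar curvature operator ($L_{k,p}$ being its linearization at $\omega_{k,p}$), and $R_{k,p}(\phi,b)=\tfrac12\langle\nabla l_{k,p}(b),\nabla\phi\rangle$ --- the gradient and inner product taken with respect to $\omega_{k,p}+\ddbar\phi$ --- which is bilinear in $(b,\phi)$ to leading order. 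In local coordinates $S(\omega_{k,p}+\ddbar\phi)$ is a universal rational expression in $\phi$ and its derivatives up to order four, with coefficients built from $\omega_{k,p}$ that, after the rescaling of the base, are bounded in $\mathcal C^{\ell}$ uniformly in $k$ for every $\ell$; hence $N_{k,p}$ vanishes to second order at $\phi=0$, and the mean-value identity
\[
N_{k,p}(\phi)-N_{k,p}(\psi)=\int_0^1\Big(L_{k,p}^{\,\omega_{k,p}+\ddbar(\psi+t(\phi-\psi))}-L_{k,p}^{\,\omega_{k,p}}\Big)(\phi-\psi)\,dt,
\]
combined with Sobolev multiplication in $(L^2_4)^T$, yields $\|N_{k,p}(\phi)-N_{k,p}(\psi)\|_{L^2}\le C\max(\|\phi\|_{L^2_4},\|\psi\|_{L^2_4})\,\|\phi-\psi\|_{L^2_4}$ with $C$ independent of $k$; the term $R_{k,p}$ is controlled by the same device, $l_{k,p}$ depending smoothly (indeed linearly) on $b$, cf.\ Definition~\ref{def4}. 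The only delicate point is the $k$-uniformity of $C$, which again rests on the uniform $\mathcal C^{\ell}$-control of $\omega_{k,p}$ and its covariant derivatives.
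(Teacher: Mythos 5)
The paper does not actually write out a proof of this proposition: it is quoted from Br\"onnle \cite{B} with only the remark that the same argument carries over to the present setting. Your reconstruction --- the right inverse assembled from the three-block spectral decomposition of $\Delta_V$ with the $O(1)$, $O(k)$, $O(k^2)$ scalings on the generic, $\textup{End}_0(E)$, and base blocks (invertibility on the middle block from simplicity of $E$, surjectivity on the base block from Corollary \ref{cor2}), together with the mean-value/Sobolev-multiplication argument for the quadratic estimate --- is precisely the argument of \cite{B} and Hong that the paper invokes, so it is correct and follows essentially the same route.
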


\begin{Rmk}

Our setting is slightly different from the setting in \cite{B}. In \cite{B}, Br\"onnle studied non-simple bundles over a base that does not admit nontrivial holomorphic vector field. However, the same proof as in  \cite{B} works in our setting.
\end{Rmk}
\begin{proof}[Proof of Theorem \ref{thm2} ]


We want to solve the following equation for $\phi \in \mathcal C^\infty(\PE)^T$ and $b \in \bar{\Lie{k}}$.

$$S(\omega_{k,p}+ \ddbar \phi)+\frac{1}{2}\langle \nabla l(b_{k,p}+b), \nabla \phi \rangle=l(b_{k,p}+b).$$
We can write it as the sum of linear and non-linear parts.
$$G_{k,p}(\phi,b)+Q_{k,p}(\phi,b)=0.$$ Then  in order to solve the equation, it suffices to solve the fixed point problem $$\mathcal{Q}_{k}(\phi, b)=(\phi, b),$$ where $\mathcal{Q}(\phi, b)=-P_{k}(Q_{k,p}(\phi,b))$.
We prove that the map $\mathcal{Q}$ is a contraction on the set $$B:=\{ (\phi, b)\in L^2_{4} \times \bar{\Lie{k}}\,\,\mid\, ||(\phi,b)||_{L^2_{4}}\leq 2C_{1}k^{-p+2} \}$$ for $p \geq 6$ and $k \gg 0$. First note that
\begin{align*}||\mathcal{Q}(0,0)||_{L^2}&=||P_{k}(Q_{k,p}(0,0))||_{L^2}\leq Ck^3||Q_{k,p}(0,0)||_{L^2}\\&=Ck^3||S(\omega_{k,p})-l_{k,p}(b_{k,p})||_{L^2}\leq C_{1}k^{-p+2}.\end{align*}
 Let $(\phi, b), (\phi^{\prime}, b^{\prime}) \in B$.  We have
 \begin{align*}||\mathcal{Q}(\phi, b)-\mathcal{Q}(\phi^{\prime}, b^{\prime})||_{L^2}& \leq ||P_{k}||_{op}||Q_{k,p}(\phi, b)-Q_{k,p}(\phi^{\prime}, b^{\prime})||_{L^2}\\& \leq Ck^3||Q_{k,p}(\phi, b)-Q_{k,p}(\phi^{\prime}, b^{\prime})||_{L^2}\\&\leq Ck^{5-p}||(\phi, b), (\phi^{\prime}- b^{\prime})||_{L_4^2}.\end{align*}
 Therefore, $$||\mathcal{Q}(\phi, b)-\mathcal{Q}(0,0)||_{L^2} \leq Ck^{5-p}||(\phi, b)||_{L^2}.$$ This implies that $$||\mathcal{Q}(\phi, b)||_{L^2}\leq ||\mathcal{Q}(0,0)||_{L^2}+Ck^{5-p}||(\phi, b)||_{L^2} \leq 2C_{1}k^{-p+2},$$ for $k\gg 0$ and $p \geq 6$. Hence $\mathcal{Q}(\phi, b): B\to B$ is a contraction for $k\gg 0$ and $p \geq 6$. Therefore, we can solve the equation for $\phi \in L^2_{4}$ and $b \in \bar{\Lie{k}}$. Now elliptic regularity implies that $\phi$ is smooth.




\end{proof}

An immediate consequence of Theorem \ref{thm2} is the following.

\begin{Cor}
Let $(M,L)$ be a compact polarized manifold and $\omega_{\infty} \in c_{1}(L) $ be an extremal K\"ahler metric. Let  $X_{s}$ be the gradient vector field of the scalar curvature of $\omega_{\infty}$, i.e. $d S(\omega_{\infty})= \iota_{X_{s}}\omega_{\infty}$. Let $E$ be a Mumford stable holomorphic vector bundle over $M$. Suppose that all holomorphic vector fields on $M$ can be lifted to  holomorphic vector fields on $\mathbb{P}E^*$. Then there exist extremal metrics on  $(\mathbb{P}E^*,\mathcal{O}_{\mathbb{P}E^*}(1)\otimes  L^k)$ for $k \gg 0$.

\end{Cor}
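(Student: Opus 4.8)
The plan is to derive this corollary as a direct special case of Theorem~\ref{thm2}, by observing that the liftability hypothesis forces the perturbation Hamiltonian $b$ produced by that theorem to already lie in $\bar{\Lie{t}}$, so that no stability-based deformation of the holomorphic structure on $E$ is needed. First I would note that if every Hamiltonian holomorphic vector field on $M$ lifts to $\PE$, then in particular $X_s$ lifts, and moreover $G_E = G$, hence one may take the maximal torus $T \subseteq G$ and $K = G$ (since $T$ is central in an abelian... more precisely, since all of $G_E = G$ lifts, the relevant subgroups coincide up to the usual reductions). The point is that $\bar{\Lie{k}}$ and $\bar{\Lie{t}}$ and the full space of lifted Hamiltonians all coincide in the regime we need, because every element has a holomorphic lift.

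The key steps, in order, are as follows. First, I would invoke Theorem~\ref{thm2} with a fixed $p \geq 6$: it produces $\phi \in \mathcal C^\infty(\PE)^T$ and $b \in \bar{\Lie{k}}$ solving
\[
S(\omega_{k,p} + \ddbar\phi) + \tfrac12\langle \nabla l_{k,p}(b), \nabla\phi\rangle = l_{k,p}(b),
\]
for all $k \gg 0$, together with the stated asymptotic expansion of $b$. Second, I would argue that under the present hypothesis $b$ can be taken in $\bar{\Lie{t}}$: indeed, because all holomorphic vector fields on $M$ lift to $\PE$, one has $G_E = G$; choosing $T$ a maximal torus of $G_E = G$, the centralizer $K$ of $T$ in $G$ contains $T$, and by the construction in Proposition~\ref{prop4} the coefficients $b_j$ lie in $\bar{\Lie{k}}$ — but since every vector field lifts, the lift $l_{k,p}$ is defined on all of $\bar{\Lie{g}}$ and the argument giving Theorem~\ref{thm2} can be run with $\bar{\Lie{k}}$ replaced by $\bar{\Lie{t}}$ once one checks the relevant surjectivity (Corollary~\ref{cor2}) holds with $T$-invariance; alternatively, one observes directly that since there is no obstruction to lifting, the torus reduction that normally separates $\bar{\Lie{t}}$ from $\bar{\Lie{k}}$ is vacuous. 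Third, with $b \in \bar{\Lie{t}}$, let $\tilde X$ be the holomorphic lift of the vector field $X$ with $db = \iota_X \omega_\infty$; by Lemma~\ref{lem7} we have $k^{-1} d\, l_{k,p}(b) = \iota_{\tilde X}\omega_{k,p}$. Fourth, setting $\Omega = \omega_{k,p} + \ddbar\phi = \omega_k + \ddbar(\varphi_{k,p} + \phi)$ and $f = l_{k,p}(b)$, the discussion following equation~\eqref{eq1} in the introduction shows that $\Omega$ is an extremal \ka metric: since $df = \iota(X')\Omega$ for the holomorphic vector field $X'$ corresponding to the lift, writing $X' = X_1' + \bar X_1'$ gives $\bar\pa S(\Omega) = \iota(X_1')\Omega$, i.e. the gradient of the scalar curvature of $\Omega$ is holomorphic. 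Since $\Omega \in c_1\big(\mathcal{O}_{\PE}(1)\otimes \pi^*L^k\big)$, this is the desired extremal metric.

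The main obstacle is step two: verifying that the Hamiltonian $b$ genuinely lands in $\bar{\Lie{t}}$ rather than merely in $\bar{\Lie{k}}$. In the general theorem the discrepancy between these two spaces is exactly what the relative stability assumption is used to kill, via an implicit function theorem argument perturbing the holomorphic structure on $E$. Here, the liftability hypothesis is supposed to make that discrepancy disappear automatically; the careful point is that when every Hamiltonian holomorphic vector field lifts, one has $G_E = G$ and the ``$K$ commuting with $T$'' construction degenerates so that the equation~\eqref{eq2} is already the genuine extremal equation~\eqref{eq1}. Concretely, I would trace through Proposition~\ref{prop4} and Corollary~\ref{cor2}: since $l_k$ extends to all of $\bar{\Lie g}$ (liftability), the inductive construction of $b_{k,p}$ can be carried out with $b_j \in \bar{\Lie t}$ throughout — the surjectivity statement needed is the $T$-equivariant one with target involving $\bar{\Lie t}$, which holds because every Hamiltonian vector field in $\Lie g$ lifts and hence the relevant cokernel is spanned by lifted Hamiltonians. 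Once this is in place, the rest is the soft argument above, and elliptic regularity (already invoked in the proof of Theorem~\ref{thm2}) guarantees the solution $\phi$ is smooth.
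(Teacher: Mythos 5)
Your route is the paper's own: the corollary is stated there as an immediate consequence of Theorem \ref{thm2}, and your proof is exactly that reduction --- show that under the liftability hypothesis the Hamiltonian $b \in \bar{\Lie{k}}$ produced by Theorem \ref{thm2} automatically lies in $\bar{\Lie{t}}$, so that equation \eqref{eq2} is already the genuine extremal equation and $\omega_{k,p}+\ddbar\phi$ is extremal by the computation following \eqref{eq1}. The one place you must tighten is the identification $\bar{\Lie{k}}=\bar{\Lie{t}}$, which is the entire content of the corollary beyond Theorem \ref{thm2}: neither of your two justifications delivers it (``$K=G$'' is not what happens, and ``the centralizer of $T$ contains $T$'' is true but insufficient), and your first alternative --- re-running Proposition \ref{prop4} with $\bar{\Lie{t}}$ in place of $\bar{\Lie{k}}$ --- merely relocates the same question into the surjectivity of Corollary \ref{cor2}. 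The clean argument is one line of Lie theory: the hypothesis gives $\Lie{g}_E=\Lie{g}$, so a maximal torus $T$ of $G_E$ is a maximal torus of $G$ itself, and the centralizer of a maximal torus in a compact Lie group has Lie algebra $\Lie{t}$ (a Cartan subalgebra of a compact Lie algebra is self-centralizing); hence $\Lie{k}=\Lie{t}$ and $\bar{\Lie{k}}=\bar{\Lie{t}}$ on the nose, with no perturbation of the holomorphic structure needed. You should also record why the hypothesis $X_s\in\Lie{t}$ of Theorem \ref{thm2} is satisfied, which your write-up skips: every element of $G$ preserves $\omega_\infty$ and hence $S(\omega_\infty)$, so $X_s$ is fixed by the adjoint action, i.e.\ central in $\Lie{g}=\Lie{g}_E$, and therefore contained in every maximal torus. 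With these two sentences supplied, your argument is complete and coincides with the intended one.
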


\section{Hong's moment map setting and proof of Theorem \ref{thm1}}
 In this section, we follow \cite{H4} to prove Theorem \ref{thm1}. As before, let $(M,\omega_{\infty})$ be a K\"ahler manifold of dimension $m$ and $G$ be the group of Hamiltonian isometries of $(M, \omega_{\infty})$. Note that the Lie algebra of $G$ is the space of Hamiltonian vector fields on $(M,\omega_{\infty})$. Define $$\mathcal{N}=\{ f \in \mathcal C^\infty(M)\mid \iota_{X}\omega_{\infty}=df \textrm{    for some  } X \in \Lie{g }\}= \textup{Ker} ( \mathcal{D}^*\mathcal{D}). $$


Any $\xi \in \Lie{g}$ defines a  holomorphic vector field $\xi^\#$ on $M$. For any $\xi \in \Lie{g}$, there exists a unique smooth function $f_{\xi} \in \mathcal C^\infty(M)$ such that \begin{equation}\label{eq6}\iota_{\xi^\#}\omega_{\infty}=df_{\xi}  \,\,\,\,\,\textrm{and}\,\,\,\,\, \int_{M}f_{\xi}\omega_{\infty}^m=0.\end{equation}

The following is a straightforward computation.

 \begin{prop}\label{prop7}

The map $\xi \in \Lie{g} \to f_{\xi}$ is an isomorphism of Lie algebras. Moreover, for any $g \in G$ and $\xi \in \Lie{g}$, we have $$f_{Ad(g)\xi}= f_{\xi}\circ \sigma_{g^{-1}},$$ where $\sigma_{g}: M \to M $ is defined by $\sigma_{g}(x)=g.x$.

 \end{prop}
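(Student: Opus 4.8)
The plan is to verify directly that $\xi\mapsto f_\xi$ is linear, injective, surjective onto $\mathcal N$, and a Lie algebra homomorphism, and then to check the equivariance under $G$. Linearity is immediate from the defining equation \eqref{eq6}: if $\iota_{\xi^\#}\omega_\infty=df_\xi$ and $\iota_{\eta^\#}\omega_\infty=df_\eta$ with both $f_\xi,f_\eta$ having zero integral, then $\iota_{(a\xi+b\eta)^\#}\omega_\infty=d(af_\xi+bf_\eta)$ and $af_\xi+bf_\eta$ again integrates to zero, so $f_{a\xi+b\eta}=af_\xi+bf_\eta$ by uniqueness. Injectivity follows because $df_\xi=0$ forces $f_\xi$ to be constant, hence identically zero by the normalization, hence $\iota_{\xi^\#}\omega_\infty=0$ and $\xi^\#=0$ since $\omega_\infty$ is nondegenerate; as $G$ acts effectively this gives $\xi=0$. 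Surjectivity onto $\mathcal N$ is exactly the definition of $\mathcal N$ together with the observation that for each such $f$ one can subtract off its average to meet the normalization.

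The substantive point is that the bracket is preserved. First I would recall the standard identity for symplectic (here \ka) manifolds: if $X_f,X_g$ denote the Hamiltonian vector fields of $f,g$ with the sign convention $\iota_{X_f}\omega_\infty=df$, then $[X_f,X_g]=-X_{\{f,g\}}$ (or $+X_{\{f,g\}}$, depending on the sign convention for the Poisson bracket $\{f,g\}=\omega_\infty(X_f,X_g)$), which is proved via Cartan's formula: $\iota_{[X_f,X_g]}\omega_\infty=\mathcal L_{X_f}\iota_{X_g}\omega_\infty-\iota_{X_g}\mathcal L_{X_f}\omega_\infty=d\,\iota_{X_f}dg=d(\omega_\infty(X_g,X_f))$, using that $X_f$ preserves $\omega_\infty$. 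Thus $f_{[\xi,\eta]}$ and $\{f_\xi,f_\eta\}$ have the same differential, and since $\{f_\xi,f_\eta\}=X_\eta f_\xi$ is a Lie derivative of a function along a Hamiltonian vector field it automatically has zero integral against $\omega_\infty^m$; by uniqueness $f_{[\xi,\eta]}=\{f_\xi,f_\eta\}$, which is the Lie algebra of functions under the Poisson bracket. This establishes the isomorphism of Lie algebras.

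For the equivariance, fix $g\in G$ and $\xi\in\Lie g$. The one-parameter subgroup generated by $\mathrm{Ad}(g)\xi$ is $t\mapsto g\exp(t\xi)g^{-1}$, so its generating vector field is $(\mathrm{Ad}(g)\xi)^\#=(\sigma_g)_*\xi^\#$. Pulling back the defining equation $\iota_{\xi^\#}\omega_\infty=df_\xi$ by $\sigma_{g^{-1}}$ and using $\sigma_{g^{-1}}^*\omega_\infty=\omega_\infty$ (since $g\in G$ is an isometry preserving $\omega_\infty$), we get $\iota_{(\mathrm{Ad}(g)\xi)^\#}\omega_\infty=d(f_\xi\circ\sigma_{g^{-1}})$; and $f_\xi\circ\sigma_{g^{-1}}$ still has vanishing integral because $\sigma_{g^{-1}}$ preserves $\omega_\infty^m$. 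By uniqueness $f_{\mathrm{Ad}(g)\xi}=f_\xi\circ\sigma_{g^{-1}}$, as claimed. The only mildly delicate bookkeeping is keeping the sign conventions for $\iota_X\omega$, the Poisson bracket, and the Lie bracket mutually consistent so that no stray minus sign appears in $f_{[\xi,\eta]}=\{f_\xi,f_\eta\}$; everything else is a routine application of Cartan's formula and the invariance of $\omega_\infty$ under $G$.
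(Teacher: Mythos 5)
Your proof is correct and is precisely the ``straightforward computation'' the paper omits: linearity and injectivity from the normalization in \eqref{eq6}, the bracket identity via Cartan's formula together with $\mathcal L_{\xi^\#}\omega_\infty=0$, and equivariance by pulling back $\iota_{\xi^\#}\omega_\infty=df_\xi$ under $\sigma_{g^{-1}}$ and invoking uniqueness. The only cosmetic point worth noting is that the image of $\xi\mapsto f_\xi$ is the zero-mean part of $\mathcal N$ (which is closed under the Poisson bracket, as you observe), rather than all of $\mathcal N$, which contains the constants.
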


 \begin{cor}\label{cor5}

 A function $f \in \mathcal{N}$ is in the center of $\mathcal{N}$ if and only if $f$ is $G$-invariant. Moreover, if $f \in \mathcal C^\infty(M)$ is a $G$-invariant function, then $\pi_{\mathcal{N}}(f)$ is in the center of $\mathcal{N}$, where $\pi_{\mathcal{N}}:\mathcal C^\infty(M) \to \mathcal{N}$ is the orthogonal projection.
 \end{cor}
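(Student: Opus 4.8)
The plan is to prove Corollary~\ref{cor5} as a direct consequence of Proposition~\ref{prop7}, which identifies $\mathcal N = \ker(\mathcal D^*\mathcal D)$ with the Lie algebra $\Lie g$ (via $f \leftrightarrow \xi$) together with the equivariance $f_{\mathrm{Ad}(g)\xi} = f_\xi \circ \sigma_{g^{-1}}$. First I would note that under this isomorphism, a function $f = f_\xi$ is central in $\mathcal N$ exactly when $\xi$ is central in $\Lie g$, and (since $G$ is connected — being the group of Hamiltonian isometries) $\xi$ is central in $\Lie g$ if and only if $\mathrm{Ad}(g)\xi = \xi$ for all $g \in G$. By the equivariance formula, $\mathrm{Ad}(g)\xi = \xi$ for all $g$ is equivalent to $f_\xi \circ \sigma_{g^{-1}} = f_\xi$ for all $g$, i.e., to $f_\xi$ being $G$-invariant. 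This proves the first assertion.

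For the second assertion, I would take $f \in \mathcal C^\infty(M)$ that is $G$-invariant and show $\pi_{\mathcal N}(f)$ is central in $\mathcal N$, using the first part. It suffices to show $\pi_{\mathcal N}(f)$ is itself $G$-invariant. The key point is that the orthogonal projection $\pi_{\mathcal N} : \mathcal C^\infty(M) \to \mathcal N$ is $G$-equivariant: the $L^2$ inner product on $\mathcal C^\infty(M)$ (taken with respect to $\omega_\infty$) is preserved by the $G$-action $f \mapsto f \circ \sigma_{g^{-1}}$ because $g$ is an isometry of $\omega_\infty$, and the subspace $\mathcal N = \ker(\mathcal D^*\mathcal D)$ is $G$-invariant — indeed $\mathcal D^*\mathcal D$ is built from $\omega_\infty$ and hence commutes with the isometric $G$-action, so its kernel is preserved. (Alternatively, $\mathcal N$ is $G$-invariant directly from Proposition~\ref{prop7}, since $\mathrm{Ad}(g)$ maps $\Lie g$ to itself.) An orthogonal projection onto a $G$-invariant subspace, along a $G$-invariant inner product, is automatically $G$-equivariant. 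Therefore, for $g \in G$, $\pi_{\mathcal N}(f) \circ \sigma_{g^{-1}} = \pi_{\mathcal N}(f \circ \sigma_{g^{-1}}) = \pi_{\mathcal N}(f)$, so $\pi_{\mathcal N}(f)$ is $G$-invariant, hence central in $\mathcal N$ by the first part.

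I do not expect any serious obstacle here; the statement is essentially a bookkeeping consequence of Proposition~\ref{prop7} plus the observation that everything in sight ($L^2$ pairing, the operator $\mathcal D^*\mathcal D$, the subspace $\mathcal N$) is canonically attached to $\omega_\infty$ and therefore $G$-equivariant. The one small subtlety worth spelling out is the use of connectedness of $G$ to pass between "central in the Lie algebra" and "fixed by all of $\mathrm{Ad}(G)$"; since $G = \mathrm{Ham}(M,\omega_\infty)$ is connected this is harmless, and in any case one only ever needs the identity component for the center computation. The mild care needed in identifying the normalization (the functions in $\mathcal N$ and the $f_\xi$ of~\eqref{eq6} differ only by constants, which are $G$-invariant and central) does not affect anything.
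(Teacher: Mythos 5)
Your proof is correct and follows exactly the route the paper intends: the paper offers no separate argument for Corollary~\ref{cor5}, treating it as an immediate consequence of the equivariance formula $f_{\mathrm{Ad}(g)\xi}=f_\xi\circ\sigma_{g^{-1}}$ in Proposition~\ref{prop7}, and your write-up simply makes explicit the identification of the center with the $\mathrm{Ad}(G)$-fixed vectors and the $G$-equivariance of the orthogonal projection $\pi_{\mathcal N}$. The two small points you flag (connectedness of $G$ and the normalization by constants) are handled appropriately and do not affect the argument.
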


 Let $Y$ be a K\"ahler manifold (open or compact without boundary). Suppose that the Lie algebra $\Lie{g}$ acts on $Y$. Then $[\xi_{1}^{\sharp}, \xi_{2}^{\sharp}]=[\xi_{1}, \xi_{2}]^{\sharp}$ for all $\xi_{1}, \xi_{2} \in \Lie{g}$. Integrating the action of $\Lie{g}$, we obtain an action of $G$ (an open neighborhood of identity in $G$) on $Y$. Therefore, there exists an equivariant moment map $\mu^Y:Y \to \Lie{g}$. Compose $\mu^Y$ with the map $\xi \in \Lie{g} \to f_{\xi}$ defined by \eqref{eq6}, we have an equivariant moment map $\mu^Y:Y \to \mathcal{N}.$ We apply this setting to the case when $Y$ is the smooth locus of moduli space of Hermitian-Einstein connections $\mathcal{M}$ on a smooth complex vector bundle $\mathcal{E}$ and the action of $G$ on $Y$. More precisely, let $\mathcal{E}$ be a smooth complex vector bundle of rank $r$ on $M$ and $h$ be a fixed hermitian metric on $\mathcal{E}$. We fix a holomorphic structure on $\det(\mathcal{E})$. Let $\mathcal{G}$ be the group of unitary gauge transformations of $(\mathcal{E}, h).$ Let $\mathcal{A}$ be the space of Hermitian-Einstien connections $A$ on $\mathcal{E}$ such that $(\mathcal{E}, \bar{\partial}_{A})$ is a simple holomorphic vector bundle and $A$ induces the fixed holomorphic structure on $\det(\mathcal{E})$ modulo the action of the unitary gauge group $\det(\mathcal{E})$.

 Now we define the moduli space of simple Hermitian-Einstein metrics on $\mathcal{E}$ as follows: $$\mathcal{M}=\frac{\mathcal{A}}{\mathcal{G}}.$$ One can compute the tangent space to $\mathcal{A}$ and the moduli space $\mathcal{M}$ (c.f. \cite{K}):
for any $A \in \mathcal{A}$, we have $$T_{A}\mathcal{A}=\{ \alpha \in \Omega^{1}(M, \textup{End}(\mathcal{E},h))\mid \nabla^{A}\alpha\in \Omega^{1,1}  \,\,\,\, \textrm{and}  \,\,\, \Lambda \nabla^{A}\alpha=0         \}.$$ Moreover if $[A] \in \mathcal{M}$ is a smooth point of $\mathcal{M},$ then $$T_{[A]}\mathcal{M}=\frac{\{ \alpha \in \Omega^{1}(M, \textup{End}(\mathcal{E},h))\mid  \nabla^{A}\alpha\in \Omega^{1,1}  \,\,\,\, \textrm{and}  \,\,\, \Lambda \nabla^{A}\alpha=0         \}}{\{\nabla_{A} s\mid  s \in \Gamma(M, \textup{End}(\mathcal{E},h))\}}.$$ Note that the moduli space $\mathcal{M}$ is not smooth in general. However, one can define an action of $\Lie{g}$ on $\mathcal{A}$ as follows:
Proposition \ref{prop3} tells that for any $A \in \mathcal{A}$ and any $X \in \Lie{g}$, there exists a unique $u_{X} \in \Gamma(M, \textup{End}(\mathcal{E}))$, depending on $A$, such that  $$\Lambda \partial_{A}(\bar{\partial}_{A}u_{X}-\iota_{X}F_{A})=0   \,\,\,\,\textrm{and} \,\,\,\,\, \int_{M} \textup{tr}(u	_{X})\omega^m=0. $$ For any $A \in \mathcal{A}$ and $X \in \Lie{g}$, define $$\theta_{X}(A)= -(-\partial_{A} g_{X}^*+\bar{\partial}_{A}g_{X}-\iota_{X}F_{A}) \in T_{A}\mathcal{A}. $$ Note that the vector field $\theta_{X}$ is the infinitesimal vector field on $\mathcal{A}$ induced by the action of $X$. Hong proved that the vector field $\theta_{X}$ can be descended to the moduli space $\mathcal{M}$. Moreover, he proved that $$[\theta_{X},\theta_{Y}]-\theta_{[X,Y]} \in d_{A}\Gamma(M,\textup{End}(\mathcal{E})).$$ This implies that on the moduli space $\mathcal{M}$, we have $[\theta_{X},\theta_{Y}]=\theta_{[X,Y]}$. Therefore we have an action of the Lie algebra $\Lie{g}$ on $\mathcal{M}$.










\begin{prop}(\cite{H4})\label{prop8}
The map $\mu^e:\mathcal{M} \to \mathcal{N}$ given by $$\mu^{e}([A])= \pi_{\mathcal{N}}\big( \Lambda^2 \textup{tr}(iF_{A} \wedge iF_{A})\big)$$ is an equivariant moment map for the action of $G$ on $\mathcal{M}$.

\end{prop}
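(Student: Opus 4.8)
The plan is to verify directly that the stated formula defines an equivariant moment map for the $G$-action on the moduli space $\mathcal{M}$. Recall that an equivariant moment map $\mu^e:\mathcal{M}\to\mathcal{N}$ must satisfy two properties: first, for every $\xi\in\Lie{g}$ and every variation $\alpha\in T_{[A]}\mathcal{M}$,
\[
d\langle\mu^e,\xi\rangle(\alpha)=\Omega_{\mathcal{M}}(\theta_\xi,\alpha),
\]
where $\Omega_{\mathcal{M}}$ is the natural symplectic form on $\mathcal{M}$ coming from the $L^2$-pairing on $\Omega^1(M,\textup{End}(\mathcal{E},h))$; and second, $\mu^e$ is $G$-equivariant, $\mu^e(g\cdot[A])=\mathrm{Ad}^*(g)\,\mu^e([A])$, which under the identification $\Lie{g}\cong\mathcal{N}$ of Proposition~\ref{prop7} becomes $\mu^e(g\cdot[A])=\mu^e([A])\circ\sigma_{g^{-1}}$. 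Both computations are essentially those of Hong~\cite{H4}, adapted to the present setting where $M$ carries an extremal rather than cscK metric; I would reproduce them, emphasizing that nothing in the moment-map identity uses the scalar curvature of $\omega_\infty$, only the symplectic structure of $M$ and the Hermitian-Einstein condition.

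First I would make precise the pairing $\langle\mu^e([A]),\xi\rangle$. Using~\eqref{eq6} and the identification $\xi\mapsto f_\xi$, this is $\int_M f_\xi\,\Lambda^2\textup{tr}(iF_A\wedge iF_A)\,\omega_\infty^m$ up to the normalization built into Definition~\ref{def1}; since the projection $\pi_{\mathcal{N}}$ is $L^2$-orthogonal and $f_\xi\in\mathcal{N}$, inserting $\pi_{\mathcal{N}}$ changes nothing in this pairing. Next I would compute the variation of $\textup{tr}(iF_A\wedge iF_A)$ along a tangent vector $\alpha\in T_A\mathcal{A}$: one gets $2\,\textup{tr}(iF_A\wedge i\nabla^A\alpha)$, which after integration by parts against $f_\xi$ and the Bianchi identity $\nabla^A F_A=0$ transfers the derivative onto $f_\xi$. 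The key algebraic input is the description of the infinitesimal action $\theta_\xi$ via Proposition~\ref{prop3}: the vector field $\theta_X(A)=-(-\pa_A g_X^*+\bar\pa_A g_X-\iota_X F_A)$, and the defining property $\Lambda\pa_A(\bar\pa_A u_X-\iota_X F_A)=0$ together with $df_\xi=\iota_{\xi^\#}\omega_\infty$, is exactly what is needed to match the boundary term $\int_M df_\xi\wedge(\cdots)$ against $\Omega_{\mathcal{M}}(\theta_\xi,\alpha)=\int_M\textup{tr}(\theta_\xi\wedge *\alpha)$-type expression. I would carry this through, being careful about the orthogonal splitting of $T_A\mathcal{A}$ into the tangent to the gauge orbit and its complement, since $\theta_\xi$ is only well-defined on $\mathcal{M}$ modulo $d_A\Gamma(M,\textup{End}(\mathcal{E}))$.

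For the equivariance, I would use Proposition~\ref{prop7}: the assignment $\xi\mapsto f_\xi$ intertwines $\mathrm{Ad}(g)$ on $\Lie{g}$ with precomposition by $\sigma_{g^{-1}}$ on $\mathcal{N}$, and the $G$-action on $\mathcal{M}$ is by pullback of connections, under which $F_{g\cdot A}=\sigma_g^*F_A$ (as a gauge-equivalence class), so $\Lambda^2\textup{tr}(iF_{g\cdot A}\wedge iF_{g\cdot A})=\sigma_g^*\big(\Lambda^2\textup{tr}(iF_A\wedge iF_A)\big)$ because $\sigma_g$ is an isometry of $\omega_\infty$; applying $\pi_{\mathcal{N}}$, which commutes with the $G$-action by Corollary~\ref{cor5} and isometry-invariance of the $L^2$-inner product, gives the claimed equivariance. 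The main obstacle I anticipate is the bookkeeping in the first (infinitesimal) identity: one must handle the fact that $\mathcal{M}$ is singular in general, so the computation is really taking place on $\mathcal{A}$ and then descending, and one must verify that all terms involving $g_X$ and $g_X^*$ either cancel or integrate to zero against a Hamiltonian potential, which is precisely where the Hermitian-Einstein equation $\Lambda_{\omega_\infty}F_A=\mu I_{\mathcal{E}}$ and the normalization $\int_M\textup{tr}(u_X)\omega_\infty^m=0$ enter. This is routine but delicate; the conceptual content is entirely contained in Hong's argument in~\cite{H4}, and I would signal that explicitly rather than re-deriving every identity.
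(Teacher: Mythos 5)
The paper gives no proof of this proposition---it is quoted directly from Hong \cite{H4}---and your outline is precisely the standard direct verification being deferred to there: the infinitesimal identity via the variation $\delta\,\textup{tr}(iF_A\wedge iF_A)=2\,\textup{tr}(i\nabla^A\alpha\wedge iF_A)$, integration by parts against $f_\xi$ using the Bianchi identity, and matching with the infinitesimal action $\theta_\xi$ from Proposition \ref{prop3}, plus equivariance from Proposition \ref{prop7} and isometry-invariance of $\Lambda^2$ and $\pi_{\mathcal N}$. Your plan is therefore consistent with the paper's treatment; the one point to sharpen before the bookkeeping can actually be carried out is the symplectic form on $\mathcal M$, which is not the symmetric $L^2$-pairing or a Hodge-star expression but rather $\Omega(\alpha,\beta)=\int_M\textup{tr}(\alpha\wedge\beta)\wedge\omega_\infty^{m-1}/(m-1)!$.
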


By the definition of $\mathcal{A}$, any connection $A \in \mathcal{A}$ induces the fixed holomorphic structure on $\det(\mathcal{E})$ up to the action of the gauge group. Therefore the function $$\frac{2}{r} \Lambda^2 (\textup{Ric}(\omega)\wedge \textup{tr}(iF_{A}))-\frac{2}{r(r+1)} \Lambda^2 (\textup{tr}(iF_{A})\wedge \textup{tr}(iF_{A}))-\mu S(\omega)+\frac{\textup{tr}(u_{X_{s}})}{r}$$ is independent of the choice of $A \in \mathcal{A}$.
We define the moment map $\mu:\mathcal{M} \to \mathcal{N}$ as follows:
\begin{align*}
&\qquad
\mu([A])=\mu^{e}([A])\\
&+\pi_{\mathcal{N}}\Big( \frac{2}{r} \Lambda^2 (\textup{Ric}(\omega)\wedge \textup{tr}(iF_{A}))-\frac{2}{r(r+1)} \Lambda^2 (\textup{tr}(iF_{A})\wedge \textup{tr}(iF_{A}))-\mu S(\omega)+\frac{\textup{tr}(u_{X_{s}})}{r}\Big). 
\end{align*}

\begin{lem}\label{lem8}

The moment map $\mu$ is equivarent if $c_{1}(L) =\lambda c_{1}(E)$ for some constant $\lambda \in \mathbb{Z}$.

\end{lem}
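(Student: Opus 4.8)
The plan is to verify the two defining properties of a moment map --- equivariance and the moment map equation $d\langle\mu,\xi\rangle = \iota_{\theta_\xi}\Omega$ --- for the candidate map $\mu$ on the moduli space $\mathcal{M}$, under the hypothesis $c_1(L) = \lambda c_1(E)$. Since $\mu$ is written as a sum $\mu = \mu^e + \mu'$, where $\mu'([A]) = \pi_{\mathcal N}\big(\tfrac{2}{r}\Lambda^2(\Ric(\omega)\wedge\textup{tr}(iF_A)) - \tfrac{2}{r(r+1)}\Lambda^2(\textup{tr}(iF_A)\wedge\textup{tr}(iF_A)) - \mu S(\omega) + \tfrac{\textup{tr}(u_{X_s})}{r}\big)$, and since $\mu^e$ is already known to be an equivariant moment map by Proposition \ref{prop8}, it suffices to analyze the correction term $\mu'$ and show it, too, is $G$-equivariant (the moment-map equation for the full $\mu$ is not what Lemma \ref{lem8} asserts --- only equivariance is claimed --- so I would focus solely on equivariance here).

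First I would recall that $G = \textup{Ham}(M,\omega_\infty)$ acts on $\mathcal{M}$ through the infinitesimal vector fields $\theta_X$ of Hong, and acts on $\mathcal{N} = \ker(\mathcal{D}^*\mathcal{D})$ via the adjoint-type action $f \mapsto f\circ\sigma_{g^{-1}}$ from Proposition \ref{prop7}; equivariance of $\mu$ means $\mu(g\cdot[A]) = \mu([A])\circ\sigma_{g^{-1}}$ for all $g\in G$, or infinitesimally $\theta_X(\mu) = \{f_X, \mu\}$ in the appropriate sense. The key observations are: (i) the projection $\pi_{\mathcal N}$ is $G$-equivariant (this follows from $G$ acting by isometries of $\omega_\infty$, hence commuting with $\mathcal{D}^*\mathcal{D}$, combined with Corollary \ref{cor5}); (ii) each of the individual density-valued expressions inside $\pi_{\mathcal N}$ transforms correctly under $g\in G$ --- namely, $\Lambda^2(\Ric(\omega_\infty)\wedge\textup{tr}(iF_A))$, $\Lambda^2(\textup{tr}(iF_A)\wedge\textup{tr}(iF_A))$, and $S(\omega_\infty)$ all pull back along $\sigma_g$ to the corresponding expression for the $g$-translated connection, because $g$ preserves $\omega_\infty$ and acts naturally on connections; and (iii) the term $\textup{tr}(u_{X_s})$ behaves correctly --- this is where $X_s$ being the gradient field of $S(\omega_\infty)$, hence $G$-invariant in the sense that it lies in the center after projection (Corollary \ref{cor5}), plays a role, together with the naturality of the construction of $u_X$ in Proposition \ref{prop3} under gauge and diffeomorphism symmetry.

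The role of the hypothesis $c_1(L) = \lambda c_1(E)$ with $\lambda\in\mathbb Z$ is, I expect, precisely to make the cohomological quantity $\mu\cdot S(\omega)$ (with $\mu$ here the slope constant, an unfortunate clash of notation) and the mixed Chern--Weil terms consistent: it guarantees that $\textup{tr}(iF_A)$ is cohomologous to $\lambda\omega_\infty$ up to an exact form, so that the Ricci-curvature cross term and the slope normalization combine into a genuinely $G$-equivariant density rather than one that only closes up to a $G$-translate of a topologically constrained correction. Concretely I would substitute $\textup{tr}(iF_A) = \lambda\omega_\infty + i\bar\pa\pa\psi_A$ for some function $\psi_A$ (depending equivariantly on $A$) and check that the resulting expression, after applying $\pi_{\mathcal N}$, is manifestly natural under $\sigma_g$.

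The main obstacle will be the bookkeeping around the term $\tfrac{\textup{tr}(u_{X_s})}{r}$ and its interaction with the projection $\pi_{\mathcal N}$: one must confirm that $u_{X_s}$, which is defined by the elliptic system in Proposition \ref{prop3} relative to a \emph{fixed} reference metric $\omega_\infty$ and depends on the connection $A$, transforms under the $G$-action on $\mathcal{M}$ exactly as dictated by equivariance --- i.e.\ that $u_{X_s}$ for the translated connection equals the pullback of $u_{X_s}$ composed appropriately with the isometry, using $X_s$'s invariance under the isometry group. The other potential subtlety is verifying that $\pi_{\mathcal N}$ genuinely commutes with the $G$-action on the full space $\mathcal C^\infty(M)$ rather than just on $\mathcal N$, which I would settle by noting $\sigma_g^*$ preserves the $L^2$ inner product (as $g$ is an isometry) and the operator $\mathcal{D}^*\mathcal{D}$, hence preserves both $\mathcal N$ and its orthogonal complement. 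Once these naturality checks are in place, equivariance of $\mu$ follows by assembling the pieces, and the hypothesis $c_1(L)=\lambda c_1(E)$ is exactly what removes the remaining obstruction in the cross terms.
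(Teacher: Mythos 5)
Your proposal does not contain the step that actually carries the content of this lemma, and its overall framing misreads where the difficulty lies. The correction term added to $\mu^e$ is, as the paper notes just before defining $\mu$, \emph{independent of the point} $[A]\in\mathcal{M}$: every $A\in\mathcal{A}$ induces the fixed holomorphic structure on $\det(\mathcal{E})$ modulo gauge, so $\textup{tr}(iF_A)$, and hence the whole bracketed expression (including $\textup{tr}(u_{X_s})$, which is determined by $\textup{tr}(F_A)$ alone), is one fixed function on $M$. So $\mu=\mu^e+c$ for a \emph{constant} $c\in\mathcal{N}$, and equivariance is not a question of how the individual densities ``transform with the $g$-translated connection'' (your point (ii)); it is the question of whether that fixed function is $G$-invariant, equivalently (Corollary \ref{cor5}) whether its projection is central in $\mathcal{N}$. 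If your naturality checks worked as stated they would prove equivariance with no hypothesis at all, which cannot be right since $c_1(L)=\lambda c_1(E)$ is genuinely needed.

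The missing idea is the use of the Hermitian--Einstein condition to upgrade the cohomological relation to a pointwise identity. Writing $\lambda\,\textup{tr}(iF_A)-\omega_\infty=\ddbar\varphi$ and applying $\Lambda_{\omega_\infty}$, the Hermitian--Einstein condition gives $\Lambda_{\omega_\infty}\textup{tr}(iF_A)=r\mu$, a constant, so $\Delta\varphi=r\mu\lambda-m$ is constant, hence zero after integrating over $M$, hence $\varphi$ is constant and $\lambda\,\textup{tr}(iF_A)=\omega_\infty$ \emph{pointwise}. Only then is the correction a function of the $G$-invariant data $\omega_\infty$ alone, hence $G$-invariant, hence central by Corollary \ref{cor5}, and adding it to the equivariant $\mu^e$ preserves equivariance. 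Your plan stops at ``$\textup{tr}(iF_A)$ is cohomologous to a multiple of $\omega_\infty$'' and proposes to carry the exact term $\ddbar\psi_A$ through the computation; with a non-constant potential the expression is not $G$-invariant and the argument does not close. (The auxiliary checks you list, such as $\pi_{\mathcal{N}}$ commuting with isometries, are correct but peripheral.)
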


\begin{proof}
Since $c_{1}(L) =\lambda c_{1}(E),$ there exists a smooth function $\varphi$ on $M$ such that $\lambda \textup{tr}(iF_{A}))-\omega_{\infty}=\ddbar\varphi.$ Taking the trace with respect to $\omega_{\infty}$, we have $r\mu\lambda-m=\Delta \varphi,$ where $\mu$ is the slope of $E$. Using the Hermitian-Einstein condition, we obtain that $\varphi $ is constant and therefore $\lambda \textup{tr}(iF_{A}))=\omega_{\infty}.$ Therefore $$ \frac{1}{r} \Lambda^2 (\textup{Ric}(\omega_{\infty})\wedge \textup{tr}(iF_{A}))-\frac{1}{r(r+1)} \Lambda^2 (\textup{tr}(iF_{A})\wedge \textup{tr}(iF_{A}))-\mu S(\omega_{\infty})+\frac{\textup{tr}(u_{X_{s}})}{r}$$ is invariant under the action of $G$ since $\omega_{\infty}$ is invariant under the action of $G$.

\end{proof}
Following \cite{H4}, we define the following.
\begin{definition}\label{def6}

A holomorphic structure $A$ is called stable relative to the maximal torus $T$ if  there exists a connection $A_{\infty}$ in the orbit of $[A] \in \mathcal{M}$ such that $[A_{\infty}]$ is a smooth point of $\mathcal{M}$, $\mu([A_{\infty}])\in \Lie{t}$ and $$\frac{\partial \mu}{\partial A}([A_{\infty}]): T_{[A_{\infty}]} \to \frac{\bar{\Lie{k}} }{\bar{\Lie{t}} } $$ is surjective.

\end{definition}

\begin{rem}


The moment map $\mu$ is not equivariant in general. If  $c_{1}(L) =\lambda c_{1}(E)$, then the notion of relative stability defined above is a GIT notion of stability introduced by Sz{\'e}kelyhidi (\cite{Sz1}). However, it is not clear how the notion of stability defined above is related to a GIT notion of stability in general .


\end{rem}




\begin{proof}[Proof of Theorem \ref{thm1}]

Theorem \ref{thm2} implies that for any $A \in \mathcal{A} $, we can find $\phi^{A} \in \mathcal C^\infty(\PE_{A})$ and $b^{A} \in \bar{\Lie{k}}$ such that $$  S(\omega_{k,p}^{A}+ \ddbar \phi^{A})+\frac{1}{2}\langle \nabla l(b^{A}), \nabla \phi^{A} \rangle=l(b^{A}).$$ Moreover $b^{A} $ has the following expansion.$$b^{A}=r(r-1)+k^{-1}S(\omega)-k^{-2}\mu(A)+k^{-3}R^{A}.$$ One can easily see through the computation that $\omega_{k,p}^{A}$, $\phi^{A}$ and $b^{A}$ depend smoothly on $A$.
Suppose that $A_{\infty}$ is in the orbit of $E$ and $\mu(A_{\infty}) \in \Lie{t}$. Define $\Phi(A,t)=\mu(A)+tR^{A}.$
Then $\Phi(A_{\infty}, 0)\in \bar{\Lie{t}}$ and $p_{1}(\frac{\partial \Phi}{\partial A}(A_{\infty}, 0))$ is surjective, where $p_{1}:\bar{\Lie{k}} \to \frac{\bar{\Lie{k}} }{\bar{\Lie{t}} }$ is the quotient map.  Therefore applying the  implicit function theorem, we find $A_{t}$ for small $t$ such that $\mu(A_{t})+tR^{A}\in \bar{\Lie{t}}$ for $t$ small enough; hence for $k=t^{-1} \gg 0$, we have $b^{A_{t}}=r(r-1)+tS(\omega)-t^{2}\mu(A_{t})+t^{3}R^{A_{t}}=r(r-1)+tS(\omega) \in \bar{\Lie{t}}.$ This implies that $\omega_{k,p}^{A_{k^{-1}}}+ \ddbar \phi^{A_{k^{-1}}}$ are extremal metrics for $k \gg0$. Note that $A_{k^{-1}}$ are compatible with the holomorphic structure of $E$ since they are all in the orbit of $A_{\infty}.$

\end{proof}



An holomorphic vector bundle $\mathcal E$ is called projectively flat, if the curvature of $E$ is of the form $c\cdot\textup{Id}\otimes\omega$, where $c$ is a constant and $\omega$ is the \ka metric of the base manifold $(M,L)$. Assume that $(M,L)$ is an extremal \ka manifold. Then by our result, for $k\gg 0$, there are extremal \ka metrics in the classes $\mathcal{O}_{\mathbb{P}E^*(1)}\otimes \pi^*(L^k)$ on $\PE$.

\begin{bibdiv}

\begin{biblist}

\bib{B}{unpublished}{
author={T. A. Br\"onnle},
title={Extremal K�hler metrics on projectivised vector bundles},
note={  	arXiv:1301.7045 [math.DG]},}

\bib{BBS}{article}{
   author={Berman, Robert},
   author={Berndtsson, Bo},
   author={Sj{\"o}strand, Johannes},
   title={A direct approach to Bergman kernel asymptotics for positive line
   bundles},
   journal={Ark. Mat.},
   volume={46},
   date={2008},
   number={2},
   pages={197--217},
   issn={0004-2080},
   review={\MR{2430724 (2009k:58050)}},
   doi={10.1007/s11512-008-0077-x},
}

\bib{BD}{article}{
   author={Burns, D.},
   author={De Bartolomeis, P.},
   title={Stability of vector bundles and extremal metrics},
   journal={Invent. Math.},
   volume={92},
   date={1988},
   number={2},
   pages={403--407},
   issn={0020-9910},
   review={\MR{936089 (89d:53114)}},
   doi={10.1007/BF01404460},
}

\bib{C}{article}{
   author={Catlin, David},
   title={The Bergman kernel and a theorem of Tian},
   conference={
      title={Analysis and geometry in several complex variables},
      address={Katata},
      date={1997},
   },
   book={
      series={Trends Math.},
      publisher={Birkh\"auser Boston},
      place={Boston, MA},
   },
   date={1999},
   pages={1--23},
   review={\MR{1699887 (2000e:32001)}},
}

\bib{D1}{article}{
   author={Donaldson, S. K.},
   title={A new proof of a theorem of Narasimhan and Seshadri},
   journal={J. Differential Geom.},
   volume={18},
   date={1983},
   number={2},
   pages={269--277},
   issn={0022-040X},
   review={\MR{710055 (85a:32036)}},
}

\bib{D2}{article}{
   author={Donaldson, S. K.},
   title={Anti self-dual Yang-Mills connections over complex algebraic
   surfaces and stable vector bundles},
   journal={Proc. London Math. Soc. (3)},
   volume={50},
   date={1985},
   number={1},
   pages={1--26},
   issn={0024-6115},
   review={\MR{765366 (86h:58038)}},
   doi={10.1112/plms/s3-50.1.1},
}

\bib{D3}{article}{
   author={Donaldson, S. K.},
   title={Scalar curvature and projective embeddings. I},
   journal={J. Differential Geom.},
   volume={59},
   date={2001},
   number={3},
   pages={479--522},
   issn={0022-040X},
   review={\MR{1916953 (2003j:32030)}},
}

\bib{D4}{article}{
   author={Donaldson, S. K.},
   title={Scalar curvature and projective embeddings. II},
   journal={Q. J. Math.},
   volume={56},
   date={2005},
   number={3},
   pages={345--356},
   issn={0033-5606},
   review={\MR{2161248 (2006f:32033)}},
   doi={10.1093/qmath/hah044},
}

\bib{F}{article}{
   author={Fine, Joel},
   title={Constant scalar curvature K\"ahler metrics on fibred complex
   surfaces},
   journal={J. Differential Geom.},
   volume={68},
   date={2004},
   number={3},
   pages={397--432},
   issn={0022-040X},
   review={\MR{2144537 (2005m:32045)}},
}

\bib{H1}{article}{
   author={Hong, Ying-Ji},
   title={Ruled manifolds with constant Hermitian scalar curvature},
   journal={Math. Res. Lett.},
   volume={5},
   date={1998},
   number={5},
   pages={657--673},
   issn={1073-2780},
   review={\MR{1666868 (2000j:32039)}},
}

\bib{H2}{article}{
   author={Hong, Ying-Ji},
   title={Constant Hermitian scalar curvature equations on ruled manifolds},
   journal={J. Differential Geom.},
   volume={53},
   date={1999},
   number={3},
   pages={465--516},
   issn={0022-040X},
   review={\MR{1806068 (2001k:32041)}},
}

\bib{H3}{article}{
   author={Hong, Ying-Ji},
   title={Gauge-fixing constant scalar curvature equations on ruled
   manifolds and the Futaki invariants},
   journal={J. Differential Geom.},
   volume={60},
   date={2002},
   number={3},
   pages={389--453},
   issn={0022-040X},
   review={\MR{1950172 (2004a:53040)}},
}

\bib{H4}{article}{
   author={Hong, Ying-Ji},
   title={Stability and existence of critical K\"ahler metrics on ruled
   manifolds},
   journal={J. Math. Soc. Japan},
   volume={60},
   date={2008},
   number={1},
   pages={265--290},
   issn={0025-5645},
   review={\MR{2392011 (2008m:53086)}},
}

\bib{Ho}{book}{
   author={H{\"o}rmander, Lars},
   title={An introduction to complex analysis in several variables},
   series={North-Holland Mathematical Library},
   volume={7},
   edition={3},
   publisher={North-Holland Publishing Co.},
   place={Amsterdam},
   date={1990},
   pages={xii+254},
   isbn={0-444-88446-7},
   review={\MR{1045639 (91a:32001)}},
}

\bib{K}{book}{
   author={Kobayashi, Shoshichi},
   title={Differential geometry of complex vector bundles},
   series={Publications of the Mathematical Society of Japan},
   volume={15},
   note={Kan\c flex o Memorial Lectures, 5},
   publisher={Princeton University Press},
   place={Princeton, NJ},
   date={1987},
   pages={xii+305},
   isbn={0-691-08467-X},
   review={\MR{909698 (89e:53100)}},
}

\bib{L}{article}{
   author={Luo, Huazhang},
   title={Geometric criterion for Gieseker-Mumford stability of polarized
   manifolds},
   journal={J. Differential Geom.},
   volume={49},
   date={1998},
   number={3},
   pages={577--599},
   issn={0022-040X},
   review={\MR{1669716 (2001b:32035)}},
}

\bib{M}{article}{
   author={Morrison, Ian},
   title={Projective stability of ruled surfaces},
   journal={Invent. Math.},
   volume={56},
   date={1980},
   number={3},
   pages={269--304},
   issn={0020-9910},
   review={\MR{561975 (81c:14007)}},
   doi={10.1007/BF01390049},
}

\bib{PS1}{article}{
   author={Phong, D. H.},
   author={Sturm, Jacob},
   title={Stability, energy functionals, and K\"ahler-Einstein metrics},
   journal={Comm. Anal. Geom.},
   volume={11},
   date={2003},
   number={3},
   pages={565--597},
   issn={1019-8385},
   review={\MR{2015757 (2004k:32041)}},
}

\bib{PS2}{article}{
   author={Phong, D. H.},
   author={Sturm, Jacob},
   title={Scalar curvature, moment maps, and the Deligne pairing},
   journal={Amer. J. Math.},
   volume={126},
   date={2004},
   number={3},
   pages={693--712},
   issn={0002-9327},
   review={\MR{2058389 (2005b:53137)}},
}

\bib{RT}{article}{
   author={Ross, Julius},
   author={Thomas, Richard},
   title={An obstruction to the existence of constant scalar curvature
   K\"ahler metrics},
   journal={J. Differential Geom.},
   volume={72},
   date={2006},
   number={3},
   pages={429--466},
   issn={0022-040X},
   review={\MR{2219940 (2007c:32028)}},
}

\bib{Sz1}{article}{
   author={Sz{\'e}kelyhidi, G{\'a}bor},
   title={Extremal metrics and $K$-stability},
   journal={Bull. Lond. Math. Soc.},
   volume={39},
   date={2007},
   number={1},
   pages={76--84},
   issn={0024-6093},
   review={\MR{2303522 (2008c:32032)}},
   doi={10.1112/blms/bdl015},
}

\bib{Sz}{unpublished}{
author={G. Sz{\'e}kelyhidi},
title={ On blowing up extremal K�hler manifolds},
note={ arXiv:1010.5130v2 [math.DG]},}

\bib{UY}{article}{
   author={Uhlenbeck, K.},
   author={Yau, S.-T.},
   title={On the existence of Hermitian-Yang-Mills connections in stable
   vector bundles},
   note={Frontiers of the mathematical sciences: 1985 (New York, 1985)},
   journal={Comm. Pure Appl. Math.},
   volume={39},
   date={1986},
   number={S, suppl.},
   pages={S257--S293},
   issn={0010-3640},
   review={\MR{861491 (88i:58154)}},
   doi={10.1002/cpa.3160390714},
}

\bib{W1}{article}{
   author={Wang, Xiaowei},
   title={Balance point and stability of vector bundles over a projective
   manifold},
   journal={Math. Res. Lett.},
   volume={9},
   date={2002},
   number={2-3},
   pages={393--411},
   issn={1073-2780},
   review={\MR{1909652 (2004f:32034)}},
}

\bib{W2}{article}{
   author={Wang, Xiaowei},
   title={Canonical metrics on stable vector bundles},
   journal={Comm. Anal. Geom.},
   volume={13},
   date={2005},
   number={2},
   pages={253--285},
   issn={1019-8385},
   review={\MR{2154820 (2006b:32031)}},
}

\bib{Z}{article}{
   author={Zelditch, Steve},
   title={Szeg\H o kernels and a theorem of Tian},
   journal={Internat. Math. Res. Notices},
   date={1998},
   number={6},
   pages={317--331},
   issn={1073-7928},
   review={\MR{1616718 (99g:32055)}},
   doi={10.1155/S107379289800021X},
}

\bib{Zh}{article}{
   author={Zhang, Shouwu},
   title={Heights and reductions of semi-stable varieties},
   journal={Compositio Math.},
   volume={104},
   date={1996},
   number={1},
   pages={77--105},
   issn={0010-437X},
   review={\MR{1420712 (97m:14027)}},
}

\end{biblist}
\end{bibdiv}


\end{document}